\newtheorem*{rep@theorem}{\rep@title}
\newcommand{\newreptheorem}[2]{%
\newenvironment{rep#1}[1]{%
 \def\rep@title{#2 \ref{##1}}%
 \begin{rep@theorem}}%
 {\end{rep@theorem}}}
\newtheorem{thm}{Theorem}[section]
\newtheorem{cor}[thm]{Corollary}
\newtheorem{lem}[thm]{Lemma}
\newtheorem{prop}[thm]{Proposition}
\newtheorem{defn}[thm]{Definition}
\newtheorem{rem}[thm]{Remark}
\newtheorem*{cor*}{Corollary}
\newtheorem{theorem}{Theorem}
\title{$L^{1}$ metric geometry of potentials with prescribed singularities on compact Kähler manifolds}
\author{Antonio Trusiani\footnote{email: \href{mailto:antonio.trusiani91@gmail.com}{antonio.trusiani91@gmail.com}}}
\date{} 
\begin{document}
\maketitle
\begin{abstract}
Given $(X,\omega)$ compact Kähler manifold and $\psi\in\mathcal{M}^{+}\subset PSH(X,\omega)$ a \emph{model type envelope} with non-zero mass, i.e. a fixed potential determining a singularity type such that $\int_{X}(\omega+dd^{c}\psi)^{n}>0$, we prove that the $\psi-$relative finite energy class $\mathcal{E}^{1}(X,\omega,\psi)$ becomes a complete metric space if endowed with a distance $d$ which generalizes the well-known $d_{1}$ distance on the space of Kähler potentials.\\
Moreover, for $\mathcal{A}\subset \mathcal{M}^{+}$ totally ordered, we equip the set $X_{\mathcal{A}}:=\bigsqcup_{\psi\in\overline{\mathcal{A}}}\mathcal{E}^{1}(X,\omega,\psi)$ with a natural distance $d_{\mathcal{A}}$ which coincides with $d$ on $\mathcal{E}^{1}(X,\omega,\psi)$ for any $\psi\in\overline{\mathcal{A}}$. We show that $\big(X_{\mathcal{A}},d_{\mathcal{A}}\big)$ is a complete metric space.\\
As a consequence, assuming $\psi_{k}\searrow \psi$ and $\psi_{k},\psi\in \mathcal{M}^{+}$, we also prove that $\big(\mathcal{E}^{1}(X,\omega,\psi_{k}),d\big)$ converges in a Gromov-Hausdorff sense to $\big(\mathcal{E}^{1}(X,\omega,\psi),d\big)$ and that there exists a direct system $\Big\langle\big(\mathcal{E}^{1}(X,\omega,\psi_{k}),d\big),P_{k,j}\Big\rangle$ in the category of metric spaces whose direct limit is dense into $\big(\mathcal{E}^{1}(X,\omega,\psi),d\big)$.
\end{abstract}
\vspace{10pt}

\textbf{Keywords:} Pluripotential Theory, Quasi-plurisubharmonic functions, Kähler manifolds, non-pluripolar product, complex Monge-Ampère equations.
\vspace{10pt}

\textbf{Mathematics Subject Classification:} 32U05 32Q15 53C55
\section{Introduction}
In the last forty years it has become important to understand the space of Mabuchi $\mathcal{H}$, i.e. the space of Kähler potentials in a fixed Kähler cohomology class $\{\omega\}\in H^{2}(X,\mathbbm{R})\cap H^{1,1}(X)$ for $(X,\omega)$ compact Kähler manifold of dimension $n$:
$$
\mathcal{H}:=\{\varphi\in\mathcal{C}^{\infty}\, :\, \omega+dd^{c}\varphi \, \mathrm{is}\, \mathrm{a}\, \mathrm{Kahler} \, \mathrm{form}\},
$$
where $d^{c}:=\frac{i}{2\pi}(\partial-\bar{\partial})$, so that $dd^{c}=\frac{i}{\pi}\partial\bar{\partial}$. By the pioneering papers \cite{Mab86}, \cite{Sem92} and \cite{Don99}  $\mathcal{H}$ can be endowed with a Riemannian structure given by the metric
$$
(f,g)_{\varphi}:=\Big(\int_{X}fg(\omega+dd^{c}\varphi)^{n}\Big)^{1/2}
$$
where $\varphi\in\mathcal{H}$, $f,g\in T_{\varphi}\mathcal{H}\simeq C^{\infty}(X)$ and the metric geodesic segments are solutions of homogeneous complex Monge-Ampère equations (see also \cite{Chen00}). Later Darvas introduced in \cite{Dar14} the Finsler metric $ |f|_{1,\varphi}:=\int_{X}|f|(\omega+dd^{c}\varphi)^{n} $ on $\mathcal{H}$ with associated distance $d_{1}$, that we will simply denote by $d$. The metric completion of $(\mathcal{H},d)$ has a pluripotential description (\cite{Dar14}) since it coincides with
$$
\mathcal{E}^{1}(X,\omega):=\big\{u\in PSH(X,\omega) \, :\,  E(u)>-\infty\big\}
$$
where $E(\cdot)$ is the \emph{Aubin-Mabuchi energy} defined as
$$
E(u):=\frac{1}{n+1}\sum_{j=0}^{n}\int_{X}u\omega^{j}\wedge(\omega+dd^{c}u)^{n-j}
$$
if $u$ is locally bounded and as $E(u):=\lim_{j\to\infty}E\big(\max(u,-j)\big)$ otherwise (see \cite{Mab86}, \cite{Aub84}, \cite{BB08} and \cite{BEGZ10}). Here for the wedge product among $(1,1)$-currents we mean the \emph{non-pluripolar product} (see \cite{BEGZ10}). Moreover the $d$-distance can be expressed as
$$
d(u,v):=E(u)+E(v)-2E\big(P_{\omega}(u,v)\big),
$$
where $P_{\omega}(u,v):=\sup\{w\in PSH(X,\omega)\, :\, w\leq \min(u,v)\}$ is the \emph{rooftop envelope} operator introduced in \cite{RWN14}. The complete geodesic metric space $\big(\mathcal{E}^{1}(X,\omega),d\big)$ turned out to be very useful to formulate in analytic terms and in some cases to solve important conjectures regarding the search of special metrics (see \cite{BBGZ09}, \cite{DR15}, \cite{BBEGZ16}, \cite{BDL16}, \cite{BBJ15}, \cite{DH17}, \cite{CC17}, \cite{CC18a}, \cite{CC18b}). Furthermore the metric topology is related to the continuity of the Monge-Ampère operator since it coincides with the so-called \emph{strong topology} (\cite{BBEGZ16}).\\

The space $\mathcal{E}^{1}(X,\omega)$ contains only potentials which are at most slightly singular (see \cite{DDNL17a}). Thus Darvas, Di Nezza are Lu introduced in \cite{DDNL17b} the analogous set $\mathcal{E}^{1}(X,\omega,\psi)$ with respect to a fixed $\omega$-psh function $\psi$. More precisely,
$$
\mathcal{E}^{1}(X,\omega,\psi):=\big\{u\in PSH(X,\omega) \, : \, u\leq \psi+C \, \mathrm{for}\, C\in\mathbbm{R}\,\, \mathrm{and}\,\, E_{\psi}(u)>-\infty\big\},
$$
where
$$
E_{\psi}(u):=\frac{1}{n+1}\sum_{j=0}^{n}\int_{X}(u-\psi)(\omega+dd^{c}\psi)^{j}\wedge (\omega+dd^{c}u)^{n-j}
$$
if $|u-\psi|$ is globally bounded and $E_{\psi}(u):=\lim_{j\to \infty}E_{\psi}\big(\max(u,	\psi-j)\big)$ otherwise. One of the reasons that leads them to investigate and develop the pluripotential theory of these sets was the search of solution with prescribed singularities $[\psi]$ for the complex Monge-Ampère equation $(\omega+dd^{c}u)^{n}=\mu$ (see also \cite{DDNL18b}). They found out that there is a necessary condition to assume on $\psi$: $\psi-P_{\omega}[\psi](0)$ must be globally bounded where $P_{\omega}[\psi](0):=\big(\lim_{C\to \infty}P_{\omega}(\psi_{j}+C,0)\big)^{*}$, (\cite{RWN14}, the star is for the upper semicontinuous regularization). So, without loss of generality, one may assume that $\psi$ is a \emph{model type envelope}, i.e. $\psi=P_{\omega}[\psi](0)$ (see section \ref{sec:Preli}). In this setting they were able to show the existence of Kähler-Einstein metric with prescribed singularities $[\psi]$ in the case of $X$ manifold with ample canonical bundle and in the case $X$ Calabi-Yau manifold.\\

Therefore one of the main motivations for this paper is to endow the set $\mathcal{E}^{1}(X,\omega,\psi)$ with a metric structure to address in a future work the problem of characterizing analytically the existence of Kähler-Einstein metrics with prescribed singularities in the Fano case.\\
Thus, assuming $\psi$ to be a model type envelope and defining
$$
d(u,v):=E_{\psi}(u)+E_{\psi}(v)-2E_{\psi}\big(P_{\omega}(u,v)\big)
$$
on $\mathcal{E}^{1}(X,\omega,\psi)\times\mathcal{E}^{1}(X,\omega,\psi)$, we prove the following theorem.
\begin{theorem}\footnote{The assumption on $\omega$ to be Kähler is unnecessary, i.e. this Theorem easily extends to the big case.}
\label{thmA}
Let $\psi\in PSH(X,\omega)$ be a model type envelope with non-zero mass $V_{\psi}=\int_{X}(\omega+dd^{c}\psi)^{n}>0$. Then $\big(\mathcal{E}^{1}(X,\omega,\psi),d\big)$ is a complete metric space.
\end{theorem}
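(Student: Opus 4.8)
The plan is to first establish the metric axioms through a short list of energy estimates, and then to deduce completeness by a monotone-convergence argument. The two computations I would record at the outset are the cocycle identity $E_{\psi}(u)-E_{\psi}(v)=\frac{1}{n+1}\sum_{j=0}^{n}\int_{X}(u-v)(\omega+dd^{c}u)^{j}\wedge(\omega+dd^{c}v)^{n-j}$ for comparable potentials, and its special case $d(u,v)=E_{\psi}(u)-E_{\psi}(v)$ whenever $v\leq u$ (for then $P_{\omega}(u,v)=v$). From monotonicity of $E_{\psi}$ and $P_{\omega}(u,v)\leq u,v$ one immediately gets $d\geq 0$ and the bound $d(u,v)\geq|E_{\psi}(u)-E_{\psi}(v)|$, so that $E_{\psi}$ is $d$-continuous; the same monotonicity yields the contraction $d(u,P_{\omega}(u,v))\leq d(u,v)$. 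Symmetry is immediate, and $d(u,v)=0\Rightarrow u=v$ follows because $d(u,P_{\omega}(u,v))\leq d(u,v)=0$ forces $E_{\psi}(u)=E_{\psi}(P_{\omega}(u,v))$ with $P_{\omega}(u,v)\leq u$, whence the domination principle (equal energy of comparable potentials forces equality) gives $u=P_{\omega}(u,v)\leq v$, and symmetrically $v\leq u$.

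The delicate axiom is the triangle inequality. Rearranging, $d(u,w)\leq d(u,v)+d(v,w)$ is equivalent to $E_{\psi}(P_{\omega}(u,v))+E_{\psi}(P_{\omega}(v,w))-E_{\psi}(v)\leq E_{\psi}(P_{\omega}(u,w))$. Since $\min(P_{\omega}(u,v),P_{\omega}(v,w))\leq\min(u,w)$ we have $P_{\omega}(u,w)\geq P_{\omega}(P_{\omega}(u,v),P_{\omega}(v,w))$, and monotonicity of $E_{\psi}$ reduces the claim to $E_{\psi}(P_{\omega}(a,b))\geq E_{\psi}(a)+E_{\psi}(b)-E_{\psi}(v)$ for $a,b\leq v$. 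Writing this as $d(v,a)\geq d(b,P_{\omega}(a,b))$ and using $P_{\omega}(v,b)=b$, it is exactly the one-variable Lipschitz property of the rooftop, $d(P_{\omega}(a,b),P_{\omega}(v,b))\leq d(a,v)$, which I would prove directly from the cocycle formula and the monotonicity of the non-pluripolar products under $a\leq v$.

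For completeness I would first prove the monotone-convergence lemmas: if $u_{k}\searrow u$ with $E_{\psi}(u)>-\infty$, or $u_{k}\nearrow u$ almost everywhere with $\sup_{k}E_{\psi}(u_{k})<\infty$, then $u\in\mathcal{E}^{1}(X,\omega,\psi)$ and $d(u_{k},u)\to 0$; both rest on the continuity of $E_{\psi}$ along monotone sequences together with the special-case identity $d=E_{\psi}(\cdot)-E_{\psi}(\cdot)$ for comparable arguments. Given a $d$-Cauchy sequence, I pass to a subsequence with $T_{k}:=\sum_{j\geq k}d(u_{j},u_{j+1})<\infty$ and set $w_{k}:=P_{\omega}(u_{k},u_{k+1},\dots)$, increasing in $k$ with $w_{k}\leq u_{k}$. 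Using the associativity $w_{k}=P_{\omega}(u_{k},w_{k+1})$, the contraction $d(a,P_{\omega}(a,b))\leq d(a,b)$, and the triangle inequality, a finite induction on the approximants $P_{\omega}(u_{k},\dots,u_{m})\searrow w_{k}$ gives $d(u_{k},w_{k})\leq T_{k}\to 0$. The $d$-continuity of $E_{\psi}$ then bounds $E_{\psi}(w_{k})$ between $\inf_{k}E_{\psi}(u_{k})-T_{1}$ and $\sup_{k}E_{\psi}(u_{k})$, so $w_{k}\nearrow w\in\mathcal{E}^{1}(X,\omega,\psi)$ with $d(w_{k},w)\to 0$; hence $d(u_{k},w)\leq d(u_{k},w_{k})+d(w_{k},w)\to 0$. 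As a Cauchy sequence with a convergent subsequence converges, this proves completeness.

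The main obstacle is not this formal metric algebra but the underlying relative pluripotential estimates, and it is there that the hypotheses are used. One must know that $P_{\omega}(u,v)$, the finite rooftops $P_{\omega}(u_{k},\dots,u_{m})$, and the envelopes $w_{k}$ all remain in the \emph{full-mass} class $\mathcal{E}^{1}(X,\omega,\psi)$: that taking rooftops preserves both the singularity type $[\psi]$ and the total mass $V_{\psi}$, and that $E_{\psi}$ and the non-pluripolar products stay continuous along the monotone sequences above, with no mass escaping to the unbounded locus of $\psi$. The condition that $\psi=P_{\omega}[\psi](0)$ be a model type envelope with $V_{\psi}>0$ is precisely what guarantees this stability, through the theory of $P_{\omega}[\psi]$ and the relative capacities; I would isolate these statements as preliminary lemmas and treat them as the technical core on which the whole argument rests.
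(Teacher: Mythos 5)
Your skeleton is the paper's own: nondegeneracy via the domination principle, the reduction of the triangle inequality to the one-variable rooftop contraction, and completeness via a summable subsequence, decreasing finite rooftops $P_{\omega}(u_{k},\dots,u_{m})$, and an increasing limit all match Section \ref{sec:ThmA}. But there is a genuine gap at the single hardest point: your claim that the contraction $d\big(P_{\omega}(a,b),P_{\omega}(v,b)\big)\leq d(a,v)$ for $a\leq v$ follows ``directly from the cocycle formula and the monotonicity of the non-pluripolar products.'' Carry that direct estimate out: since $P_{\omega}(a,b)\leq P_{\omega}(v,b)$, Proposition \ref{prop:4.10}$.(iii)$ and Lemma \ref{lem:PMass} give $d\big(P_{\omega}(a,b),P_{\omega}(v,b)\big)\leq \int_{X}\big(P_{\omega}(v,b)-P_{\omega}(a,b)\big)MA_{\omega}\big(P_{\omega}(a,b)\big)\leq \int_{\{P_{\omega}(a,b)=a\}}(v-a)\,MA_{\omega}(a)$ (the contact set $\{P_{\omega}(a,b)=b\}$ contributes nonpositively), while the same cocycle estimates bound $d(a,v)=E_{\psi}(v)-E_{\psi}(a)$ from below only by $\frac{1}{n+1}\int_{X}(v-a)\,MA_{\omega}(a)$; and no pointwise inequality $P_{\omega}(v,b)-P_{\omega}(a,b)\leq v-a$ is available to repair this termwise in the mixed-measure expansion. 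So your method yields Lipschitz constant $n+1$, not $1$, and with constant $n+1$ the rearranged triangle inequality does not follow. The missing idea is infinitesimal: one must differentiate $t\mapsto E_{\psi}\big(P_{\omega}((1-t)a+tv,b)\big)$, which is exactly the paper's Proposition \ref{prop:1b} (requiring uniform convergence of the envelopes, Theorem \ref{thm:conv}, and the plurifine locality exploited in Proposition \ref{prop:3b}); comparing the two derivative formulas along the segment makes the crude constant telescope to $1$, giving Proposition \ref{prop:3b} and then Corollary \ref{cor:4b}. (The Riemann-sum chain argument the paper later uses in Proposition \ref{prop:Dist} with Lemma \ref{lem:Contazzo} achieves the same telescoping with the same contact-set estimate you invoke, but as written it uses the triangle inequality for $d$ in the step $\tilde{d}\geq d$, so quoting it here would be circular.)

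On completeness your scheme is the paper's verbatim in outline, and you rightly defer the relative pluripotential lemmas ($P_{\omega}(u,v)\in\mathcal{E}^{1}(X,\omega,\psi)$, Proposition \ref{prop:Proj}, etc.) as the technical core. One caution, though: ``continuity of $E_{\psi}$ along monotone sequences'' is only standard in the decreasing direction (Proposition \ref{prop:Decr}); the increasing statement you invoke is itself Proposition \ref{prop:Incr}, whose proof is a genuine argument (a $j$-uniform tail estimate for the canonical approximants via the comparison principle applied to the auxiliary potentials $\frac{u_{j}+\psi-t}{2}$), not a citation from the preliminaries. Moreover, to know that your increasing envelopes $w_{k}$ have a well-defined $\omega$-psh limit you need the uniform bound $V_{\psi}\sup_{X}w_{k}\leq A\,d(\psi,w_{k})+B$ of Lemma \ref{lem:Bound}, and this is precisely where the model-type hypothesis enters, through $MA_{\omega}(\psi)\leq \mathbbm{1}_{\{\psi=0\}}MA_{\omega}(0)$ (Theorem \ref{thm:3.8}) together with $V_{\psi}>0$; bounded energy alone does not control $\sup_{X}w_{k}$. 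These deferrals are honest and consistent with the paper; the one step where your proposed method would actually fail is the contraction argument above.
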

The non-zero total mass $V_{\psi}>0$ condition is a necessary hypothesis because otherwise $d\equiv 0$ (Remark \ref{rem:ZeroMass}). \\

The second main motivation of the paper is to set up a new way to compare solutions of the complex Monge-Ampère equation $(\omega+dd^{c}u)^{n}=\mu$ on $\mathcal{E}^{1}(X,\omega,\psi)$ varying the model type envelope $\psi$ (see \cite{Tru20}). This leads to wonder, first of all, \emph{how} a sequence of spaces $\mathcal{E}^{1}(X,\omega,\psi_{k})$ \emph{converges} to $\mathcal{E}^{1}(X,\omega,\psi)$ if $\psi_{k}\to \psi$. The most interesting case seems to be when $\{\psi_{k}\}_{k\in\mathbbm{N}}$ is \emph{totally ordered} with respect to the natural partial order $\preccurlyeq$ on $PSH(X,\omega)$ given by $u\preccurlyeq v$ if $u\leq v+C$ for a constant $C\in\mathbbm{R}$.\\
Thus in the second part of the paper, denoting with $\mathcal{M}$ the set of all model type envelopes and with $\mathcal{M}^{+}$ its elements with non-zero mass, we assume to have a totally ordered subset $\mathcal{A}\subset \mathcal{M}^{+}$ and we define
$$
X_{\mathcal{A}}:=\bigsqcup_{\psi\in\mathcal{\overline{A}}}\mathcal{E}^{1}(X,\omega,\psi)
$$
where $\mathcal{\overline{A}}\subset \mathcal{M}$ is the closure of the set $\mathcal{A}$ as subset of $PSH(X,\omega)$ with its $L^{1}$-topology. Our next result regards the existence of a natural metric topology on $X_{\mathcal{A}}$ induced by a distance $d_{\mathcal{A}}$ which extends the distance $d$ over $\mathcal{E}^{1}(X,\omega,\psi)$ for any $\psi\in\overline{\mathcal{A}}$ (see section \ref{sec:5}). The construction of the metric $d_{\mathcal{A}}$ is quite technical, but the idea is to exploit the compactness of \textit{potentials with uniformly bounded entropy} and the interesting Proposition \ref{prop:Dist} to make a natural definition of a semimetric $\tilde{d}_{\mathcal{A}}$ (see Definition \ref{defn:Ref} and Proposition \ref{prop:Proper}) from which the metric $d_{\mathcal{A}}$ in obtained in a classical way (see page $17$). \newline
\begin{theorem}
\label{thmC}
Let $\mathcal{A}\subset \mathcal{M}^{+}$ totally ordered. Then $(X_{\mathcal{A}},d_{\mathcal{A}})$ is a complete metric space and $d_{\mathcal{A}}$ restricts to $d$ on $\mathcal{E}^{1}(X,\omega,\psi)\times \mathcal{E}^{1}(X,\omega,\psi)$ for any $\psi\in\mathcal{\overline{A}}$.
\end{theorem}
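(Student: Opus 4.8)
The plan is to build $d_{\mathcal{A}}$ out of the fibrewise distance $d$ and the canonical projections to more singular types, and to reduce every assertion to Theorem \ref{thmA} together with the $1$-Lipschitz behaviour of these projections. Since $\overline{\mathcal{A}}$ is totally ordered, given $u_{1}\in\mathcal{E}^{1}(X,\omega,\psi_{1})$ and $u_{2}\in\mathcal{E}^{1}(X,\omega,\psi_{2})$ I may assume $\psi_{1}\preccurlyeq\psi_{2}$, i.e. $\psi_{1}$ is the more singular type, and set
$$
d_{\mathcal{A}}(u_{1},u_{2}):=d\big(u_{1},P_{\omega}[\psi_{1}](u_{2})\big)+\delta(\psi_{1},\psi_{2}),
$$
where $P_{\omega}[\psi_{1}](u_{2})=\big(\lim_{C\to\infty}P_{\omega}(u_{2},\psi_{1}+C)\big)^{*}\in\mathcal{E}^{1}(X,\omega,\psi_{1})$ is the projection of the less singular potential onto the more singular class and $\delta$ is an intrinsic distance on the spine $\overline{\mathcal{A}}$ vanishing exactly on the diagonal. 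When $\psi_{1}\in\mathcal{M}\setminus\mathcal{M}^{+}$ the fibre collapses to the singleton $P_{\psi_{1}}$ and only the $\delta$-term survives. The restriction statement is then immediate: for $\psi_{1}=\psi_{2}=\psi$ the projection $P_{\omega}[\psi]$ is the identity on $\mathcal{E}^{1}(X,\omega,\psi)$ and $\delta(\psi,\psi)=0$, so $d_{\mathcal{A}}=d$ on every fibre.

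Next, the metric axioms. Symmetry is built into the definition because I always cap the less singular representative, so the right-hand side depends only on the unordered pair; positivity is clear since $d_{\mathcal{A}}(u_{1},u_{2})=0$ forces $\delta(\psi_{1},\psi_{2})=0$, hence $\psi_{1}=\psi_{2}$, and then $d(u_{1},u_{2})=0$, so $u_{1}=u_{2}$ by Theorem \ref{thmA}. The triangle inequality is the crux. I expect to prove it from three ingredients: (i) the projections are distance non-increasing, $d\big(P_{\omega}[\sigma](a),P_{\omega}[\sigma](b)\big)\le d(a,b)$ whenever $\sigma$ is more singular than the common type of $a,b$; (ii) the cocycle identity $P_{\omega}[\sigma]\circ P_{\omega}[\tau]=P_{\omega}[\sigma]$ for $\sigma\preccurlyeq\tau$; and (iii) the triangle inequality for $\delta$ on $\overline{\mathcal{A}}$. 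With these, one runs through the finitely many orderings of the three types $\sigma_{1},\sigma_{2},\sigma_{3}$ of $u_{1},u_{2},u_{3}$: after projecting to the most singular type occurring in each pair, the fibrewise triangle inequality of $d$ (from Theorem \ref{thmA}) combined with (i)--(ii) absorbs the horizontal contributions, while the $\delta$-terms add up by (iii).

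Finally, completeness. Let $(u_{k})$ be $d_{\mathcal{A}}$-Cauchy with $u_{k}\in\mathcal{E}^{1}(X,\omega,\psi_{k})$. Since the horizontal part of $d_{\mathcal{A}}$ is non-negative, $\delta(\psi_{k},\psi_{l})\le d_{\mathcal{A}}(u_{k},u_{l})$, so $(\psi_{k})$ is $\delta$-Cauchy; as $\overline{\mathcal{A}}$ is $L^{1}$-closed and totally ordered I may pass to a monotone subsequence and obtain a limit $\psi_{\infty}\in\overline{\mathcal{A}}$. If $\psi_{k}\searrow\psi_{\infty}$ (the limit is the more singular), each $u_{k}$ projects to $v_{k}:=P_{\omega}[\psi_{\infty}](u_{k})\in\mathcal{E}^{1}(X,\omega,\psi_{\infty})$; using (i) and $\delta(\psi_{k},\psi_{\infty})\to0$ the sequence $(v_{k})$ is $d$-Cauchy, hence it converges by Theorem \ref{thmA} to some $v_{\infty}$ when $\psi_{\infty}\in\mathcal{M}^{+}$, giving $d_{\mathcal{A}}(u_{k},v_{\infty})\to0$; if instead $\psi_{\infty}\in\mathcal{M}\setminus\mathcal{M}^{+}$ the target fibre is the singleton $P_{\psi_{\infty}}$ and the horizontal terms collapse, so $u_{k}\to P_{\psi_{\infty}}$.

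The genuinely delicate case is $\psi_{k}\nearrow\psi_{\infty}$, where the limit is less singular and the projections run the wrong way. Here I would instead fix $j$, form the coherent family $w^{(j)}_{\infty}:=\lim_{k\ge j}P_{\omega}[\psi_{j}](u_{k})\in\mathcal{E}^{1}(X,\omega,\psi_{j})$ (well defined by (i) and Theorem \ref{thmA}), check the compatibility $P_{\omega}[\psi_{i}](w^{(j)}_{\infty})=w^{(i)}_{\infty}$ for $i\le j$ via (ii), and reconstruct a candidate limit $u_{\infty}\in\mathcal{E}^{1}(X,\omega,\psi_{\infty})$ from this inverse system. The main obstacle is exactly this reconstruction: one must upgrade the coherent family to a single finite-energy potential, which requires the continuity of masses $V_{\psi_{j}}\nearrow V_{\psi_{\infty}}$ and uniform energy estimates guaranteeing $E_{\psi_{\infty}}(u_{\infty})>-\infty$, so that $u_{\infty}$ indeed lands in $\mathcal{E}^{1}(X,\omega,\psi_{\infty})$ and realizes the $d_{\mathcal{A}}$-limit.
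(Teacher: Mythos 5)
There is a genuine gap, and it sits at the crux you yourself identified: the triangle inequality fails for the proposed two-term formula $d_{\mathcal{A}}(u_{1},u_{2})=d\big(u_{1},P_{\omega}[\psi_{1}](u_{2})\big)+\delta(\psi_{1},\psi_{2})$, no matter which spine distance $\delta$ you choose. The problem is the ordering case you gloss over, where the \emph{middle} point lives in the most singular fibre: projection to a more singular type is contractive, so detours through more singular fibres shortcut your metric, and the contraction defect is unbounded, so no fixed $\delta$ can absorb it. Concretely, take $\psi_{2}\prec\psi_{1}$ in $\mathcal{A}$ with $V_{\psi_{2}}<V_{\psi_{1}}$, and set $u_{1}=\psi_{1}$, $u_{3}=\psi_{1}+t$, $u_{2}=\psi_{2}$. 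Since $P_{\omega}[\psi_{2}](\psi_{1})=\psi_{2}$ and $P_{\omega}[\psi_{2}](\psi_{1}+t)=\psi_{2}+t$, your formula gives
\begin{equation*}
d_{\mathcal{A}}(u_{1},u_{3})=tV_{\psi_{1}},\qquad d_{\mathcal{A}}(u_{1},u_{2})+d_{\mathcal{A}}(u_{2},u_{3})=tV_{\psi_{2}}+2\delta(\psi_{1},\psi_{2}),
\end{equation*}
and the right side is strictly smaller for $t$ large. Your ingredients (i)--(iii) only prove the triangle inequality when the three singularity types are monotone along the chain $u_{1},u_{2},u_{3}$; in the ``valley'' configuration the contraction inequality points the wrong way. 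This is precisely why the paper's $\tilde{d}_{\mathcal{A}}$ carries the extra term $\sup\big\{d(a,b)-d\big(P_{\omega}[\psi_{2}](a),P_{\omega}[\psi_{2}](b)\big)\big\}$, taken over entropy-bounded compact sets $\mathcal{P}_{\max(C_{1},C_{2})}(X,\omega,\psi_{1})$ depending on $u,v$ (to keep it finite --- over the whole fibre that supremum is $+\infty$, as the example above shows), and why even then $\tilde{d}_{\mathcal{A}}$ is not a distance: the paper must pass to the chain infimum $d_{\mathcal{A}}$ and prove the key Lemma \ref{lem:Keyy}, by induction on chain length, that chains through other fibres cannot shorten $d$ within a fibre. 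In your scheme the restriction statement is ``immediate''; in the paper it is exactly the content of that lemma and is the hardest step.

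Two further points. First, your completeness argument in the case $\psi_{k}\nearrow\psi_{\infty}$ is not a proof: you name the obstacle (upgrading the coherent family $w_{\infty}^{(j)}$ to a single potential in $\mathcal{E}^{1}(X,\omega,\psi_{\infty})$ with the required energy bound and $d_{\mathcal{A}}$-convergence) but do not overcome it. The paper handles this increasing case with rooftop envelopes $v_{j,k}:=P_{\omega}\big(P_{\omega}[\psi_{j}](\varphi_{j}),\dots,P_{\omega}[\psi_{j}](\varphi_{k})\big)$ of minimal entropy representatives, the coercivity bound of Lemma \ref{lem:Bound}, the monotone convergence results (Lemma \ref{lem:Key}, Theorem \ref{thm:conv}), and a final $3\epsilon$ argument resting on the uniform convergence of distances on entropy compacts (Proposition \ref{prop:UC}) together with the density of $\mathcal{P}_{\mathcal{H}}(X,\omega,\psi)$ (Lemma \ref{lem:ProDense}); none of this machinery appears in your proposal. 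Second, you never verify that the fibres over $\overline{\mathcal{A}}\setminus\mathcal{A}$ actually belong to the completion of the union over $\mathcal{A}$ (the paper's Step 2, again via Proposition \ref{prop:UC} and well-definedness of the embedding $\tilde{\Phi}$), so the identification of $X_{\mathcal{A}}$ as stated would also remain open even if the metric axioms held.
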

Here for $\psi\in \mathcal{M}\setminus \mathcal{M}^{+}$ we identify the set $\mathcal{E}^{1}(X,\omega,\psi)$ with a singleton $P_{\psi}$.\newline
Let me stress further that the distance $d_{\mathcal{A}}$ is a natural generalization of the distances $d$: in the companion paper \cite{Tru20} we show how its metric topology defines a strong topology which is heavily related to the continuity of the Monge-Ampère operator.\\

As a consequence of Theorem \ref{thmC}, considering a decreasing sequence $\{\psi_{k}\}_{k\in\mathbbm{N}}\subset \mathcal{M}^{+}$ converging to $\psi\in\mathcal{M}^{+}$, one immediately thinks that the metric spaces $\big(\mathcal{E}^{1}(X,\omega,\psi_{k}),d\big)$ essentially \emph{converges} to $\big(\mathcal{E}^{1}(X,\omega,\psi),d\big)$. The problem here is that these metric spaces are not locally compact, therefore it is not clear what kind of convergence one should look at. In section \ref{sec:5} we introduce the \emph{compact pointed Gromov-Hausdorff convergence} ($cp$-$GH$) which basically mimic the pointed Gromov-Hausdorff convergence (see \cite{BH99} and \cite{BBI01}) replacing, for any space, the family of balls centered at the point with an increasing family with dense union of compact sets containing the point chosen (see Definition \ref{defn:cp-GH}). 
\begin{theorem}
\label{thmD}
Let $\{\psi_{k}\}_{k\in\mathbbm{N}}\subset \mathcal{M}^{+}$ be a decreasing sequence converging to $\psi\in\mathcal{M}^{+}$. Then
$$
\Big(\mathcal{E}^{1}(X,\omega,\psi_{k}),\psi_{k},d\Big)\xrightarrow{cp-GH} \Big(\mathcal{E}^{1}(X,\omega,\psi),\psi,d\Big).
$$
\end{theorem}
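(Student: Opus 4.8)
The plan is to prove Theorem~\ref{thmD} by exhibiting, for each $k$, an explicit family of $cp$-$GH$ approximating data between $\big(\mathcal{E}^{1}(X,\omega,\psi_{k}),\psi_{k},d\big)$ and $\big(\mathcal{E}^{1}(X,\omega,\psi),\psi,d\big)$, and then verifying the defining estimates of Definition~\ref{defn:cp-GH}. Since $\psi_{k}\searrow\psi$ with all terms in $\mathcal{M}^{+}$, the totally ordered set $\mathcal{A}:=\{\psi_{k}\}_{k\in\mathbbm{N}}$ satisfies $\overline{\mathcal{A}}=\mathcal{A}\cup\{\psi\}$, so Theorem~\ref{thmC} furnishes the ambient complete metric space $\big(X_{\mathcal{A}},d_{\mathcal{A}}\big)$ in which every $\mathcal{E}^{1}(X,\omega,\psi_{k})$ and $\mathcal{E}^{1}(X,\omega,\psi)$ sits isometrically via $d$. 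This is the crucial structural reduction: the $cp$-$GH$ convergence should be read off from the single metric $d_{\mathcal{A}}$ rather than built by hand. The natural candidate maps are the transition maps $P_{k,j}$ of the direct system mentioned in the abstract together with the projection-type operators $P_{\psi}(\cdot):=P_{\omega}[\psi](\cdot)$ sending $u\in\mathcal{E}^{1}(X,\omega,\psi_{k})$ to its $\psi$-relative counterpart.

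First I would fix the exhausting families: for each space choose an increasing sequence of compact sets with dense union containing the marked point, the natural choice being sublevel-type sets such as $K^{k}_{R}:=\{u\in\mathcal{E}^{1}(X,\omega,\psi_{k}) : d(u,\psi_{k})\leq R,\ u\leq 0\}$, whose compactness follows from the completeness in Theorem~\ref{thmA} together with the energy-compactness already established in the $\mathcal{E}^{1}$-theory. Second, I would define the comparison map $T_{k}:\mathcal{E}^{1}(X,\omega,\psi_{k})\to\mathcal{E}^{1}(X,\omega,\psi)$ by $T_{k}(u):=P_{\omega}[\psi]\big(u\big)$, i.e. projecting the less-singular potential onto the more singular model, and check it is well defined and maps $K^{k}_{R}$ into a compact set of the target. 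Third, the heart of the argument is the quantitative estimate
\[
\big|\,d(u,v)-d\big(T_{k}(u),T_{k}(v)\big)\,\big|\leq \epsilon_{k}(R)\qquad\text{for all }u,v\in K^{k}_{R},
\]
with $\epsilon_{k}(R)\to 0$ as $k\to\infty$ for each fixed $R$; together with a density/surjectivity statement guaranteeing that the images $T_{k}(K^{k}_{R})$ eventually fill the corresponding compact sets of the limit space up to $\epsilon_{k}$. Both facts I expect to extract from the monotone convergence of energies: since $\psi_{k}\searrow\psi$, one has $E_{\psi_{k}}\to E_{\psi}$ and $P_{\omega}(u,v)$ behaves continuously, so $d$-distances computed relative to $\psi_{k}$ converge to those computed relative to $\psi$, uniformly on energy-bounded sets.

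The main obstacle will be establishing the uniformity of $\epsilon_{k}(R)$ in the two psh potentials simultaneously, i.e.\ upgrading the pointwise convergence $E_{\psi_{k}}(u)\to E_{\psi}\big(T_{k}(u)\big)$ to convergence that is uniform over the whole energy-sublevel set $K^{k}_{R}$. Because these spaces are genuinely infinite-dimensional and not locally compact, one cannot simply invoke a finite subcover; instead I would control the energy differences through the monotonicity and cocycle properties of $E_{\psi}$ and through the stability of the rooftop envelope $P_{\omega}(\cdot,\cdot)$ under decreasing sequences of models, using the uniform mass bound $V_{\psi_{k}}\to V_{\psi}>0$ to prevent degeneration. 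Once this uniform energy-convergence estimate is in place, the verification that the chosen compact exhaustions, marked points $\psi_{k}\mapsto\psi$, and maps $T_{k}$ satisfy every clause of Definition~\ref{defn:cp-GH} becomes a bookkeeping matter, and the $cp$-$GH$ convergence follows. Finally I would remark that the same maps $T_{k}$, read as a compatible family, assemble into the direct system $\big\langle(\mathcal{E}^{1}(X,\omega,\psi_{k}),d),P_{k,j}\big\rangle$ whose limit is dense in the target, tying the convergence statement to the direct-limit description promised in the abstract.
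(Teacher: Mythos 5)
There is a genuine gap, and it sits at the two load-bearing points of your outline. First, your proposed exhaustion sets $K^{k}_{R}:=\{u\in\mathcal{E}^{1}(X,\omega,\psi_{k})\,:\,d(u,\psi_{k})\leq R,\ u\leq 0\}$ are \emph{not} compact: closed $d$-balls in these spaces are not compact, and indeed the failure of local compactness is precisely the reason the paper introduces the $cp$-$GH$ notion in the first place. No ``energy-compactness'' gives compactness of energy sublevel sets in the strong topology; what is compact are \emph{entropy} sublevel sets $\mathcal{K}_{C}\subset\mathcal{E}^{1}(X,\omega)$ (Theorem \ref{thm:Comp}, from [BBEGZ19]). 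The paper's proof takes $K_{j,k}:=P_{\omega}[\psi_{k}](\mathcal{K}_{j})$ and $K_{j}:=P_{\omega}[\psi](\mathcal{K}_{j})$, compact because the projections are $1$-Lipschitz (Proposition \ref{prop:Dist}), and the density clause of Definition \ref{defn:cp-GH} then rests on Lemma \ref{lem:ProDense} (density of $\mathcal{P}_{\mathcal{H}}(X,\omega,\psi)$), a nontrivial step your ``density/surjectivity statement'' leaves unaddressed.

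Second, your key estimate $|d(u,v)-d(T_{k}(u),T_{k}(v))|\leq\epsilon_{k}(R)$ uniformly over $K^{k}_{R}$ is stated over sets that vary with $k$ and are not compact, which blocks any compactness-based upgrade from pointwise to uniform convergence; your fallback (monotonicity and cocycle properties of $E_{\psi}$, stability of rooftop envelopes, $V_{\psi_{k}}\to V_{\psi}$) names exactly the hard step without supplying it. The paper's mechanism is structurally different: \emph{all} the compact sets are realized as images of one \emph{fixed} compact set $\mathcal{K}_{j}$ under the projections, so the distance functions $f_{k}(\varphi_{1},\varphi_{2}):=d\big(P_{\omega}[\psi_{k}](\varphi_{1}),P_{\omega}[\psi_{k}](\varphi_{2})\big)$ live on the fixed compact $\mathcal{K}_{j}\times\mathcal{K}_{j}$; they are Lipschitz (Proposition \ref{prop:Dist}), monotone in $k$ (contraction plus the composition identity of Lemma \ref{lem:Pro}), and converge pointwise (Lemma \ref{lem:Key}), so Dini's theorem gives uniform convergence (Proposition \ref{prop:UC}); the Gromov--Hausdorff convergence $(K_{j,k},\psi_{k})\to(K_{j},\psi)$ then follows from the correspondence $(u_{k},P_{\omega}[\psi](u_{k}))$ and its distortion (Proposition \ref{prop:GH}). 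Your choice of comparison map $T_{k}=P_{\omega}[\psi](\cdot)$ is the right one, but without re-parametrizing by a fixed compact set the uniformity cannot be obtained as you sketch. Incidentally, the detour through $X_{\mathcal{A}}$ and Theorem \ref{thmC} is unnecessary: the paper's proof of Theorem \ref{thmD} uses only Theorem \ref{thm:Comp}, Lemma \ref{lem:ProDense} and Proposition \ref{prop:GH}, none of which depend on the completeness of $(X_{\mathcal{A}},d_{\mathcal{A}})$.
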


Furthermore we show that the maps
$$
P_{i,j}:=P_{\omega}[\psi_{j}](\cdot):\big(\mathcal{E}^{1}(X,\omega,\psi_{i},d)\big)\to \big(\mathcal{E}^{1}(X,\omega,\psi_{j}),d\big)
$$
for $i\leq j$ are \emph{short maps} (i.e. $1$-Lipschitz). Hence $\big\langle \big(\mathcal{E}^{1}(X,\omega,\psi_{i}),d\big), P_{i,j}\big\rangle$ is a direct system in the category of metric spaces. We denote with $\mathfrak{m}-\lim_{\longrightarrow}$ the direct limit in this category.
\begin{theorem}
\label{thmE}
Let $\{\psi_{k}\}_{k\in\mathbbm{N}}\subset \mathcal{M}^{+}$ be a decreasing sequence converging to $\psi\in\mathcal{M}^{+}$. Then there is an isometric embedding
$$
\mathfrak{m}-\lim_{\longrightarrow}\big\langle\big(\mathcal{E}^{1}(X,\omega,\psi_{i}),d\big), P_{i,j}\big\rangle\hookrightarrow \big(\mathcal{E}^{1}(X,\omega,\psi),d\big)
$$
with dense image equal to $\bigcup_{k\in\mathbbm{N}}P_{\omega}[\psi]\big(\mathcal{E}^{1}(X,\omega,\psi_{k})\big)$.
\end{theorem}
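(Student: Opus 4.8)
The plan is to exhibit $\Phi$ as the map induced, through the universal property of the metric direct limit, by the compatible family of projections $\Phi_i:=P_\omega[\psi]\colon\mathcal{E}^1(X,\omega,\psi_i)\to\mathcal{E}^1(X,\omega,\psi)$, and then to establish separately that $\Phi$ is isometric, that its image is the asserted union, and that this union is dense. First I would check that $\big\langle(\mathcal{E}^1(X,\omega,\psi_i),d),P_{i,j}\big\rangle$ really is a direct system and that the $\Phi_i$ form a cone over it. The composition law $P_{j,k}\circ P_{i,j}=P_{i,k}$ and the compatibility $\Phi_j\circ P_{i,j}=\Phi_i$ both reduce to the single identity $P_\omega[\chi]\circ P_\omega[\eta]=P_\omega[\chi]$ valid whenever $\chi\preccurlyeq\eta$, applied with $(\chi,\eta)=(\psi_k,\psi_j)$ and $(\psi,\psi_j)$ respectively, which is a standard property of the envelope operator. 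Since the $P_{i,j}$ are short maps and $P_\omega[\psi]\colon\mathcal{E}^1(X,\omega,\psi_i)\to\mathcal{E}^1(X,\omega,\psi)$ is short by the same estimate (it is the projection onto the model envelope $\psi\in\mathcal{M}^{+}$), the cone $\{\Phi_i\}$ descends to a single $1$-Lipschitz map $\Phi$ out of the colimit.

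Next I would compute the colimit pseudodistance. For $u\in\mathcal{E}^1(X,\omega,\psi_i)$ and $v\in\mathcal{E}^1(X,\omega,\psi_j)$, shortness of the transition maps makes $k\mapsto d\big(P_\omega[\psi_k](u),P_\omega[\psi_k](v)\big)$ nonincreasing for $k\geq\max(i,j)$, so the colimit distance is
\[
\rho([u],[v])=\lim_{k\to\infty}d\big(P_\omega[\psi_k](u),P_\omega[\psi_k](v)\big).
\]
Factoring $P_\omega[\psi]=P_\omega[\psi]\circ P_\omega[\psi_k]$ and using shortness of $P_\omega[\psi]$ yields the easy inequality $d(\Phi u,\Phi v)\leq\rho([u],[v])$, with each term of the limit already $\geq d(\Phi u,\Phi v)$. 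Equality is the crux, and is where I expect the main obstacle to lie: it amounts to the continuity statement $\lim_{k}d\big(P_\omega[\psi_k](u),P_\omega[\psi_k](v)\big)=d\big(P_\omega[\psi](u),P_\omega[\psi](v)\big)$ as $\psi_k\searrow\psi$. Unwinding the definition of $d$, this reduces to the convergence of the three energy terms $E_{\psi_k}\big(P_\omega[\psi_k](\cdot)\big)\to E_\psi\big(P_\omega[\psi](\cdot)\big)$ together with the analogous statement for $P_\omega\big(P_\omega[\psi_k](u),P_\omega[\psi_k](v)\big)$; these are precisely the convergences established in Section~\ref{sec:5} for the $cp$-$GH$ statement of Theorem~\ref{thmD}, which I would invoke. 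This gives that $\Phi$ is an isometric embedding, injectivity on the metric identification being automatic because $d$ is a genuine distance by Theorem~\ref{thmA}.

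The image is then read off directly from $\Phi([u])=P_\omega[\psi](u)$, giving $\Phi\big(\mathfrak{m}-\lim_{\longrightarrow}\big)=\bigcup_{k}P_\omega[\psi]\big(\mathcal{E}^1(X,\omega,\psi_k)\big)$, an increasing union by the same factorization. For density, given $w\in\mathcal{E}^1(X,\omega,\psi)$ I would fix $k$ and set $w_m:=\max(w,\psi_k-m)$. From $\psi\leq\psi_k$ one gets $-m\leq w_m-\psi_k\leq C$, so $w_m$ has the singularity type of $\psi_k$ and hence $w_m\in\mathcal{E}^1(X,\omega,\psi_k)$; moreover $w\leq P_\omega[\psi](w_m)\leq w_m$ and $w_m\searrow w$ force $P_\omega[\psi](w_m)\searrow w$ as $m\to\infty$. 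Since $P_\omega\big(P_\omega[\psi](w_m),w\big)=w$, the distance collapses to $d\big(P_\omega[\psi](w_m),w\big)=E_\psi\big(P_\omega[\psi](w_m)\big)-E_\psi(w)$, which tends to $0$ by monotone continuity of $E_\psi$ along decreasing sequences. Thus $w$ lies in the closure of the image, establishing density and completing the argument.

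In summary, the only genuinely analytic ingredient is the continuity of $d$ with respect to the singularity type as $\psi_k\searrow\psi$, invoked for the isometry; the rest is the formal universal property of the direct limit together with the soft monotone-convergence behavior of the relative energy $E_\psi$.
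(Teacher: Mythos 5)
Your proof is correct, and it reaches the theorem by a route dual to the paper's. The paper never constructs the colimit explicitly: it takes $T:=\bigcup_{k}P_{\omega}[\psi]\big(\mathcal{E}^{1}(X,\omega,\psi_{k})\big)$, gets density at once from Lemma \ref{lem:ProDense} (since $\mathcal{P}_{\mathcal{H}}(X,\omega,\psi)\subset T$ by Lemma \ref{lem:Pro}), and then verifies that $\big\langle (T,d),P_{k}\big\rangle$ is the \emph{initial} target: for any target $\langle (Y,d_{Y}),P_{Y,k}\rangle$ it defines $P_{Y,T}(u):=P_{Y,k_{u}}(\varphi_{u})$ through a choice of minimal $k_{u}$ and representative $\varphi_{u}$, and checks well-definedness and $1$-Lipschitzness via Proposition \ref{prop:UC}. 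You instead realize the colimit concretely as the quotient of $\bigsqcup_{i}\mathcal{E}^{1}(X,\omega,\psi_{i})$ by the pseudodistance $\rho([u],[v])=\lim_{k}d\big(P_{\omega}[\psi_{k}](u),P_{\omega}[\psi_{k}](v)\big)$ and prove the canonical comparison map is an isometry onto $T$. The analytic heart is identical in both arguments --- the continuity $d\big(P_{\omega}[\psi_{k}](u),P_{\omega}[\psi_{k}](v)\big)\to d\big(P_{\omega}[\psi](u),P_{\omega}[\psi](v)\big)$ supplied by Lemma \ref{lem:Key} and Proposition \ref{prop:UC}, which is exactly what the paper uses for its well-definedness checks --- but your organization buys an explicit formula for the colimit metric and eliminates the representative choices, at the mild cost of needing the standard fact that the metric colimit exists and equals this quotient (worth one sentence, given the paper's remark that such colimits can be degenerate). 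Your density argument is also genuinely different and more elementary: the approximants $\max(w,\psi_{k}-m)\in\mathcal{E}^{1}(X,\omega,\psi_{k})$, the squeeze $w\leq P_{\omega}[\psi]\big(\max(w,\psi_{k}-m)\big)\leq \max(w,\psi_{k}-m)\searrow w$, and continuity of $E_{\psi}$ along decreasing sequences (Proposition \ref{prop:Decr}) bypass the entropy machinery ($\mathcal{K}_{C}$, Lemma \ref{lem:ProDense}) entirely, and the computation checks out.

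One point to make airtight in the isometry step: applying Lemma \ref{lem:Key} in the decreasing case presupposes $P_{\omega}[\psi_{k}](u)\searrow P_{\omega}[\psi](u)$ pointwise, which you do not verify. It is true but deserves a line: the decreasing limit $w$ satisfies $w\leq P_{\omega}[w]+\sup_{X}w$ with $P_{\omega}[w]\leq \psi_{k}$ for all $k$, hence $w\leq \psi+\sup_{X}w$; combined with $w\leq u$ this gives $w\leq P_{\omega}[\psi](u)$, the reverse inequality being trivial. Alternatively, apply Proposition \ref{prop:UC} directly with the two-point compact set $\tilde{K}=\big\{P_{\omega}[\psi_{k_{0}}](u),P_{\omega}[\psi_{k_{0}}](v)\big\}\subset \mathcal{E}^{1}(X,\omega,\psi_{k_{0}})$, $k_{0}=\max(i,j)$, together with $P_{\omega}[\psi_{k}]\circ P_{\omega}[\psi_{k_{0}}]=P_{\omega}[\psi_{k}]$ from Lemma \ref{lem:Pro}; this packages the pointwise statement (dense subset plus Dini) for you.
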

\subsection{Related Works}
During the last period of the preparation of this paper, Xia in \cite{X19b} independently showed Theorem \ref{thmA} as particular case of his main Theorem.  
\subsection{Structure of the paper}
After recalling some preliminaries in section \ref{sec:Preli}, the third section is dedicated to prove Theorem \ref{thmA}. In this section many of the proofs are just easily adapted to our setting from the absolute setting in the Kähler and in the big case (in particular \cite{DDNL18a}).\\
Section \ref{sec:5} is the core of the paper, where we show Theorems \ref{thmC}, \ref{thmD}, and \ref{thmE}.
\subsection{Acknowledgments}
The author is grateful to his two advisors S. Trapani and D. Witt Nyström for their comments and suggestions. He would also like to thank M. Xia for inspiring talks.
\section{Preliminaries}
\label{sec:Preli}
Let $(X,\omega)$ be a compact Kähler manifold ($\omega$ fixed Kähler form on $X$). We denote with $PSH(X,\omega)$ the set of all $\omega$-psh ($\omega-$plurisubharmonic) functions on $X$, i.e. the set of all functions $u$ given locally as sum of a plurisubharmonic function and a smooth function such that $\omega+dd^{c}u\geq 0$ as $(1,1)$-current. Here $d^{c}:=\frac{i}{2\pi}(\partial-\bar{\partial})$ so that $dd^{c}=\frac{i}{\pi}\partial\bar{\partial}$. We say that $u$ is more singular than $v$ if there exists a constant $C\in\mathbbm{R}$ such that $u\leq v+C$. Being more/less singular is a partial order on $PSH(X,\omega)$. We use $\preccurlyeq$ to denote such order, and we indicate with $[u]$ the class of equivalence with respect to this order. Moreover, according to the notation in $\cite{DDNL17b}$, $PSH(X,\omega,\psi)$ is the set of all $u\in PSH(X,\omega)$ such that $u \preccurlyeq \psi$, and $u\in PSH(X,\omega,\psi)$ is said to have \emph{$\psi$-relative minimal singularities} if $u\in [\psi]$. To start the investigation of these functions we recall the construction of the envelopes introduced in \cite{RWN14}: for any couple of $\omega-$psh functions $u,v$, the function
$$
P_{\omega}[u](v):=\Big(\lim_{C\to +\infty}P_{\omega}(u+C,v)\Big)^{*}
$$
is $\omega-$psh, where $P_{\omega}(u,v):=\sup\{w\in PSH(X,\omega) \, :\, w\leq \min(u,v)\}$ is the \emph{rooftop envelope} (the star is for the upper semicontinuous regularization). Roughly speaking if $u\preccurlyeq v$ then $P_{\omega}[u](v)$ is the largest $\omega-$psh function that is bounded from above by $v$ and that preserves the singularities type $[u]$. We say that an $\omega$-psh function $\psi$ is a \emph{model type envelope} if $P_{\omega}[\psi]:=P_{\omega}[\psi](0)=\psi$. There are plenty of these functions and $P_{\omega}[P_{\omega}[\psi]]=P_{\omega}[\psi]$ if $\int_{X}\big(\omega+dd^{c}\psi)^{n}>0$ by Theorem $3.12$ in \cite{DDNL17b} (in the sense of non-pluripolar product as explained below). Hence $\psi\to P_{\omega}[\psi]$ may be thought as a projection from the set of $\omega$-psh functions to the set of model type envelopes when restricted to functions with positive Monge-Ampère mass. We refer to Remark $1.6$ in \cite{DDNL17b} for some tangible examples of these functions. Denoting with $\mathcal{M}$ the set of model type envelopes, it is easy to see that if $\psi_{1},\psi_{2}\in PSH(X,\omega)$ satisfy $\psi_{1}\preccurlyeq \psi_{2}$ then $\psi_{1}\leq \psi_{2}$. Hence the partial orders $\leq, \preccurlyeq$ coincide on $\mathcal{M}$.\\

Given $T_{1},\cdots,T_{p}$ closed and positive $(1,1)$-currents, with $T_{1}\wedge \cdots \wedge T_{p}$ we will mean the \emph{non-pluripolar} product (see \cite{BEGZ10}). It is always well-defined on a compact Kähler manifold (Proposition $1.6$ in \cite{BEGZ10}) and it is \emph{local in the plurifine topology}, i.e. in the coarsest topology with respect to which all psh functions on all open subsets of $X$ become continuous (see also \cite{BT87}). Moreover, setting $\omega_{\varphi}:=\omega+dd^{c}\varphi$ if $\varphi\in PSH(X,\omega)$, the map
$$
PSH(X,\omega)\ni \varphi\to V_{\varphi}:=\int_{X}\omega_{\varphi}^{n}\in\mathbbm{R}
$$
respects the partial order defined before by the main theorem in \cite{WN17}, i.e. if $u \preccurlyeq v$ then $V_{u}\leq V_{v}$. Such monotonicity still holds considering the mixed product, i.e. $\int_{X}\omega_{u_{1}}\wedge\cdots \wedge\omega_{u_{n}}\leq \int_{X}\omega_{v_{1}}\wedge\cdots\wedge \omega_{v_{n}}$ if $u_{j}\preccurlyeq v_{j}$ for any $j=1,\dots,n$ (Theorem $2.4$ in \cite{DDNL17b}). Generally we have the following principle:
\begin{prop}\emph{(Comparison Principle).}
\label{prop:Comp}
Let $u,v\in PSH(X,\omega)$ such that $u \preccurlyeq v$, and $w_{1},\dots,w_{n-p}\in PSH(X,\omega)$ for $1\leq p\leq n$ integer. Then
$$
\int_{\{v<u\}}\omega_{u}^{p}\wedge\omega_{w_{1}}\wedge\cdots\wedge\omega_{w_{n-p}}\leq\int_{\{v<u\}}\omega_{v}^{p}\wedge\omega_{w_{1}}\wedge\cdots\wedge\omega_{w_{n-p}}.
$$
\end{prop}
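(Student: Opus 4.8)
The plan is to mimic the classical Bedford--Taylor comparison principle, using only the two facts recalled in the preliminaries: the locality of the non-pluripolar product in the plurifine topology and the monotonicity of mixed masses. Write $\Theta:=\omega_{w_{1}}\wedge\cdots\wedge\omega_{w_{n-p}}$ for the fixed factor and, for $t\in\mathbbm{R}$, set $P_{t}:=\max(u,v+t)\in PSH(X,\omega)$. The first step is to check that $P_{t}$ lies in the singularity class $[v]$ for every $t$: indeed $v+t\leq P_{t}$, while $u\preccurlyeq v$ gives $u\leq v+C$ and hence $P_{t}\leq v+\max(C,t)$, so $[P_{t}]=[v]$. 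Applying the monotonicity of mixed masses (Theorem~$2.4$ in \cite{DDNL17b}) in both directions, with $\Theta$ kept fixed on the two sides, then yields $\int_{X}\omega_{P_{t}}^{p}\wedge\Theta=\int_{X}\omega_{v}^{p}\wedge\Theta=:M$ independently of $t$.

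Next I would localize. The sets $A_{t}:=\{u>v+t\}$ and $B_{t}:=\{u<v+t\}$ are plurifine open, since $\omega$-psh functions are continuous in the plurifine topology, and on them $P_{t}$ coincides as a function with $u$ and with $v+t$ respectively. By plurifine locality of the non-pluripolar product (and $\omega_{v+t}=\omega_{v}$) this gives $\mathbbm{1}_{A_{t}}\omega_{P_{t}}^{p}\wedge\Theta=\mathbbm{1}_{A_{t}}\omega_{u}^{p}\wedge\Theta$ and $\mathbbm{1}_{B_{t}}\omega_{P_{t}}^{p}\wedge\Theta=\mathbbm{1}_{B_{t}}\omega_{v}^{p}\wedge\Theta$. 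Writing $N(t):=\{u=v+t\}$ and expanding $M$ both as $\int_{X}\omega_{P_{t}}^{p}\wedge\Theta$ and as $\int_{X}\omega_{v}^{p}\wedge\Theta$, the contributions over $B_{t}$ cancel and one is left with
\[
\int_{A_{t}}\omega_{u}^{p}\wedge\Theta=\int_{A_{t}}\omega_{v}^{p}\wedge\Theta+\int_{N(t)}\omega_{v}^{p}\wedge\Theta-\int_{N(t)}\omega_{P_{t}}^{p}\wedge\Theta.
\]

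The main obstacle is the coincidence set $N(t)$, which plurifine locality says nothing about. I would dispose of it by a standard countability argument: the level sets $\{u-v=t\}$ for $t>0$ are pairwise disjoint and carry finite total $\omega_{v}^{p}\wedge\Theta$-mass (the measure is finite and does not charge the pluripolar locus $\{v=-\infty\}$), so $\int_{N(t)}\omega_{v}^{p}\wedge\Theta=0$ for all but countably many $t>0$. For such a \emph{good} $t>0$, the displayed identity together with the nonnegativity of $\int_{N(t)}\omega_{P_{t}}^{p}\wedge\Theta$ gives
\[
\int_{A_{t}}\omega_{u}^{p}\wedge\Theta\leq\int_{A_{t}}\omega_{v}^{p}\wedge\Theta\leq\int_{\{v<u\}}\omega_{v}^{p}\wedge\Theta,
\]
the last step because $A_{t}\subset\{v<u\}$ whenever $t>0$.

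Finally I would let $t\downarrow 0$ along good values. Since $A_{t}\uparrow\{u>v\}=\{v<u\}$ as $t\downarrow 0$, monotone convergence for the positive measure $\omega_{u}^{p}\wedge\Theta$ gives $\int_{A_{t}}\omega_{u}^{p}\wedge\Theta\uparrow\int_{\{v<u\}}\omega_{u}^{p}\wedge\Theta$, and passing to the limit in the uniform bound above yields the asserted inequality $\int_{\{v<u\}}\omega_{u}^{p}\wedge\Theta\leq\int_{\{v<u\}}\omega_{v}^{p}\wedge\Theta$. The only point that requires genuine care beyond routine bookkeeping with the two cited properties of the non-pluripolar product is precisely the treatment of the coincidence sets $N(t)$; everything else follows from the constancy of the mass of $\omega_{P_{t}}^{p}\wedge\Theta$ and plurifine localization.
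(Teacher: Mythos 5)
Your proof is correct and is essentially the paper's own argument: since $\max(u,v+t)=\max(v,u-t)+t$, your envelope $P_{t}$ is, up to an additive constant (hence with the same curvature current), the paper's $v_{\epsilon}:=\max(v,u-\epsilon)$, and both proofs combine the total mass equality for potentials with the same singularity type with plurifine locality of the non-pluripolar product and a monotone limit as the parameter tends to $0$. The only difference is cosmetic: the paper never needs your countability selection of good $t$, because it keeps only the one-sided estimate $\int_{X}\omega_{v_{\epsilon}}^{p}\wedge\Theta\geq\int_{\{v>u-\epsilon\}}\omega_{v}^{p}\wedge\Theta+\int_{\{v<u-\epsilon\}}\omega_{u}^{p}\wedge\Theta$ and simply drops the coincidence set, and in fact your own setup already allows this, since $N(t)\subset\{v<u\}$ off a pluripolar set, so $\int_{A_{t}}\omega_{v}^{p}\wedge\Theta+\int_{N(t)}\omega_{v}^{p}\wedge\Theta\leq\int_{\{v<u\}}\omega_{v}^{p}\wedge\Theta$ holds for every $t>0$.
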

\begin{proof}
The proof proceeds as that of Corollary $1.4$ in \cite{WN17}. For any $\epsilon>0$, set $v_{\epsilon}:=\max(v,u-\epsilon)$. Thus
\begin{multline*}
\int_{X}\omega_{v}^{p}\wedge\omega_{w_{1}}\wedge\cdots\wedge\omega_{w_{n-p}}=\int_{X}\omega_{v_{\epsilon}}^{p}\wedge\omega_{w_{1}}\wedge\cdots\wedge\omega_{w_{n-p}}\geq\\
\geq\int_{\{v>u-\epsilon\}}\omega_{v}^{p}\wedge\omega_{w_{1}}\wedge\cdots\wedge\omega_{w_{n-p}}+\int_{\{v<u-\epsilon\}}\omega_{u}^{p}\wedge\omega_{w_{1}}\wedge\cdots\wedge\omega_{w_{n-p}},
\end{multline*}
which implies
$$
\int_{\{v<u-\epsilon\}}\omega_{u}^{p}\wedge\omega_{w_{1}}\wedge\cdots\wedge\omega_{w_{n-p}}\leq \int_{\{v<u\}}\omega_{v}^{p}\wedge\omega_{w_{1}}\wedge\cdots\wedge\omega_{w_{n-p}}.
$$
The result follows from letting $\epsilon\to 0$.
\end{proof}
We also recall some results of $\cite{DDNL17b}$ which will be very useful in the sequel:
\begin{lem}[Lemma $3.7$, \cite{DDNL17b}]
\label{lem:PMass}
Let $u,v\in PSH(X,\omega)$. If $P_{\omega}(u,v)\neq -\infty$, then
$$
\omega_{P_{\omega}(u,v)}^{n}\leq \mathbbm{1}_{\{P_{\omega}(u,v)=u\}}\omega_{u}^{n}+\mathbbm{1}_{\{P_{\omega}(u,v)=v\}}\omega_{v}^{n}.
$$
\end{lem}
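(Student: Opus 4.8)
The plan is to prove the stronger structural statement that the nonpluripolar mass of the envelope $P:=P_{\omega}(u,v)$ lives on the \emph{contact set} $\{P=u\}\cup\{P=v\}$ and is there dominated by the Monge--Amp\`ere measure of the active obstacle. Since $P\leq\min(u,v)$, every point of $X$ satisfies $P=u$, or $P=v$, or $P<u$ and $P<v$; writing $G:=\{P<u\}\cap\{P<v\}$ we get $X=\{P=u\}\cup\{P=v\}\cup G$, and as $P,u,v$ are plurifine continuous the set $G$ is plurifine open. It therefore suffices to prove (i) $\mathbbm{1}_{G}\,\omega_{P}^{n}=0$ and (ii) $\mathbbm{1}_{\{P=u\}}\omega_{P}^{n}\leq\mathbbm{1}_{\{P=u\}}\omega_{u}^{n}$ together with the symmetric bound for $v$: summing the two bounds in (ii) over the cover and discarding $G$ by (i) gives the claim, the overlap $\{P=u\}\cap\{P=v\}$ being harmless because the right-hand side is a sum of positive measures. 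I would first reduce to obstacles with minimal singularities by truncating $u^{j}:=\max(u,-j)$ and $v^{j}:=\max(v,-j)$, proving the statement for the bounded envelopes $P_{\omega}(u^{j},v^{j})$ and then passing to the limit via the monotone convergence of the nonpluripolar Monge--Amp\`ere operator along the decreasing sequences together with a semicontinuity argument for the associated contact sets.

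The crux is claim (i): the envelope carries no mass where it strictly undercuts both obstacles. This is a maximality principle, reflecting that on $G$ the function $P$ solves the homogeneous complex Monge--Amp\`ere equation. In the bounded case I would argue by balayage: on a small coordinate ball $B$ with $\overline{B}\subset G$, so that $P<\min(u,v)$ on $\overline{B}$, if $\omega_{P}^{n}(B)>0$ one solves the local homogeneous Dirichlet problem with boundary data $P|_{\partial B}$, obtaining an $\omega$-psh competitor that is $\geq P$ on $B$, strictly $>P$ somewhere, and still $\leq\min(u,v)$ provided $B$ is small enough that the boundary oscillation of $P$ is below $\min_{\overline B}(\min(u,v)-P)>0$; this contradicts the maximality of $P=P_{\omega}(u,v)$ as the largest $\omega$-psh minorant of $\min(u,v)$. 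The delicate points are keeping the modified candidate globally $\omega$-psh and below $\min(u,v)$, which forces the compactness/strict-inequality argument on $\overline{B}$, and, for the singular case, the reduction above, since the balayage must be performed on the locus where $P$ is locally bounded (the polar set carries no nonpluripolar mass and the product is local in the plurifine topology).

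For claim (ii) I would exploit the obstacle-problem nature of the envelope rather than mere plurifine locality, which is \emph{not} sufficient here because $\{P=u\}$ need not be plurifine open; the envelope property is genuinely essential, as a naive coincidence-set domination for an arbitrary pair $P\leq u$ fails (an equilibrium potential touches its obstacle along a set carrying extra Monge--Amp\`ere mass). Concretely, where $P\leq u$ touches $u$ the difference $u-P$ attains an interior minimum, so $dd^{c}(u-P)\geq0$ and hence $\omega_{P}\leq\omega_{u}$ at contact, giving $\omega_{P}^{n}\leq\omega_{u}^{n}$ on $\{P=u\}$; in the bounded setting this pointwise Hessian comparison is made rigorous through the standard regularity of the Perron--Bremermann envelope (a viscosity/approximation argument), and the symmetric statement holds on $\{P=v\}$. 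Combining (i) and (ii) over the cover $X=\{P=u\}\cup\{P=v\}\cup G$ yields the bound for bounded obstacles, and the monotone limit of the first paragraph transfers it to the general case $P_{\omega}(u,v)\neq-\infty$, completing the proof. The main obstacle throughout is the obstacle-problem regularity underlying (i) and (ii); the Comparison Principle of Proposition~\ref{prop:Comp} and the mass monotonicity recalled above serve as consistency checks on the total-mass inequality $\int_{X}\omega_{P}^{n}\leq\int_{\{P=u\}}\omega_{u}^{n}+\int_{\{P=v\}}\omega_{v}^{n}$.
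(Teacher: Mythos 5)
You should note at the outset that the paper contains no proof of this lemma: it is imported verbatim from \cite{DDNL17b} (Lemma 3.7 there), so your attempt can only be measured against the proof in that source. Your overall architecture does match it — reduce to potentials with minimal singularities via the canonical cutoffs $\max(u,-j)$, show the envelope's measure puts no mass on $G=\{P<u\}\cap\{P<v\}$, dominate it on each contact piece, and pass to the limit. But two of your three steps have genuine problems. For step (i), your balayage is run on a coordinate ball $B$ with $\overline{B}\subset G$. The set $G$ is plurifine open, but since $u,v,P$ are merely upper semicontinuous (quasi-continuous), $G$ may have \emph{empty Euclidean interior}, so no such ball need exist and the argument cannot even start, while $\omega_{P}^{n}(G)>0$ is still possible a priori. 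The standard repair is a capacity argument: remove a set of arbitrarily small capacity on which quasi-continuity fails so that $u,v,P$ become continuous on the complement, or run the Choquet-lemma/lattice-stability argument for the defining family of the envelope. This is the one place where envelope-ness is genuinely used, and your sketch does not survive without the capacity machinery.

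For step (ii) your diagnosis is backwards. You assert that the coincidence-set bound $\mathbbm{1}_{\{P=u\}}\omega_{P}^{n}\leq \mathbbm{1}_{\{P=u\}}\omega_{u}^{n}$ requires the envelope property and fails for arbitrary pairs, citing equilibrium potentials. The equilibrium example refutes only the \emph{opposite} inequality (mass of the larger potential dominated by that of the smaller): with $V_{K}\geq 0$ touching the obstacle $0$ on $K$, it is $\mathbbm{1}_{K}\omega_{V_{K}}^{n}\leq\mathbbm{1}_{K}\omega^{n}$ that fails. In the direction actually needed — the \emph{smaller} dominated by the \emph{larger} on the contact set — the inequality holds for \emph{every} pair of $\omega$-psh functions $P\leq u$, by an elementary trick: for $\varepsilon>0$ set $w_{\varepsilon}:=\max(P,u-\varepsilon)$; plurifine locality on the plurifine open set $\{P>u-\varepsilon\}\supset\{P=u\}$ gives $\mathbbm{1}_{\{P=u\}}\omega_{P}^{n}=\mathbbm{1}_{\{P=u\}}\omega_{w_{\varepsilon}}^{n}\leq \omega_{w_{\varepsilon}}^{n}$, and letting $\varepsilon\downarrow 0$, $w_{\varepsilon}\nearrow u$ with conserved total mass in the bounded case, so testing against closed sets yields $\mathbbm{1}_{\{P=u\}}\omega_{P}^{n}\leq\omega_{u}^{n}$ and hence, restricting, $\leq\mathbbm{1}_{\{P=u\}}\omega_{u}^{n}$. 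This is the route the cited source relies on. Your substitute — a pointwise Hessian comparison at a contact minimum made rigorous ``through the standard regularity of the Perron--Bremermann envelope'' — is unavailable here: with merely bounded psh obstacles, $P_{\omega}(u,v)$ has no $C^{1,1}$ or even continuity regularity, and viscosity arguments need continuous data. So as written, (i) cannot start and (ii) rests on a misread counterexample and unavailable regularity; both targets are nevertheless true, and your plan can be salvaged by the capacity version of (i), the $\max(P,u-\varepsilon)$ argument for (ii), and a limiting step stabilized exactly as in the proof of Lemma \ref{lem:Key}, testing the weak limits against the plurifine-closed sets $\{\phi\geq u^{k}\}$ with $k$ fixed.
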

\begin{thm}[Theorem $3.8$, \cite{DDNL17b}]
\label{thm:3.8}
Let $u,\psi\in PSH(X,\omega)$ such that $u$ is less singular than $\psi$. Then
$$
MA_{\omega}\big(P_{\omega}[\psi](u)\big)\leq \mathbbm{1}_{\{P_{\omega}[\psi](u)=u\}}MA_{\omega}(u).
$$
In particular if $\psi$ is a model type envelope then $MA_{\omega}(\psi)\leq \mathbbm{1}_{\{\psi=0\}}MA_{\omega}(0)$.
\end{thm}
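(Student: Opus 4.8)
The plan is to start from the rooftop envelopes defining $P_\omega[\psi](u)$ and to pass to the limit in the measure inequality furnished by Lemma \ref{lem:PMass}. Write $\phi_C := P_\omega(\psi+C,u)$ and $\phi := P_\omega[\psi](u) = \big(\lim_{C\to+\infty}\phi_C\big)^*$. Since $\psi+C \leq \psi+C'$ for $C\leq C'$, the family $\phi_C$ is increasing in $C$ and satisfies $\phi_C\leq u$ for every $C$, so $\phi_C\nearrow\phi\leq u$. Applying Lemma \ref{lem:PMass} to the pair $(\psi+C,u)$ and using $\omega_{\psi+C}^n=\omega_\psi^n$, I obtain
\[
\omega_{\phi_C}^n \leq \mathbbm{1}_{\{\phi_C=\psi+C\}}\,\omega_\psi^n + \mathbbm{1}_{\{\phi_C=u\}}\,\omega_u^n .
\]
The whole argument then reduces to controlling the two terms on the right as $C\to+\infty$ while checking that the left-hand side converges to $\omega_\phi^n$.

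For the first term I would observe that on the contact set $\{\phi_C=\psi+C\}$ one has $\psi+C=\phi_C\leq u$, hence $\{\phi_C=\psi+C\}\subseteq\{\psi\leq u-C\}$. As $C$ increases these sets decrease to a set contained in $\{\psi=-\infty\}\cup\{u=-\infty\}$, which is pluripolar. Since the non-pluripolar product $\omega_\psi^n$ charges no pluripolar set and has finite total mass $V_\psi$, downward monotone convergence gives $\int_{\{\phi_C=\psi+C\}}\omega_\psi^n\to 0$; thus the first term tends to $0$ in total mass, hence weakly.

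For the second term I would use that the contact sets increase: if $\phi_C(x)=u(x)$ then, from $\phi_C\leq\phi_{C'}\leq u$ for $C\leq C'$, also $\phi_{C'}(x)=u(x)$, and moreover $\bigcup_C\{\phi_C=u\}\subseteq\{\phi=u\}$. Hence $\mathbbm{1}_{\{\phi_C=u\}}\,\omega_u^n$ is an increasing sequence of measures bounded above by $\mathbbm{1}_{\{\phi=u\}}\,\omega_u^n$. Passing to the limit in the displayed inequality and invoking the continuity of the non-pluripolar Monge–Ampère operator along the increasing sequence $\phi_C\nearrow\phi$ (so that $\omega_{\phi_C}^n\to\omega_\phi^n$ weakly) yields $\omega_\phi^n\leq\mathbbm{1}_{\{\phi=u\}}\,\omega_u^n$, which is exactly $MA_\omega\big(P_\omega[\psi](u)\big)\leq\mathbbm{1}_{\{P_\omega[\psi](u)=u\}}MA_\omega(u)$.

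I expect the main obstacle to be precisely this last passage to the limit: the non-pluripolar product is not weakly continuous for arbitrary sequences, so establishing $\omega_{\phi_C}^n\to\omega_\phi^n$ requires the monotone continuity of the operator along increasing sequences together with the convergence of total masses $V_{\phi_C}\to V_\phi$, the delicate point being that no mass escapes onto the pluripolar locus in the limit. Finally, the ``in particular'' assertion follows by taking $u=0$: every model type envelope satisfies $\psi\leq 0$, so $0$ is less singular than $\psi$ and $\psi=P_\omega[\psi](0)$, whence the general inequality specializes to $MA_\omega(\psi)\leq\mathbbm{1}_{\{\psi=0\}}MA_\omega(0)$.
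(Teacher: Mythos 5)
Your strategy is exactly the right one, and in fact it coincides with the proof of this statement in its source: the paper does not prove Theorem \ref{thm:3.8} but imports it from \cite{DDNL17b}, where the argument is precisely the one you propose — apply Lemma \ref{lem:PMass} to $\phi_{C}:=P_{\omega}(\psi+C,u)$, observe that the contact set $\{\phi_{C}=\psi+C\}$ is contained in $\{\psi\leq u-C\}$, which decreases to a pluripolar set not charged by the non-pluripolar measure $\omega_{\psi}^{n}$, note that the sets $\{\phi_{C}=u\}$ increase inside $\{\phi=u\}$ (your verification of this, using $\phi\leq u$ because $u$ is upper semicontinuous, is correct), and pass to the limit along $\phi_{C}\nearrow\phi$. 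The handling of both terms on the right-hand side is sound, and the ``in particular'' clause is correctly reduced to the case $u=0$.

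However, as written the proof has one genuinely unfinished step, which you yourself flag as ``the main obstacle'' but then leave open: you invoke the weak convergence $\omega_{\phi_{C}}^{n}\to\omega_{\phi}^{n}$, which by Theorem \ref{thm:conv} requires the mass convergence $V_{\phi_{C}}\to V_{\phi}$, and you never establish it. This is exactly where the hypothesis that $u$ is less singular than $\psi$ — which your argument otherwise never uses — must enter. Choose $C_{0}$ with $\psi\leq u+C_{0}$; then $\psi-C_{0}\leq\min(\psi+C,u)$ for all $C\geq -C_{0}$, so $\psi-C_{0}\leq\phi_{C}\leq\psi+C$. This shows first that $\phi_{C}\not\equiv-\infty$ (a hypothesis of Lemma \ref{lem:PMass} you apply silently), and second that $\phi_{C}\in[\psi]$, whence $V_{\phi_{C}}=V_{\psi}$ for every such $C$ by the monotonicity theorem of \cite{WN17}. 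Since $\phi_{C}\leq\phi$ gives $V_{\phi_{C}}\leq V_{\phi}$, while the first half of Theorem \ref{thm:conv} applied with $\chi\equiv 1$ gives $\liminf_{C}V_{\phi_{C}}\geq V_{\phi}$, the masses do converge (all equal to $V_{\psi}$, so in particular no mass escapes to the pluripolar locus), and the second half of Theorem \ref{thm:conv} then yields the weak convergence you need. Alternatively, you can bypass weak convergence entirely: for any continuous $\chi\geq 0$ the $\liminf$-inequality of Theorem \ref{thm:conv} already gives
$$
\int_{X}\chi\,\omega_{\phi}^{n}\leq\liminf_{C\to\infty}\int_{X}\chi\,\omega_{\phi_{C}}^{n}\leq \int_{X}\chi\,\mathbbm{1}_{\{\phi=u\}}\omega_{u}^{n},
$$
and testing against all such $\chi$ gives the asserted inequality of Borel measures by regularity. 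With either one-line repair your proof is complete and agrees with the one in \cite{DDNL17b}.
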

\begin{thm}[Theorem 2.3, \cite{DDNL17b}]
\label{thm:conv}
Let $\{u_{j},u_{j}^{k}\}_{j=1,\dots,n}\in PSH(X,\omega)$ such that $u_{j}^{k}\to u_{j}$ in capacity as $k\to \infty$ for $j=1,\dots,n$. Then for all bounded quasi-continuous function $\chi$,
$$
\liminf_{k\to \infty}\int_{X}\chi \omega_{u_{1}^{k}}\wedge\cdots\wedge \omega_{u_{n}^{k}}\geq\int_{X}\chi \omega_{u_{1}}\wedge\cdots\wedge \omega_{u_{n}}.
$$
If additionally,
$$
\int_{X}\omega_{u_{1}}\wedge\cdots\wedge\omega_{u_{n}}\geq \limsup_{k\to \infty}\int_{X}\omega_{u_{1}^{k}}\wedge\cdots\wedge \omega_{u_{n}^{k}}
$$
then $\omega_{u_{1}^{k}}\wedge\cdots\wedge\omega_{u_{n}^{k}}\to \omega_{u_{1}}\wedge \cdots\wedge \omega_{u_{n}}$ in the weak sense of measures on $X$.
\end{thm}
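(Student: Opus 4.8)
The plan is to reduce everything to the classical convergence theory for \emph{bounded} $\omega$-psh functions (Bedford--Taylor, Xing) by means of the canonical truncations $u_j^t:=\max(u_j,-t)$ and $u_j^{k,t}:=\max(u_j^k,-t)$, together with the plurifine locality of the non-pluripolar product recalled in the preliminaries. Since $x\mapsto\max(x,-t)$ is $1$-Lipschitz, the hypothesis $u_j^k\to u_j$ in capacity passes to $u_j^{k,t}\to u_j^t$ in capacity for every fixed $t$, and the truncated potentials are uniformly bounded. The starting point is the description of the non-pluripolar product as an increasing plurifine-local limit: writing $O_t:=\bigcap_j\{u_j>-t\}$ and $O_t^k:=\bigcap_j\{u_j^k>-t\}$, plurifine locality gives $\mathbbm{1}_{O_t}\,\bigwedge_j\omega_{u_j}=\mathbbm{1}_{O_t}\,\bigwedge_j\omega_{u_j^t}$ (and similarly for the $u_j^k$), and these truncated masses increase to the full product as $t\to\infty$, which in particular charges no mass on the pluripolar set $\{u_j=-\infty \text{ for some } j\}$.

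\emph{First inequality.} This is a lower-semicontinuity statement, so I establish it for nonnegative bounded quasi-continuous $\chi$. Fix $t$ and a Lipschitz cut-off $\phi_{t,\delta}\colon\mathbbm{R}\to[0,1]$ with $\phi_{t,\delta}\equiv 0$ on $(-\infty,-t]$ and $\phi_{t,\delta}\equiv 1$ on $[-t+\delta,\infty)$. Since $g_\delta^k:=\prod_j\phi_{t,\delta}(u_j^k)$ vanishes off $O_t^k$ and is $\le 1$, positivity and plurifine locality give $\bigwedge_j\omega_{u_j^k}\ge \mathbbm{1}_{O_t^k}\bigwedge_j\omega_{u_j^{k,t}}\ge g_\delta^k\bigwedge_j\omega_{u_j^{k,t}}$, whence $\int_X\chi\,\bigwedge_j\omega_{u_j^k}\ge\int_X\chi g_\delta^k\,\bigwedge_j\omega_{u_j^{k,t}}$. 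Now $\chi g_\delta^k$ is bounded quasi-continuous and, because $\phi_{t,\delta}$ is Lipschitz, $g_\delta^k\to g_\delta:=\prod_j\phi_{t,\delta}(u_j)$ in capacity, while $\bigwedge_j\omega_{u_j^{k,t}}\to\bigwedge_j\omega_{u_j^t}$ weakly with the constant total mass $\int_X\omega^n$. Invoking the bounded-case convergence theorem (which handles quasi-continuous test functions converging in capacity, see \cite{BT87}) and letting $k\to\infty$ yields $\liminf_k\int_X\chi\,\bigwedge_j\omega_{u_j^k}\ge\int_X\chi g_\delta\,\bigwedge_j\omega_{u_j^t}$. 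Letting $\delta\to 0$ makes $\prod_j\phi_{t,\delta}(u_j)\nearrow\mathbbm{1}_{O_t}$, so monotone convergence and plurifine locality give $\liminf_k\int_X\chi\,\bigwedge_j\omega_{u_j^k}\ge\int_{O_t}\chi\,\bigwedge_j\omega_{u_j}$; a final $t\to\infty$ with one more monotone convergence produces the claimed $\liminf_k\int_X\chi\,\bigwedge_j\omega_{u_j^k}\ge\int_X\chi\,\bigwedge_j\omega_{u_j}$.

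\emph{Second part.} Taking $\chi\equiv 1$ in the first inequality gives $\liminf_k\int_X\bigwedge_j\omega_{u_j^k}\ge\int_X\bigwedge_j\omega_{u_j}$, which together with the additional $\limsup$ hypothesis forces the total masses to converge. For a signed bounded quasi-continuous $\chi$ set $M:=\|\chi\|_\infty$ and apply the first inequality to the nonnegative functions $M\pm\chi$; subtracting the now-convergent quantity $M\int_X\bigwedge_j\omega_{u_j^k}$ upgrades the two liminf inequalities to $\liminf_k\int_X\chi\,\bigwedge_j\omega_{u_j^k}\ge\int_X\chi\,\bigwedge_j\omega_{u_j}\ge\limsup_k\int_X\chi\,\bigwedge_j\omega_{u_j^k}$, i.e. $\int_X\chi\,\bigwedge_j\omega_{u_j^k}\to\int_X\chi\,\bigwedge_j\omega_{u_j}$. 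Specializing to continuous $\chi$ gives the asserted weak convergence of measures.

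\emph{Main obstacle.} The delicate point is the first inequality and its reduction to the bounded case. One must handle the genuinely unbounded potentials only through the increasing plurifine-local approximation, ensuring no mass escapes onto the pluripolar locus, and—crucially—one cannot use a cut-off built from the limit functions $u_j$, since the required locality identity lives on the $k$-dependent domains $O_t^k$ rather than on $O_t$. Replacing $\mathbbm{1}_{O_t^k}$ by the Lipschitz cut-offs $g_\delta^k$ is exactly what makes the integrand quasi-continuous and capacity-convergent, so that the classical bounded convergence theorem applies to the $k$-varying integrand $\chi g_\delta^k$; organizing the three nested limits $k\to\infty$, $\delta\to 0$, $t\to\infty$ in this order is where the care is required. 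Once total-mass convergence is available, the signed upgrade in the second part is purely formal.
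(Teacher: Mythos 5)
Your proof is correct and follows essentially the same route as the source of this statement: the paper itself quotes Theorem \ref{thm:conv} from \cite{DDNL17b} without proof, and your argument reconstructs the original one there — canonical truncations $\max(u_j^k,-t)$, plurifine locality on the sets $\bigcap_j\{u_j^k>-t\}$, replacement of the $k$-dependent indicators by Lipschitz cutoffs $\prod_j\phi_{t,\delta}(u_j^k)$ to obtain quasi-continuous integrands converging in capacity, the Bedford--Taylor/Xing convergence theorem for uniformly bounded potentials (\cite{BT87}), and the limits taken in the order $k\to\infty$, $\delta\to 0$, $t\to\infty$, with the signed case reduced to $M\pm\chi$ once total masses converge. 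One remark: the $\liminf$ inequality is only true for \emph{nonnegative} bounded quasi-continuous $\chi$ (the original statement in \cite{DDNL17b} says ``positive,'' which the restatement here omits), so your restriction to $\chi\geq 0$ in the first part, with the signed upgrade deferred to the mass-convergence hypothesis, is exactly the correct reading.
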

We recall that a sequence $\{u_{k}\}_{k\in\mathbbm{N}}\subset PSH(X,\omega)$ converges in capacity to $u\in PSH(X,\omega)$ if for any $\delta>0$
$$
\mathrm{Cap}\big(\{|u_{k}-u|\geq \delta\}\big)\to 0
$$
where for any $B\subset X$ Borel set
$$
\mathrm{Cap}(B):=\sup\Big\{\int_{B}MA_{\omega}(u)\, : \, u\in PSH(X,\omega),\, -1\leq u\leq 0\Big\}
$$
It is also useful to recall that if $PSH(X,\omega)\ni u_{j}\searrow u\in PSH(X,\omega)$ decreasing, then $u_{j}\to u$ in capacity, and that the convergence in capacity implies the $L^{1}$-convergence (see \cite{GZ17} and reference therein).
\subsection{Potentials with $\psi$-relative full mass.}
If $u,v$ belongs to the same class $[\psi]$ then $V_{u}=V_{v}$, but there are also examples of $\omega$-psh functions $u,v$ such that $u\prec v$ and $V_{u}=V_{v}$. Thus $u\in PSH(X,\omega,\psi)$ is said to have \emph{$\psi$-relative full mass} if $V_{u}=V_{\psi}$, and the set of all $\omega$-psh functions with $\psi$-relative full mass is denoted with $\mathcal{E}(X,\omega,\psi)$ (see \cite{DDNL17b}).
\begin{thm}\emph{(Theorem 1.3, \cite{DDNL17b}).}
\label{thm:samesing}
Suppose $\psi\in PSH(X,\omega)$ such that $V_{\psi}>0$, and $u\in PSH(X,\omega,\psi)$. The following are equivalent:
\begin{itemize}
\item[(i)] $u\in \mathcal{E}(X,\omega,\psi)$;
\item[(ii)] $P_{\omega}[u](\psi)=\psi$;
\item[(iii)] $P_{\omega}[u]=P_{\omega}[\psi]$.
\end{itemize}
\end{thm}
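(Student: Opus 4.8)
The plan is to reduce all three equivalences to one pivotal analytic fact: the $\psi$-relative envelope preserves total mass, i.e. $V_{P_\omega[u](\psi)}=V_u$. Write $\phi:=P_\omega[u](\psi)$ and $\phi_C:=P_\omega(u+C,\psi)$, so that $\phi_C\nearrow\phi$ as $C\to+\infty$ and $\phi=(\sup_C\phi_C)^*$. The key observation is that each $\phi_C$ lies in the singularity class $[u]$: since $u\preccurlyeq\psi$ we have $u-C_0\le\min(u+C,\psi)$ for a suitable constant $C_0$ and all $C\geq 0$, whence $u-C_0\le\phi_C\le u+C$ and therefore $u\preccurlyeq\phi_C\preccurlyeq u$. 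As the non-pluripolar mass is constant on singularity classes, $V_{\phi_C}=V_u$ for every $C$. Letting $C\to\infty$, $\phi_C\to\phi$ in capacity (the increasing analogue of the decreasing case recalled before Theorem~\ref{thm:conv}), so Theorem~\ref{thm:conv} with $\chi\equiv1$ gives $V_\phi\le\liminf_C V_{\phi_C}=V_u$; combined with $u\preccurlyeq\phi$ (hence $V_u\le V_\phi$ by monotonicity of the mass) this yields $V_\phi=V_u$. I will call this \emph{Fact~1}.

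With Fact~1 the equivalence (i)$\Leftrightarrow$(ii) is short. Note first that $u\preccurlyeq\phi\le\psi$ always holds. If $\phi=\psi$, then $V_u=V_\phi=V_\psi$ by Fact~1, which is (i). Conversely, assume $V_u=V_\psi$, so $V_\phi=V_u=V_\psi$ by Fact~1. Applying Theorem~\ref{thm:3.8} in the form adapted to our situation ($\psi$ is the less singular function capping the more singular $u$) gives $\omega_\phi^n\le\mathbbm{1}_{\{\phi=\psi\}}\omega_\psi^n$; integrating and using $V_\phi=V_\psi$ forces $\int_{\{\phi<\psi\}}\omega_\psi^n=0$, i.e. $\psi\le\phi$ holds $\omega_\psi^n$-almost everywhere. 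Since $\psi\in\mathcal{E}(X,\omega,\psi)$ has full mass relative to itself, the domination principle upgrades this to $\psi\le\phi$ on all of $X$, and together with $\phi\le\psi$ we conclude $\phi=\psi$, which is (ii).

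For (iii) I would pass to the model envelopes $\eta:=P_\omega[u]$ and $\xi:=P_\omega[\psi]$. Fact~1 applied with the cap $0$ gives $V_\eta=V_u$ and $V_\xi=V_\psi$, while $u\preccurlyeq\psi$ forces $\eta\le\xi$. Thus (iii)$\Rightarrow$(i) is immediate, since $V_u=V_\eta=V_\xi=V_\psi$. For (i)$\Rightarrow$(iii) I need the rigidity statement that two model potentials with $\eta\le\xi$ and $V_\eta=V_\xi$ must coincide. Here the ``in particular'' part of Theorem~\ref{thm:3.8} concentrates $\omega_\eta^n$ on $\{\eta=0\}\subseteq\{\eta=\xi\}$, so $\omega_\eta^n(\{\eta<\xi\})=0$; a mass-balance argument on the contact set (using plurifine locality of the non-pluripolar product) together with $V_\eta=V_\xi$ then yields $\int_{\{\eta<\xi\}}\omega_\xi^n=0$, and a final application of the domination principle, now with $\xi$ as the full-mass potential, gives $\xi\le\eta$, hence $\eta=\xi$, i.e. (iii). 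The idempotency $P_\omega[P_\omega[\psi]]=P_\omega[\psi]$ recalled in the preliminaries keeps this consistent with (ii).

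The genuine analytic content, and the step I expect to be the main obstacle, is the \emph{domination principle}: the passage from an inequality holding almost everywhere with respect to a full-mass Monge--Amp\`ere measure to the inequality holding everywhere on $X$. Everything else is bookkeeping: Fact~1 makes the total masses trackable along the envelope construction, and Theorem~\ref{thm:3.8} localizes each Monge--Amp\`ere measure on the relevant contact set, but it is the domination principle that converts ``equal mass'' into ``equal potential''. I would either invoke it in the $\psi$-relative setting (as available in \cite{DDNL18a}) or derive it from the Comparison Principle (Proposition~\ref{prop:Comp}) applied to suitable perturbations. A secondary technical point requiring care is the contact-set mass identity underlying the model-rigidity step, which must be justified through the plurifine locality of the non-pluripolar product rather than by naive restriction to the Borel set $\{\eta=\xi\}$.
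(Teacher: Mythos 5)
Your argument is sound, but there is no proof in the paper to compare against: Theorem \ref{thm:samesing} is imported verbatim as Theorem 1.3 of \cite{DDNL17b}, and your reconstruction essentially retraces the strategy of the original proof there — mass invariance of the envelope ($V_{P_{\omega}[u](\psi)}=V_{u}$, your Fact 1), concentration of $MA_{\omega}\big(P_{\omega}[u](\psi)\big)$ on the contact set via Theorem \ref{thm:3.8}, and the domination principle (Proposition 3.11 of \cite{DDNL17b}, the same statement this paper invokes in Lemma \ref{lem:Properties} and Lemma \ref{lem:Volumes1}). The individual steps check out: the sandwich $u-C_{0}\le\phi_{C}\le u+C$ does place $\phi_{C}$ in $[u]$, so $V_{\phi_{C}}=V_{u}$ by \cite{WN17}; increasing a.e.\ convergence does imply convergence in capacity, so Theorem \ref{thm:conv} applies with $\chi\equiv 1$; and both applications of the domination principle use a dominating potential ($\psi$, resp.\ $\xi$) that trivially has full mass relative to itself, which is the legitimate pattern.

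Two remarks on the one step you flagged as delicate, the model rigidity in (i)$\Rightarrow$(iii). The contact-set inequality you need is $u\le v\Rightarrow \mathbbm{1}_{\{u=v\}}MA_{\omega}(u)\le \mathbbm{1}_{\{u=v\}}MA_{\omega}(v)$, which is indeed a known consequence of plurifine locality (the non-pluripolar maximum inequality) and is available in \cite{DDNL17b}; your instinct not to restrict naively to the Borel set $\{\eta=\xi\}$ is correct, since that set can carry mass for both measures and only the plurifine argument controls it. But you can bypass this ingredient entirely: once (i)$\Rightarrow$(ii) is established, apply it to the pair $(\eta,\xi)$ in place of $(u,\psi)$ — by your Fact 1, $V_{\eta}=V_{u}=V_{\psi}=V_{\xi}>0$ and $\eta\le\xi$, so $\eta\in\mathcal{E}(X,\omega,\xi)$ and hence $P_{\omega}[\eta](\xi)=\xi$; since $\xi\le 0$ and $P_{\omega}[\eta](0)=\eta$ by the idempotency $P_{\omega}[P_{\omega}[u]]=P_{\omega}[u]$ recalled in the preliminaries, this gives $\xi=P_{\omega}[\eta](\xi)\le P_{\omega}[\eta](0)=\eta\le\xi$, i.e.\ $\eta=\xi$. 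This shortcut removes the only ingredient of your proof not already quoted in this paper and makes the whole scheme rest on Fact 1, Theorem \ref{thm:3.8}, and the domination principle alone.
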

This result suggests that any function in the class $\mathcal{E}(X,\omega,\psi)$ is at most mildly more singular than $\psi$. Moreover this also implies that $\mathcal{E}(X,\omega,\psi_{1})\cap \mathcal{E}(X,\omega,\psi_{2})=\emptyset$ if $\psi_{1},\psi_{2}$ are two different model type envelopes with non zero total masses $V_{\psi_{1}}>0$, $V_{\psi_{2}}>0$.\\

For any $u_{1},\dots,u_{p}\in PSH(X,\omega)$, and for any $j_{1},\dots,j_{p}\in\mathbbm{N}$ such that $j_{1}+\cdots+j_{p}=n$ we introduce the notation
$$
MA_{\omega}(u_{1}^{j_{1}},\dots,u_{p}^{j_{p}}):=\omega_{u_{1}}^{j_{1}}\wedge\cdots\wedge \omega_{u_{p}}^{j_{p}}
$$
for the \emph{(mixed) non-pluripolar complex Monge-Amp\'ere measure} associated (it is a positive Borel measure) and we set $MA_{\omega}(u):=MA_{\omega}(u^{n})$. Note that if $V_{\psi}>0$ then the map
$$
\mathcal{E}(X,\omega,\psi)\ni u\to MA_{\omega}(u)/V_{\psi}
$$
has image contained in the set of non-pluripolar probability measures $\mathcal{M}(X)$. Moreover if $\psi$ is also a model type envelope then this map is surjective and it descends to a bijection on the space of all closed and positive $(1,1)$-currents with $\psi$-relative full mass, i.e. on $\mathcal{E}(X,\omega,\psi)/\mathbbm{R}$ (see Theorem $A$ in \cite{BEGZ10} when $\psi=0$, Theorem $4.28$ in \cite{DDNL17b} when $\psi$ has small unbounded locus, and Theorem $4.7$ in \cite{DDNL18b} for the general case). See the companion paper \cite{Tru20} and references therein for a further analysis of the Monge-Ampère operator.\newline

The following results will be essential in the sequel.
\begin{lem}
\label{lem:Volumes1}
Let $\{\psi_{k}\}_{k\in\mathbbm{N}}\subset \mathcal{M}^{+}$ be a monotone sequence converging a.e. to $\psi\in PSH(X,\omega)$. Then $\psi\in\mathcal{M}$ and $MA_{\omega}(\psi_{k})\to MA_{\omega}(\psi)$ weakly.
\end{lem}
\begin{proof}
Assume first $\psi_{k}\searrow \psi$. \\
As $\sup_{X}\psi=0$ we clearly have $\psi\leq P_{\omega}[\psi]=:\tilde{\psi}$ which implies $\psi=P_{\omega}[\psi]$ since $\tilde{\psi}\leq P_{\omega}[\psi_{k}]=\psi_{k}$ for any $k\in\mathbbm{N}$. For the second statement, we first observe that
\begin{equation}
\label{eqn:1Maj}
\int_{X}MA_{\omega}(\psi)\geq\limsup_{k\to \infty}\int_{\{\psi\geq -C\}}MA_{\omega}(\psi_{k})
\end{equation}
for any $C\in\mathbbm{R}$ fixed. Indeed, letting $u_{k}:=\max(\psi_{k},-C-1)$, we have $\int_{\{\psi\geq -C\}}MA_{\omega}(\psi_{k})=\int_{\{\psi\geq -C\}}MA_{\omega}(u_{k})$ as $\{\psi\geq -C\}\subset \{\psi_{k}> -C-1\}$ and $\mathbbm{1}_{\{\psi_{k}>-C-1\}}MA_{\omega}(\psi_{k})=\mathbbm{1}_{\{\psi_{k}>-C-1\}}MA_{\omega}(u_{k})$ by the locality in the plurifine topology of the non-pluripolar product (\cite{BEGZ10}). Similarly $\int_{\{\psi\geq -C\}}MA_{\omega}(u)=\int_{\{\psi\geq -C\}}MA_{\omega}(\psi)$ where $u:=\max(\psi,-C-1)$. Therefore (\ref{eqn:1Maj}) is an easy consequence of Corollary $3.3.b$ in \cite{BT87} (see also \cite{BT82}) as the latter gives
$$
\limsup_{k\to \infty}\int_{\{\psi\geq -C\}}MA_{\omega}(u_{k})\leq \int_{\{\psi\geq -C\}}MA_{\omega}(u)
$$
observing that $u_{k}\searrow u$, $|u_{k}|\leq C+1$ and that $\{\psi\geq -C\}$ is a plurifine closed set.\\
On the other hand, by Theorem \ref{thm:3.8} $MA_{\omega}(\psi_{k})\leq \mathbbm{1}_{\{\psi_{k}=0\}}MA_{\omega}(0)$ for any $k\in\mathbbm{N}$. Thus for any $C\geq 0$
\begin{equation}
\label{eqn:2Maj}
\limsup_{k\to \infty}\int_{\{\psi<-C\}}MA_{\omega}(\psi_{k})\leq \limsup_{k\to\infty}\int_{\{\psi<-C\}\cap \{\psi_{k}=0\}}MA_{\omega}(0)=0,
\end{equation}
where the last equality follows from $\bigcap_{k\in\mathbbm{N}}\{\psi_{k}=0\}=\{\psi=0\}$ since $\psi_{k}\searrow \psi$ and $\psi,\psi_{k}\leq 0$. Hence combining (\ref{eqn:1Maj}) and (\ref{eqn:2Maj}) we obtain
$$
\int_{X}MA_{\omega}(\psi)\geq \limsup_{k\to \infty}\int_{X}MA_{\omega}(\psi_{k}),
$$
and Theorem \ref{thm:conv} implies that $MA_{\omega}(\psi_{k})\to MA_{\omega}(\psi)$ weakly.\\
Assume now $\psi_{k}\nearrow \psi$ almost everywhere.\\
Again by Theorem \ref{thm:conv} we immediately get $MA_{\omega}(\psi_{k})\to MA_{\omega}(\psi)$ weakly since $\psi_{k}\to\psi$ in capacity. Thus to conclude the proof it remains to prove that $\psi\geq P_{\omega}[\psi]$ as $\psi\leq P_{\omega}[\psi]$. \\
We note that $MA_{\omega}(\psi)\leq \mathbbm{1}_{\{\psi=0\}} MA_{\omega}(0)$ since $MA_{\omega}(\psi_{k})\leq\mathbbm{1}_{\{\psi_{k}=0\}}MA_{\omega}(0)$ for any $k\in\mathbbm{N}$ (Theorem \ref{thm:3.8}). Therefore
\begin{gather*}
0\leq \int_{X}\big(P_{\omega}[\psi]-\psi\big)MA_{\omega}(\psi)\leq \int_{\{\psi=0\}}(P_{\omega}[\psi]-\psi) MA_{\omega}(0)=0
\end{gather*}
where the last equality follows from $\psi\leq P_{\omega}[\psi]\leq 0$. Hence, as by hypothesis $\int_{X}MA_{\omega}(\psi)>0$, the domination principle (Proposition $3.11$ in \cite{DDNL17b}) gives $P_{\omega}[\psi]\leq \psi$, i.e. $\psi\in\mathcal{M}$.
\end{proof}
As a consequence of Lemma \ref{lem:Volumes1} we get that $\mathcal{\overline{A}}\subset\mathcal{M}$. Indeed since $\mathcal{A}$ is totally ordered, any Cauchy sequence $\{\psi_{k}\}_{k\in\mathbbm{N}}$ admits a subsequence monotonically converging a.e. to $\big(\lim_{k\to \infty}\psi_{k}\big)^{*}$. We also note that $\mathcal{\overline{A}}$ remains totally ordered.
\begin{prop}
\label{lem:Key}
Let $\{\psi_{k}\}_{k\in\mathbbm{N}}\subset \mathcal{M}^{+}$ totally ordered such that $\psi_{k}\to \psi\in\mathcal{M}$ monotonically and let $\{u_{1,k},u_{2,k}\}_{k\in\mathbbm{N}}$ be two sequences, $u_{1,k},u_{2,k}\in\mathcal{E}(X,\omega,\psi_{k})$ for any $k\in\mathbbm{N}$, converging in capacity respectively to $u_{1},u_{2}\in \mathcal{E}(X,\omega,\psi)$. Then for any $j=0,\dots,n$,
$$
MA_{\omega}(u_{1,k}^{j},u_{2,k}^{n-j})\to MA_{\omega}(u_{1}^{j},u_{2}^{n-j})
$$
weakly. Moreover, letting $\{f_{k}\}_{k\in\mathbbm{N}},f$ be uniformly bounded quasi-continuous functions such that $f_{k}\to f$ in capacity and assuming that there exists a uniform constant $C$ such that $u_{1,k},u_{2,k}\geq \psi_{k}-C$ for any $k\in\mathbbm{N}$, the weak convergence
\begin{equation}
\label{eqn:Label}
f_{k} MA_{\omega}(u_{1,k}^{j},u_{2,k}^{n-j})\to f MA_{\omega}(u_{1}^{j},u_{2}^{n-j})
\end{equation}
holds for any $j=0,\dots,n$.
\end{prop}
In the case of small unbounded loci, the proof of the weak convergence (\ref{eqn:Label}) in Proposition \ref{lem:Key} is an adaptation of the proof of Lemma $4.1$ in \cite{DDNL17b} (and it is a particular case of Theorem $2.2.$ in \cite{X19a}). However the general case is more complicated and the different proof proposed below is based on the following Lemma. We recall that the \emph{relative Monge-Ampère capacity} for $\psi\in\mathcal{M}^{+}$ is defined as
$$
\mathrm{Cap}_{\psi}(B):=\sup\Big\{\int_{B}MA_{\omega}(u)\, : \, u\in PSH(X,\omega), \psi-1\leq u\leq\psi\Big\}
$$
for any Borel set $B\subset X$ (see \cite{DDNL17b}).
\begin{lem}
\label{lem:AfterReferee}
Let $\mathcal{A}\subset \mathcal{M}^{+}$ a totally ordered family such that the model type envelope $\psi_{\min}:=\inf_{\psi\in\mathcal{A}}\psi$ belongs to $\mathcal{M}^{+}$. Then there exists a uniform constant $A>0$ such that
\begin{equation}
\label{eqn:Totally}
\mathrm{Cap}_{\psi}(B)\leq A\, \mathrm{Cap}(B)^{1/n^{2}}
\end{equation}
for any Borel set $B\subset X$ and for any $\psi\in\mathcal{M}$.
\end{lem}
Note that, as the partial order $\preccurlyeq$ coincides with $\leq $ on $\mathcal{M}^{+}$, $\inf_{\mathcal{A}}\psi\in \mathcal{M}$ but a priori it can have zero total Monge-Ampère mass.
\begin{proof}
This Lemma essentially follows from Lemma $3.2$ in \cite{Lu20}. However, as we need to keep track of the constants, we present the details of the proof as a courtesy to the reader.\newline
As the (relative) Monge-Ampère capacities are inner regular (Lemma $4.2$ in \cite{DDNL17b}, \cite{GZ17}) and any non-pluripolar set has capacity $0$, it is enough to prove (\ref{eqn:Totally}) for a general non-pluripolar compact set $B=K$. We recall the definition of the \emph{relative extremal function}
$$
V_{K,\psi}:=\sup\{u\in PSH(X,\omega)\, : \, u\preccurlyeq\psi, u\leq \psi \, \mathrm{on} \, K\}
$$
and we define $M_{K,\psi}:=\sup_{X}V_{K,\psi}^{*}$ where as usual the star is for the upper semicontinuous regularization.\newline
If $M_{K,\psi_{\min}}\geq 1$ then combining Lemma $3.2$ in \cite{Lu20}, Lemma $3.9$ in \cite{DDNL18b} and the trivial inequality $M_{K,\psi}\geq M_{K,\psi_{\min}}$ we obtain
$$
\mathrm{Cap}_{\psi}(K)\leq \frac{C}{M_{K,\psi}}\leq \frac{C}{M_{K,\psi_{\min}}}\leq \frac{C}{V_{\psi_{\min}}^{1/n}}\mathrm{Cap}_{\psi_{\min}}(K)^{1/n}
$$
for a uniform constant $C>0$ independent of $\psi\in \mathcal{A}$ and of $K\subset X$. \newline
Instead if $M_{K,\psi_{\min}}<1$, $V_{K,\psi_{\min}}^{*}-1$ is a $\omega$-psh candidate for the $\psi_{\min}$-relative Monge-Ampère capacity, hence
$$
V_{\psi_{\min}}=\int_{X}MA_{\omega}(V_{K,\psi_{\min}}^{*})=\int_{K}MA_{\omega}(V_{K,\psi_{\min}}^{*})\leq \mathrm{Cap}_{\psi_{\min}}(K) 
$$
where the second equality is the content of Lemma $3.5$ in \cite{DDNL18b}. Therefore, as $\mathrm{Cap}_{\psi}(K)\leq V_{\psi}$ by definition, we have
$$
\mathrm{Cap}_{\psi}(K)\leq \frac{V_{\psi}}{V_{\psi_{\min}}}\mathrm{Cap}_{\psi_{\min}}(K)\leq \frac{V_{\psi}}{V_{\psi_{\min}}^{1/n}}\mathrm{Cap}_{\psi_{\min}}(K)^{1/n}
$$
where clearly the last inequality follows from $\mathrm{Cap}_{\psi_{\min}}(K)\leq V_{\psi_{\min}}$.\newline
Finally, the inequality $\mathrm{Cap}_{\psi_{\min}}(K)\leq C'\mathrm{Cap}(K)^{1/n}$ (see again Lemma $3.2$ in \cite{Lu20}, where the constant $C'$ depends on $X,\omega,\psi_{\min}$) concludes the proof.
\end{proof}
\begin{proof}[Proof of Proposition \ref{lem:Key}]
$MA_{\omega}(u_{1,k}^{j},u_{2,k}^{n-j})\to MA_{\omega}(u_{1}^{j},u_{2}^{n-j})$ weakly as a consequence of Theorem \ref{thm:conv} and Lemma \ref{lem:Volumes1}.\\
To prove $(4)$ for $j=0,\dots,n$ fixed, we define $T_{k}:=MA_{\omega}\big(u_{1,k}^{j},u_{2,k}^{n-j}\big)$ and $T:=MA_{\omega}\big(u_{1}^{j},u_{2}^{n-j}\big)$. If $V_{\psi}=0$ then it is immediate to observe that $f_{k} T_{k}$ converges weakly to $0$, thus we may assume $\psi\in\mathcal{M}^{+}$. By Hartogs' Lemma we can also assume that $\sup_{X}u_{1,k}=\sup_{X}u_{2,k}=0$ for any $k\in\mathbbm{N}$, which clearly implies $u_{1,k},u_{2,k}\leq \psi_{k}$ for any $k\in\mathbbm{N}$ as $u_{j,k}-\sup_{X}u_{j,k}\leq P_{\omega}[\psi_{k}]=\psi_{k}$ (see also Lemma \ref{lem:Bound} below). In particular, by a simple calculation we obtain
\begin{equation}
    \label{eqn:Uffy}
    T_{k}\leq 2^{n}MA_{\omega}\Big(\frac{u_{1,k}+u_{2,k}}{2}\Big)\leq (2C)^{n}MA_{\omega}\Big(\frac{u_{1,k}+u_{2,k}}{2C}+\big(1-\frac{1}{C}\big)\psi_{k}\Big)\leq (2C)^{n}\mathrm{Cap}_{\psi_{k}}
\end{equation}
where the last inequality follows from the hypothesis $u_{1,k},u_{2,k}\geq \psi_{k}-C$.\newline
Then, we fix $\chi$ continuous function and we start with
\begin{equation}
    \label{eqn:Uffy2}
    \Bigg|\int_{X}\chi\big(f_{k}T_{k}-fT\big)\Bigg|\leq \Bigg|\int_{X}\chi\big(f_{k}-f\big)T_{k}\Bigg|+\Bigg|\int_{X}\chi f\big(T_{k}-T\big)\Bigg|.
\end{equation}
For the first term on the right-hand side of (\ref{eqn:Uffy2}) we fix $\delta>0$ and observe that (\ref{eqn:Uffy}) implies
\begin{multline*}
    \Bigg|\int_{X}\chi\big(f_{k}-f\big)T_{k}\Bigg|\leq ||\chi||_{\infty}\int_{\{|f_{k}-f|\geq \delta\}}|f_{k}-f|T_{k}+\delta||\chi||_{\infty}\int_{\{|f_{k}-f|<\delta\}}T_{k}\leq\\\leq (||f_{k}||_{\infty}+||f||_{\infty})||\chi||_{\infty}(2C)^{n}\mathrm{Cap}_{\psi_{k}}\big(\{|f_{k}-f|\geq \delta\}\big)+\delta||\chi||_{\infty}V_{\psi_{k}}\leq\\
    \leq C_{1}\mathrm{Cap}\big(\{|f_{k}-f|\geq \delta\}\big)^{1/n^{2}}+\delta C_{2} 
\end{multline*}
where in the last inequality we used Lemma \ref{lem:AfterReferee} and we denote by $C_{1}, C_{2}$ two uniform constants. Hence, since $f_{k}\to f$ in capacity, we get that the first term in (\ref{eqn:Uffy2}) goes to $0$ as $k\to \infty$.\newline
For the second term in (\ref{eqn:Uffy2}) we fix $\epsilon>0$, $U_{\epsilon}$ open set of $X$ such that $\mathrm{Cap}(U_{\epsilon})<\epsilon$ and $g$ continuous function such that $g=f$ on $X\setminus U_{\epsilon}$. Thus
\begin{multline}
\label{eqn:Uffy3}
    \Bigg|\int_{X}\chi f \big(T_{k}-T\big)\Bigg|\leq \Bigg|\int_{X}\chi g (T_{k}-T)+\int_{U_{\epsilon}}\chi (f-g)(T_{k}-T)\Bigg|\leq \\
    \leq \Bigg|\int_{X}\chi g (T_{k}-T)\Bigg|+||\chi||_{\infty}\big(||f||_{\infty}+||g||_{\infty}\big) \int_{U_{\epsilon}}(T_{k}+T)\leq \\
    \leq \Bigg|\int_{X}\chi g (T_{k}-T)\Bigg|+ C_{3} \mathrm{Cap}(U_{\epsilon})^{1/n^{2}}\leq \Bigg|\int_{X}\chi g (T_{k}-T)\Bigg|+C_{3}\epsilon^{1/n^{2}}
\end{multline}
where we again combined Lemma \ref{lem:AfterReferee} with (\ref{eqn:Uffy}). The proof of (\ref{eqn:Label}) concludes observing that the first term on the right-hand side of (\ref{eqn:Uffy3}) goes to $0$ as $k\to +\infty$ by the weak convergence $T_{k}\to T$.
\end{proof}
\subsection{The $\psi$-relative finite energy class $\mathcal{E}^{1}(X,\omega,\psi)$.}
From now until section \ref{sec:5} we will assume $\psi$ model type envelope and $V_{\psi}>0$, i.e. $\psi\in\mathcal{M}^{+}$ with the notations of the Introduction.
\begin{defn}\emph{(\cite{DDNL17b})}
The $\psi$-relative energy functional $E_{\psi}:PSH(X,\omega,\psi)\to \mathbbm{R}\cup\{-\infty\}$ is defined as
$$
E_{\psi}(u):=\frac{1}{(n+1)}\sum_{j=0}^{n}\int_{X}(u-\psi)MA_{\omega}(u^{j},\psi^{n-j})
$$
if $u$ has $\psi$-relative minimal singularities, and as
$$
E_{\psi}(u):=\inf\{E_{\psi}(v)\, : \, v\in\mathcal{E}(X,\omega,\psi) \, \mathrm{with}\, \psi\mathrm{-} \, \mathrm{relative} \, \mathrm{minimal}\, \mathrm{singularities}, v\geq u\}
$$
otherwise.
\end{defn}
When $\psi=0$ this functional is, up to a multiplicative constant, the \emph{Aubin-Mabuchi energy functional}, also called \emph{Monge-Amp\'ere energy} (see \cite{Aub84}, \cite{Mab86}).
\begin{rem}
\label{rem:Xia}
\emph{The authors in \cite{DDNL17b} introduced this functional and proved some of its properties in their section $\S 4.2$ assuming $\psi$ with small unbounded locus. Indeed, to prove their Theorem $4.10$ (from which all the other results of their section $\S 4.2$ follow) they needed an integration by parts formula and the weak convergence property for the Monge-Ampère measures described by their Lemma $4.1$. However, the integration by parts formula showed by Xia in \cite{X19a} (see also Theorem $1.2$ in \cite{Lu20}) and the weak convergence described in the Proposition \ref{lem:Key} allows to extend Theorem $4.10$ in \cite{DDNL17b} to the general setting of not necessarily small unbounded loci. As an immediate consequence all the results in section $\S 4.2$ in \cite{DDNL17b} extend to the general setting with no changes in the proofs, and some of this results will be recalled in the sequel.}
\end{rem}
By Lemma $4.12$ in \cite{DDNL17b} $E_{\psi}(u)=\lim_{j\to \infty} E_{\psi}(u_{j})$ for arbitrary $u\in PSH(X,\omega,\psi)$ where $u_{j}:=\max(u,\psi-j)$ are the \emph{$\psi$-relative canonical approximants}. Moreover, following the notation in \cite{DDNL17b}, we recall that
$$
\mathcal{E}^{1}(X,\omega,\psi):=\{u\in\mathcal{E}(X,\omega,\psi)\, :\, E_{\psi}(u)>-\infty\}
$$
and that $E_{\psi}(u)>-\infty$ is equivalent to $V_{u}=V_{\psi}$ and $\int_{X}(u-\psi)MA_{\omega}(u)>-\infty$ (compare also Lemma $4.13$ in \cite{DDNL17b} and Proposition $2.11$ in \cite{BEGZ10}).
\begin{prop}\emph{(Section $4.2$ in \cite{DDNL17b})}
\label{prop:Decr}
The $\psi$-relative energy functional is non-decreasing, concave along affine curves and continuous along decreasing sequences. 
\end{prop}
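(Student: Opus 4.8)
The plan is to prove all three properties first on the subclass of potentials with $\psi$-relative minimal singularities, where $E_\psi$ is given by the explicit polarized formula, and then to propagate each property to an arbitrary $u\in PSH(X,\omega,\psi)$ through the canonical approximants $u^N:=\max(u,\psi-N)$, for which continuity $E_\psi(u)=\lim_N E_\psi(u^N)$ is already available (Lemma $4.12$ of \cite{DDNL17b}, recalled above). The computational heart is the cocycle formula
$$
E_\psi(u)-E_\psi(v)=\frac{1}{n+1}\sum_{j=0}^{n}\int_X (u-v)\,MA_\omega(u^j,v^{n-j})
$$
for $u,v$ with $\psi$-relative minimal singularities, together with its infinitesimal form $\frac{d}{dt}E_\psi(u_t)=\int_X\dot u_t\,MA_\omega(u_t)$ along an affine segment $u_t=(1-t)u+tv$; both come from a telescoping integration by parts, which for potentials that need not have small unbounded locus is exactly where Xia's integration-by-parts formula (cited in the Remark above) enters. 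Granting the cocycle formula, monotonicity on the minimal-singularity class is immediate: if $u\geq v$ then $u-v\geq 0$ and every measure $MA_\omega(u^j,v^{n-j})$ is positive, so $E_\psi(u)\geq E_\psi(v)$. For arbitrary $u\leq v$ in $PSH(X,\omega,\psi)$ I would use the defining infimum: every minimal-singularity $w\geq v$ also satisfies $w\geq u$, so $E_\psi(u)$ is an infimum over a larger family, giving $E_\psi(u)\leq E_\psi(v)$; consistency of the two definitions on the minimal-singularity class follows from the monotonicity just obtained.

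For continuity along a decreasing sequence $u_k\searrow u$, monotonicity shows $E_\psi(u_k)$ decreases to a limit $L\geq E_\psi(u)$, so only $L\leq E_\psi(u)$ remains. I would first settle the minimal-singularity case: there $u_k\to u$ in capacity, the masses $V_{u_k}=V_\psi$ are constant, so Theorem \ref{thm:conv} gives $MA_\omega(u_k^j,\psi^{n-j})\to MA_\omega(u^j,\psi^{n-j})$ weakly, and combining this with the monotone, uniformly bounded, quasi-continuous integrands $u_k-\psi\searrow u-\psi$ (via the $\liminf$ inequality applied to $\pm(u_k-\psi)$) yields $E_\psi(u_k)\to E_\psi(u)$. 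The general case is then a diagonal argument on $u_k^N:=\max(u_k,\psi-N)$: for fixed $N$ these are minimal-singularity and decrease in $k$ to $u^N$, so $E_\psi(u_k^N)\to E_\psi(u^N)$ as $k\to\infty$ by the case just settled; since $u_k\leq u_k^N$, monotonicity gives $E_\psi(u_k)\leq E_\psi(u_k^N)$, whence $L\leq \lim_k E_\psi(u_k^N)=E_\psi(u^N)$, and letting $N\to\infty$ with Lemma $4.12$ gives $L\leq E_\psi(u)$.

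Finally, for concavity along affine curves I would differentiate the formula $\frac{d}{dt}E_\psi(u_t)=\int_X\dot u_t\,MA_\omega(u_t)$ once more along $u_t=(1-t)u_0+tu_1$, so that $\dot u_t=u_1-u_0$ is constant and $\ddot u_t=0$:
$$
\frac{d^2}{dt^2}E_\psi(u_t)=n\int_X \dot u_t\,dd^c\dot u_t\wedge\omega_{u_t}^{n-1}=-n\int_X d\dot u_t\wedge d^c\dot u_t\wedge\omega_{u_t}^{n-1}\leq 0,
$$
again by integration by parts, which proves concavity on the minimal-singularity class. To extend to arbitrary $u_0,u_1$, I would set $v_t^N:=(1-t)\max(u_0,\psi-N)+t\max(u_1,\psi-N)$, which are minimal-singularity and satisfy $E_\psi(v_t^N)\geq(1-t)E_\psi(u_0^N)+tE_\psi(u_1^N)$. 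As $N\to\infty$ the right-hand side tends to $(1-t)E_\psi(u_0)+tE_\psi(u_1)$ by Lemma $4.12$, while $v_t^N\searrow u_t:=(1-t)u_0+tu_1$ decreasingly, so the left-hand side tends to $E_\psi(u_t)$ by the continuity along decreasing sequences established in the previous step; this gives the desired inequality $E_\psi(u_t)\geq(1-t)E_\psi(u_0)+tE_\psi(u_1)$.

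The main obstacle throughout is the validity of the two differentiation/integration-by-parts identities for $\psi$-relative minimal-singularity potentials that may carry unbounded locus; once these are secured via Xia's formula, monotonicity and concavity on the minimal-singularity class become formal, and the monotone approximation scheme above propagates all three properties to $PSH(X,\omega,\psi)$. A secondary point requiring care is the order of deduction: continuity along decreasing sequences must be derived using only monotonicity, the minimal-singularity case, and Lemma $4.12$, so that the concavity extension may legitimately invoke it.
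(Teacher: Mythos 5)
The paper itself contains no proof of Proposition \ref{prop:Decr}: it is quoted from \cite{DDNL17b}, with the adjacent Remark noting that Xia's integration-by-parts formula \cite{X19a} is precisely what allows the small-unbounded-locus proofs to carry over to general $\psi$. Your argument is a correct reconstruction of exactly that route — the cocycle formula and the second-derivative computation $\frac{d^{2}}{dt^{2}}E_{\psi}(u_{t})=-n\int_{X}d\dot{u}_{t}\wedge d^{c}\dot{u}_{t}\wedge\omega_{u_{t}}^{n-1}\leq 0$ on the minimal-singularity class (justified via Xia's formula), then propagation to all of $PSH(X,\omega,\psi)$ through the canonical approximants $\max(u,\psi-N)$ using Lemma $4.12$ of \cite{DDNL17b} and Theorem \ref{thm:conv}, with the order of deduction (continuity before the concavity extension) correctly observed — so it matches the proof the paper is implicitly relying on.
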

Moreover we also have the following properties:
\begin{prop}
\label{prop:4.10}
Suppose $u,v\in \mathcal{E}^{1}(X,\omega,\psi)$. Then:
\begin{itemize}
\item[i)] $E_{\psi}(u)-E_{\psi}(v)=\frac{1}{(n+1)}\sum_{j=0}^{n}\int_{X}(u-v)MA_{\omega}(u^{j},v^{n-j})$;
\item[ii)] if $u\leq v$ then $\int_{X}(u-v)MA_{\omega}(u)\leq E_{\psi}(u)-E_{\psi}(v)\leq \frac{1}{n+1}\int_{X}(u-v)MA_{\omega}(u)$;
\item[iii)] $\int_{X}(u-v)MA_{\omega}(u)\leq E_{\psi}(u)-E_{\psi}(v)\leq \int_{X}(u-v)MA_{\omega}(v)$.
\end{itemize}
\end{prop}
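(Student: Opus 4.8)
I would prove the mixed difference formula i) first, since ii) and iii) both follow from it by a single integration-by-parts monotonicity. For i) I treat the case of $\psi$-relative minimal singularities by a derivative computation along the affine segment and then pass to the general class $\mathcal{E}^1(X,\omega,\psi)$ by the canonical approximants. So, assume first that $u,v$ have $\psi$-relative minimal singularities, so that $u-v$ is globally bounded and each non-pluripolar product below is an honest finite measure. Put $u_t:=(1-t)v+tu$ for $t\in[0,1]$; this again has $\psi$-relative minimal singularities. Differentiating the defining sum of $E_\psi(u_t)$, the contributions coming from differentiating the measures $\omega_{u_t}^j\wedge\omega_\psi^{n-j}$ can be symmetrized by the integration-by-parts formula of Xia \cite{X19a} (valid precisely in this $\psi$-relative setting, cf. the Remark following the definition of $E_\psi$); the usual telescoping collapses everything to
$$\frac{d}{dt}E_\psi(u_t)=\int_X(u-v)\,MA_\omega(u_t).$$
Integrating over $[0,1]$, expanding $MA_\omega(u_t)=\big((1-t)\omega_v+t\omega_u\big)^n=\sum_{j=0}^n\binom{n}{j}t^j(1-t)^{n-j}\,\omega_u^j\wedge\omega_v^{n-j}$ and using $\binom{n}{j}\int_0^1 t^j(1-t)^{n-j}\,dt=\tfrac{1}{n+1}$ gives i) in the bounded case.

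\textbf{Extension of i) to $\mathcal{E}^1(X,\omega,\psi)$.} I would use the canonical approximants $u_k:=\max(u,\psi-k)\searrow u$ and $v_k:=\max(v,\psi-k)\searrow v$, which have $\psi$-relative minimal singularities and still lie in $\mathcal{E}(X,\omega,\psi)$ (by $V_u\le V_{u_k}\le V_\psi$ and its mixed analogue recalled in Section \ref{sec:Preli}). The left-hand side of i) for $u_k,v_k$ converges to $E_\psi(u)-E_\psi(v)$ because $E_\psi$ is continuous along decreasing sequences (Proposition \ref{prop:Decr}), and the mixed measures $MA_\omega(u_k^j,v_k^{n-j})$ converge to $MA_\omega(u^j,v^{n-j})$ by Theorem \ref{thm:conv}, their total masses being constant equal to $V_\psi$. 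The genuine obstacle is that the integrand $u_k-v_k$ is unbounded in $k$, so weak convergence of measures alone does not let me pass to the limit in $\int_X(u_k-v_k)\,MA_\omega(u_k^j,v_k^{n-j})$. I would handle this by truncation: apply the lower-semicontinuity half of Theorem \ref{thm:conv} to the bounded quasi-continuous functions $\max(u-v,-M)$, and use the finite-energy hypothesis (equivalently the a priori bounded versions of ii)--iii) already proved) to make the contribution of $\{u-v<-M\}$ uniformly small, then let $M\to\infty$. This is the step the introduction flags as ``easily adapted'' from the absolute big-case treatment in \cite{DDNL18a}.

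\textbf{Deduction of ii) and iii).} Once i) holds on $\mathcal{E}^1(X,\omega,\psi)$, both estimates reduce to the monotonicity of $j\mapsto\int_X(u-v)\,MA_\omega(u^j,v^{n-j})$. Writing $\omega_u-\omega_v=dd^c(u-v)$ and integrating by parts,
$$\int_X(u-v)\,\omega_u^{j}\wedge\omega_v^{n-j}-\int_X(u-v)\,\omega_u^{j-1}\wedge\omega_v^{n-j+1}=-\int_X d(u-v)\wedge d^c(u-v)\wedge\omega_u^{j-1}\wedge\omega_v^{n-j}\le 0,$$
so every term lies between the $j=n$ term $\int_X(u-v)MA_\omega(u)$ and the $j=0$ term $\int_X(u-v)MA_\omega(v)$. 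Averaging over $j=0,\dots,n$ and inserting i) gives iii), with no sign hypothesis on $u-v$. For ii), when $u\le v$ each term $\int_X(u-v)\,\omega_u^j\wedge\omega_v^{n-j}$ is $\le 0$; discarding all but the $j=n$ term bounds the sum above by $\int_X(u-v)MA_\omega(u)$, while the monotonicity bounds it below by $(n+1)\int_X(u-v)MA_\omega(u)$, and dividing by $n+1$ yields the stated two-sided estimate. Throughout, finiteness of these mixed integrals for $u,v\in\mathcal{E}^1(X,\omega,\psi)$ is guaranteed by the finite-energy condition, which is what makes i)--iii) well-posed on the whole class.
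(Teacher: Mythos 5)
Your proposal is correct and follows essentially the same route as the paper: the paper disposes of the $\psi$-relative minimal singularities case by citing Theorem 4.10 of \cite{DDNL17b} (whose proof is exactly your affine-segment derivative computation together with the term-monotonicity deduction of ii) and iii)), and handles the general case by adapting Proposition 2.2 of \cite{DDNL18a} via the canonical approximants, the comparison principle (Proposition \ref{prop:Comp}) and the tail estimate $j\,MA_{\omega}\big(\max(w,\psi-j)\big)\big(\{w\leq \psi-j\}\big)\to 0$, which plays precisely the role of your uniform-integrability/truncation step. The only detail to tighten is that $\max(u-v,-M)$ need not be bounded above (since $v$ may be more singular than $u$ on part of $X$), so the truncation in your limit passage must be two-sided before invoking Theorem \ref{thm:conv}.
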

\begin{proof}
If $u,v$ have $\psi$-relative minimal singularities then it is part of the content of Theorem $4.10$ in \cite{DDNL17b} (which, as specified in Remark \ref{rem:Xia}, can be immediately extended to the general case of not small unbounded loci), while in the general case the proof is the same to that of Proposition $2.2$ in \cite{DDNL18a} replacing $V_{\theta}$ with $\psi$, using the Comparison Principle of Proposition \ref{prop:Comp} and the fact that for any $w\in \mathcal{E}^{1}(X,\omega,\psi)$
\begin{multline*}
\lim_{j\to\infty}j\int_{\{w\leq \psi-j\}}MA_{\omega}(\max(w,\psi-j))=\lim_{j\to \infty}j\int_{\{w\leq \psi-j\}}MA_{\omega}(w)\leq\lim_{j\to\infty}\int_{\{w\leq \psi-j\}}(\psi-w)MA_{\omega}(w)=0
\end{multline*}
since $\int_{X}(w-\psi)MA_{\omega}(w)>-\infty$.
\end{proof}
We conclude the subsection showing that the envelope operator $P_{\omega}(\cdot,\cdot)$ is an operator of the class $\mathcal{E}^{1}(X,\omega,\psi)$ (in the absolute setting, this problem was addressed by Darvas, see Corollary $3.5$ in \cite{Dar15}).
\begin{prop}
\label{prop:Proj}
Assume $u,v\in \mathcal{E}^{1}(X,\omega,\psi)$. Then $P_{\omega}(u,v)\in\mathcal{E}^{1}(X,\omega,\psi)$. Moreover if $\{u_{j},v_{j}\}_{j\in\mathbbm{N}}\subset \mathcal{E}^{1}(X,\omega,\psi)$ decreasing respectively to $u,v\in\mathcal{E}^{1}(X,\omega,\psi)$, then $E_{\psi}\big(P_{\omega}(u_{j},v_{j})\big)$ $\searrow E_{\psi}\big(P_{\omega}(u,v)\big)$.
\end{prop}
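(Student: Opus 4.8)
The plan is to deduce both assertions from a single elementary fact about rooftop envelopes together with the continuity of $E_{\psi}$ along decreasing sequences. First I would record the \emph{envelope continuity lemma}: if $a_{j}\searrow a$ and $b_{j}\searrow b$ in $PSH(X,\omega)$, then $P_{\omega}(a_{j},b_{j})\searrow P_{\omega}(a,b)$. This is immediate, since $P_{\omega}(a_{j},b_{j})$ is decreasing and bounded below by $P_{\omega}(a,b)$, so its limit $Q$ is $\omega$-psh (or $\equiv-\infty$) and satisfies $Q\leq\min(a,b)$, whence $Q\leq P_{\omega}(a,b)$ and equality holds. Granting the first part of the statement, the \emph{moreover} then follows at once: for $u_{j}\searrow u$ and $v_{j}\searrow v$ in $\mathcal{E}^{1}(X,\omega,\psi)$ the functions $P_{\omega}(u_{j},v_{j})$ decrease to $P_{\omega}(u,v)$, all lie in $\mathcal{E}^{1}(X,\omega,\psi)$ by the first part, and $E_{\psi}$ is continuous along decreasing sequences (Proposition \ref{prop:Decr}); since $E_{\psi}$ is non-decreasing the sequence $E_{\psi}\big(P_{\omega}(u_{j},v_{j})\big)$ decreases, so it decreases to $E_{\psi}\big(P_{\omega}(u,v)\big)$.

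For the first part, set $P:=P_{\omega}(u,v)$ and note $P\leq u\preccurlyeq\psi$, so $P\in PSH(X,\omega,\psi)$; by the characterization recalled just before Proposition \ref{prop:Decr} it suffices to prove $E_{\psi}(P)>-\infty$, because this already forces $V_{P}=V_{\psi}$ and hence $P\in\mathcal{E}(X,\omega,\psi)$. I would approximate by the $\psi$-relative canonical approximants $u_{j}:=\max(u,\psi-j)$ and $v_{j}:=\max(v,\psi-j)$, which have $\psi$-relative minimal singularities and decrease to $u,v$. Writing $P_{j}:=P_{\omega}(u_{j},v_{j})$, the bounds $\psi-j\leq P_{j}\leq u_{j}\leq\psi+C_{u}$ (where $u\leq\psi+C_{u}$, $C_{u}\geq0$) show that $P_{j}$ has $\psi$-relative minimal singularities, hence $P_{j}\in\mathcal{E}^{1}(X,\omega,\psi)$; and by the envelope continuity lemma $P_{j}\searrow P$.

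The heart of the argument is a lower bound for $E_{\psi}(P_{j})$ that is uniform in $j$. Since $P_{j}\leq u_{j}$ with both in $\mathcal{E}^{1}(X,\omega,\psi)$, Proposition \ref{prop:4.10}(ii) gives $E_{\psi}(P_{j})\geq E_{\psi}(u_{j})-\int_{X}(u_{j}-P_{j})MA_{\omega}(P_{j})$, and $E_{\psi}(u_{j})\geq E_{\psi}(u)$ by monotonicity. To control the integral I would invoke Lemma \ref{lem:PMass}: the measure $MA_{\omega}(P_{j})$ is dominated by $\mathbbm{1}_{\{P_{j}=u_{j}\}}MA_{\omega}(u_{j})+\mathbbm{1}_{\{P_{j}=v_{j}\}}MA_{\omega}(v_{j})$, and as $u_{j}-P_{j}$ vanishes on $\{P_{j}=u_{j}\}$ and equals $u_{j}-v_{j}\geq0$ on $\{P_{j}=v_{j}\}$, one obtains $\int_{X}(u_{j}-P_{j})MA_{\omega}(P_{j})\leq\int_{X}(u_{j}-v_{j})^{+}MA_{\omega}(v_{j})\leq\int_{\{v_{j}<\psi\}}(\psi-v_{j})MA_{\omega}(v_{j})+C_{u}V_{\psi}$, using $u_{j}\leq\psi+C_{u}$ and $V_{v_{j}}=V_{\psi}$. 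The finiteness of $v\in\mathcal{E}^{1}(X,\omega,\psi)$ bounds $\sup_{j}\int_{\{v_{j}<\psi\}}(\psi-v_{j})MA_{\omega}(v_{j})$, yielding a finite lower bound $M$ for all $E_{\psi}(P_{j})$. Continuity of $E_{\psi}$ along $P_{j}\searrow P$ then gives $E_{\psi}(P)=\lim_{j}E_{\psi}(P_{j})\geq M>-\infty$, so $P\in\mathcal{E}^{1}(X,\omega,\psi)$.

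The step I expect to be the main obstacle is precisely this uniform lower bound, i.e. controlling $\int_{\{v_{j}<\psi\}}(\psi-v_{j})MA_{\omega}(v_{j})$ independently of $j$. It is the only place where the finite-energy hypothesis on $u,v$ is genuinely used, and it rests on the monotonicity of the energy integrals along the canonical approximants recalled via Lemma $4.12$ of \cite{DDNL17b} (equivalently, the characterization of $E_{\psi}>-\infty$ stated before Proposition \ref{prop:Decr}); all the remaining steps are formal consequences of Lemma \ref{lem:PMass}, Proposition \ref{prop:4.10}, and the envelope continuity lemma.
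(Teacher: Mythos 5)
Your proposal is correct and takes essentially the same approach as the paper: both pass to the $\psi$-relative canonical approximants $u_{j}=\max(u,\psi-j)$, $v_{j}=\max(v,\psi-j)$, observe that $P_{\omega}(u_{j},v_{j})$ has $\psi$-relative minimal singularities and decreases to $P_{\omega}(u,v)$, derive a $j$-uniform energy bound from Lemma \ref{lem:PMass} combined with Proposition \ref{prop:4.10}, and conclude via the continuity and monotonicity of $E_{\psi}$ along decreasing sequences (Proposition \ref{prop:Decr}). The only difference is cosmetic: the paper normalizes $u,v\leq 0$ (hence $u,v\leq\psi$) and bounds $\int_{X}(\psi-w_{j})MA_{\omega}(w_{j})$ symmetrically by $(n+1)|E_{\psi}(u_{j})+E_{\psi}(v_{j})|$, whereas you run the estimate through $\int_{X}(u_{j}-P_{j})MA_{\omega}(P_{j})$, where only the $v_{j}$-contact set contributes.
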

\begin{proof}
Up to rescaling we may assume $u,v\leq 0$. For any $j\in\mathbbm{N}$ let $u_{j}:=\max(u,\psi-j), v_{j}:=\max(v,\psi-j)$ be the $\psi$-relative canonical approximants of $u,v$. Then $w_{j}:=P_{\omega}(u_{j},v_{j})$ is a decreasing sequence of potentials with $\psi$-relative minimal singularities. Moreover it is easy to check that $w_{j}\searrow P_{\omega}(u,v)$. Thus by Proposition \ref{prop:Decr} it is sufficient to find an uniform bound for $E_{\psi}(w_{j})$, and by Proposition \ref{prop:4.10} this is equivalent to finding $C>0$ independent of $j$ such that $\int_{X}(\psi-w_{j})MA_{\omega}(w_{j})\leq C$. But Lemma \ref{lem:PMass} implies
\begin{multline*}
\int_{X}(\psi-w_{j})MA_{\omega}(w_{j})\leq \int_{\{w_{j}=u_{j}\}}(\psi-u_{j})MA_{\omega}(u_{j})+\int_{\{w_{j}=v_{j}\}}(\psi-v_{j})MA_{\omega}(v_{j})\leq \\
\leq(n+1)|E_{\psi}(u_{j})+E_{\psi}(v_{j})|\leq(n+1)|E_{\psi}(u)+E_{\psi}(v)|.
\end{multline*}
The second statement is now an easy consequence of the monotonicity of $E_{\psi}$ since $P_{\omega}(u_{j},v_{j})\searrow P_{\omega}(u,v)$ for any couple of decreasing sequences $u_{j}\searrow u,v_{j}\searrow v$.
\end{proof}
\section{A metric geometry on $\mathcal{E}^{1}(X,\omega,\psi)$: proof of Theorem \ref{thmA}.}
\label{sec:ThmA}
Recall that we are assuming $\psi\in \mathcal{M}^{+}$, i.e. $\psi$ model type envelope with $V_{\psi}>0$.
\subsection{$\mathcal{E}^{1}(X,\omega,\psi)$ as metric space.}
In this subsection we prove that $(\mathcal{E}^{1}(X,\omega,\psi),d)$ is a metric space where $d:\mathcal{E}^{1}(X,\omega,\psi)\times\mathcal{E}^{1}(X,\omega,\psi)\to \mathbbm{R}_{\geq 0}$ is defined as
$$
d(u_{1},u_{2}):=E_{\psi}(u_{1})+E_{\psi}(u_{2})-2E_{\psi}(P_{\omega}(u_{1},u_{2})).
$$
It follows from section \ref{sec:Preli} that $d$ assumes finite non-negative values, and that $d$ is continuous along decreasing sequences converging to elements in $\mathcal{E}^{1}(X,\omega,\psi)$. 
\begin{lem}
\label{lem:Properties}
Assume $u,v,w\in\mathcal{E}^{1}(X,\omega,\psi)$. Then the followings hold:
\begin{itemize}
\item[i)] $d(u,v)=d(v,u)$;
\item[ii)] if $u\leq v$ then $d(u,v)=E_{\psi}(v)-E_{\psi}(u)$;
\item[iii)] if $u\leq v\leq w$ then $d(u,w)=d(u,v)+d(v,w)$;
\item[iv)] $d(u,v)=d(u,P_{\omega}(u,v))+d(v,P_{\omega}(u,v))$;
\item[v)] $d(u,v)=0$ iff $u=v$.
\end{itemize}
\end{lem}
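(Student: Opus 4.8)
I want to verify the five properties of $d$, and I expect that (i)--(iv) are essentially formal consequences of the definition together with the additivity/monotonicity of $E_\psi$ recorded in Propositions \ref{prop:Decr} and \ref{prop:4.10}, while the genuine content lies in the non-degeneracy statement (v), specifically the implication $d(u,v)=0 \Rightarrow u=v$.

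**Properties (i)--(iv).** Symmetry (i) is immediate since $d(u,v)$ is manifestly symmetric in $u,v$ (note $P_\omega(u,v)=P_\omega(v,u)$). For (ii), if $u\leq v$ then $\min(u,v)=u$, so $P_\omega(u,v)=P_\omega(u,u)=u$; substituting into the definition gives $d(u,v)=E_\psi(u)+E_\psi(v)-2E_\psi(u)=E_\psi(v)-E_\psi(u)$. For (iii), when $u\leq v\leq w$ I apply (ii) three times: $d(u,w)=E_\psi(w)-E_\psi(u)=\big(E_\psi(w)-E_\psi(v)\big)+\big(E_\psi(v)-E_\psi(u)\big)=d(v,w)+d(u,v)$. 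For (iv), observe that $P_\omega(u,v)\leq u$ and $P_\omega(u,v)\leq v$ by construction, so $P_\omega(u,v)\in\mathcal{E}^1(X,\omega,\psi)$ by Proposition \ref{prop:Proj}, and applying (ii) to each comparable pair yields $d\big(u,P_\omega(u,v)\big)+d\big(v,P_\omega(u,v)\big)=\big(E_\psi(u)-E_\psi(P_\omega(u,v))\big)+\big(E_\psi(v)-E_\psi(P_\omega(u,v))\big)=E_\psi(u)+E_\psi(v)-2E_\psi(P_\omega(u,v))=d(u,v)$.

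**Property (v), the main point.** One direction is trivial: if $u=v$ then $P_\omega(u,v)=u$ and $d(u,v)=2E_\psi(u)-2E_\psi(u)=0$. For the converse, suppose $d(u,v)=0$. By (iv) this forces $d\big(u,P_\omega(u,v)\big)=d\big(v,P_\omega(u,v)\big)=0$, and since $P_\omega(u,v)\leq u$, property (ii) gives $E_\psi(u)=E_\psi\big(P_\omega(u,v)\big)$, and similarly for $v$. Thus it suffices to prove the following reduction: \emph{if $\phi\leq u$ with $\phi,u\in\mathcal{E}^1(X,\omega,\psi)$ and $E_\psi(\phi)=E_\psi(u)$, then $\phi=u$.} To establish this I would use the sharp lower bound in Proposition \ref{prop:4.10}(ii): since $\phi\leq u$,
$$
0=E_\psi(u)-E_\psi(\phi)\geq \int_X(u-\phi)\,MA_\omega(\phi)\geq 0,
$$
wait---I must orient the inequality correctly, applying part (ii) with the roles so that $\int_X(\phi-u)MA_\omega(\phi)\leq E_\psi(\phi)-E_\psi(u)$, which gives $\int_X(u-\phi)MA_\omega(\phi)\leq E_\psi(u)-E_\psi(\phi)=0$. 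Since the integrand is nonnegative this yields $\int_X(u-\phi)MA_\omega(\phi)=0$, hence $u=\phi$ almost everywhere with respect to $MA_\omega(\phi)$.

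**The expected obstacle.** The hard step is passing from ``$u=\phi$ $MA_\omega(\phi)$-a.e.'' to ``$u=\phi$ everywhere.'' The Monge--Ampère measure can charge only a portion of $X$, so vanishing of the integral does not immediately give pointwise equality off its support. The standard device, which I would adapt from the absolute case, is a domination/maximum-principle argument: I would exploit that $\phi$ and $u$ have the same $\psi$-relative full mass $V_\phi=V_u=V_\psi$, combine the above with the Comparison Principle (Proposition \ref{prop:Comp}) applied on the contact set $\{\phi<u\}$, and conclude that this set has zero mass for every relevant mixed Monge--Ampère measure, forcing $u-\phi$ to be a constant that must vanish. Concretely, running the same energy-difference estimate through each term of the telescoping sum in Proposition \ref{prop:4.10}(i) and using the domination principle for functions in $\mathcal{E}(X,\omega,\psi)$ should upgrade the almost-everywhere equality to the genuine equality $u=\phi$, completing (v) and hence the proof that $d$ is a metric.
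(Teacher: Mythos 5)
Your proposal follows essentially the same route as the paper: properties (i)--(iv) are handled exactly as the paper intends (it dismisses them as straightforward), and for (v) you make the same reduction via (ii) and (iv) to the claim that $\phi\leq u$ with $E_{\psi}(\phi)=E_{\psi}(u)$ forces $\phi=u$, extract ``$u=\phi$ a.e.\ with respect to $MA_{\omega}(\phi)$'' from Proposition \ref{prop:4.10}, and then invoke a domination principle --- which is precisely the paper's one-line conclusion, citing Proposition 3.11 of \cite{DDNL17b}.

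Two points in your execution of (v), however. First, your ``corrected'' sign manipulation is backwards: from $\int_{X}(\phi-u)MA_{\omega}(\phi)\leq E_{\psi}(\phi)-E_{\psi}(u)$ one deduces $\int_{X}(u-\phi)MA_{\omega}(\phi)\geq E_{\psi}(u)-E_{\psi}(\phi)=0$, which is vacuous since the integrand is nonnegative anyway. The vanishing of $\int_{X}(u-\phi)MA_{\omega}(\phi)$ follows instead from the \emph{other} half of Proposition \ref{prop:4.10}(ii): $E_{\psi}(\phi)-E_{\psi}(u)\leq \frac{1}{n+1}\int_{X}(\phi-u)MA_{\omega}(\phi)$, i.e.\ $0=E_{\psi}(u)-E_{\psi}(\phi)\geq \frac{1}{n+1}\int_{X}(u-\phi)MA_{\omega}(\phi)\geq 0$; your first instinct, before the ``wait'', was the right one up to the factor $\frac{1}{n+1}$. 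Second, your ``expected obstacle'' is not actually an obstacle in this setting: the upgrade from ``$u\leq\phi$ a.e.\ with respect to $MA_{\omega}(\phi)$'' to ``$u\leq\phi$ everywhere'' is exactly the relative domination principle (Proposition 3.11 in \cite{DDNL17b}), available as a black box for $\phi\in\mathcal{E}(X,\omega,\psi)$ --- and $\phi=P_{\omega}(u,v)$ does lie in $\mathcal{E}^{1}(X,\omega,\psi)\subset\mathcal{E}(X,\omega,\psi)$ by Proposition \ref{prop:Proj}. It yields $u\leq\phi$, hence $u=\phi$, directly; the telescoping-sum/contact-set machinery and the ``constant that must vanish'' step you sketch are unnecessary (and the latter is not the natural intermediate conclusion --- nothing forces $u-\phi$ to be constant before it vanishes). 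With the citation in hand, your argument closes exactly as the paper's does, by symmetry in $u$ and $v$.
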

\begin{proof}
All points are straightforward except one implication in $(v)$. Thus assume $d(u,v)=0$. Then by $(ii)$ and $(iv)$ we get $E_{\psi}(u)=E_{\psi}(P_{\omega}(u,v))$, which implies $P_{\omega}(u,v)=u$ a.e. with respect to $MA_{\omega}(P_{\omega}(u,v))$ (Proposition \ref{prop:4.10}). Hence by the domination principle (Proposition $3.11$ in \cite{DDNL17b}) we obtain $P_{\omega}(u,v)\geq u$, i.e. $P_{\omega}(u,v)=u$. The conclusion follows by symmetry.  
\end{proof}
To prove that $\mathcal{E}^{1}(X,\omega,\psi)$ is a metric space, it remains to prove the triangle inequality. We proceed as in section $3.1$ in \cite{DDNL18a}, but as a courtesy to the reader we will report here many of their proofs adapted to our setting.
\begin{prop}
\label{prop:1b}
Let $u,v\in \mathcal{E}^{1}(X,\omega,\psi)$ be potentials with $\psi$-relative minimal singularities. For $t\in[0,1]$ set $\varphi_{t}:=P_{\omega}\big((1-t)u+tv,v\big)$. Then for any $t\in[0,1]$
$$
\frac{d}{dt}E_{\psi}(\varphi_{t})=\int_{X}\big(v-\min(u,v)\big)MA_{\omega}(\varphi_{t}).
$$
\end{prop}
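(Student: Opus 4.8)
The plan is to recognize $\varphi_t$ as an envelope with a single \emph{affine} obstacle and to differentiate the energy by combining the sandwich inequalities of Proposition \ref{prop:4.10} with the concentration of the Monge--Amp\`ere measure on the contact set.

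First I would record the structure of the curve. Since $u,v\in PSH(X,\omega)$ and $t\in[0,1]$, the function $w_t:=(1-t)u+tv$ is again $\omega$-psh, so $\varphi_t=P_\omega(w_t,v)$ is the rooftop envelope of two $\omega$-psh functions and Lemma \ref{lem:PMass} applies. Writing the obstacle as
$$
h_t:=\min(w_t,v)=tv+(1-t)\min(u,v)=v-(1-t)g,\qquad g:=v-\min(u,v)\ge 0,
$$
one sees that $h_t$ is affine in $t$ with $h_{t+s}=h_t+sg$, and that $g$ is a bounded quasicontinuous function (a difference of $\omega$-psh functions, bounded because $u,v\in[\psi]$). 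As $g\ge 0$ the family $\varphi_t$ is nondecreasing in $t$, it has $\psi$-relative minimal singularities (it lies between $\psi-C$ and $v$), and by Lemma \ref{lem:PMass} the measure $MA_\omega(\varphi_t)$ is concentrated on the contact set $\{\varphi_t=h_t\}$. I set $R(t):=\int_X g\,MA_\omega(\varphi_t)$, the quantity the statement claims to be the derivative.

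Next I would compute the right derivative. For $s>0$ one has $\varphi_{t+s}\ge\varphi_t$, and Proposition \ref{prop:4.10}(iii) gives
$$
\int_X(\varphi_{t+s}-\varphi_t)MA_\omega(\varphi_{t+s})\le E_\psi(\varphi_{t+s})-E_\psi(\varphi_t)\le\int_X(\varphi_{t+s}-\varphi_t)MA_\omega(\varphi_t).
$$
On $\{\varphi_t=h_t\}$ the bound $\varphi_{t+s}\le h_{t+s}=\varphi_t+sg$ yields $\varphi_{t+s}-\varphi_t\le sg$, while on $\{\varphi_{t+s}=h_{t+s}\}$ the bound $\varphi_t\le h_t=\varphi_{t+s}-sg$ yields $\varphi_{t+s}-\varphi_t\ge sg$; using the concentration of the two measures and $g\ge 0$ this sandwiches the increment between $sR(t+s)$ and $sR(t)$. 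Dividing by $s$ and letting $s\to 0^+$, while noting that $\varphi_{t+s}\searrow\varphi_t$ in capacity forces $R(t+s)\to R(t)$ (apply Theorem \ref{thm:conv} to $\pm g$, the total masses being all equal to $V_\psi$), gives that the right derivative of $E_\psi(\varphi_t)$ equals $R(t)$.

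Finally I would upgrade this to a genuine two-sided derivative. The same sandwich with $t-s$ in place of $t$ shows $sR(t)\le E_\psi(\varphi_t)-E_\psi(\varphi_{t-s})\le sR(t-s)$, so the left derivative equals $R(t)$ once $R$ is left-continuous, i.e. once $\varphi_{t-s}\to\varphi_t$ in capacity. This increasing convergence is the main obstacle, and I would settle it with an energy/domination argument: the function $F(t):=E_\psi(\varphi_t)$ is midpoint-concave because $\varphi_{(t_0+t_1)/2}\ge\tfrac12(\varphi_{t_0}+\varphi_{t_1})$ (the right side is $\omega$-psh and bounded above by the obstacle $h_{(t_0+t_1)/2}$), combined with the monotonicity and affine-concavity of $E_\psi$ from Proposition \ref{prop:Decr}; being also bounded, $F$ is concave, hence continuous on $(0,1)$ and with $F(1^-)\ge F(1)$. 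Setting $\phi:=(\sup_{t'<t}\varphi_{t'})^*\le\varphi_t$ (since $\phi$ is $\omega$-psh and $\le h_t$), the monotonicity of $E_\psi$ squeezes $E_\psi(\phi)=E_\psi(\varphi_t)$; the domination principle (Proposition $3.11$ in \cite{DDNL17b}, as in Lemma \ref{lem:Properties}(v)) then forces $\phi=\varphi_t$, so $\varphi_{t-s}\nearrow\varphi_t$ quasi-everywhere and thus in capacity. Consequently $R$ is continuous, the two one-sided derivatives agree, and $\frac{d}{dt}E_\psi(\varphi_t)=R(t)=\int_X\big(v-\min(u,v)\big)MA_\omega(\varphi_t)$, as claimed.
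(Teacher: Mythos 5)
Your proposal is correct, and its core computation is the paper's: the sandwich $\int_X(\varphi_{t+s}-\varphi_t)MA_{\omega}(\varphi_{t+s})\leq E_{\psi}(\varphi_{t+s})-E_{\psi}(\varphi_t)\leq \int_X(\varphi_{t+s}-\varphi_t)MA_{\omega}(\varphi_t)$ from Proposition \ref{prop:4.10}$.(iii)$, converted via Lemma \ref{lem:PMass} and the affine structure $h_{t+s}=h_t+sg$ into $sR(t+s)\leq E_{\psi}(\varphi_{t+s})-E_{\psi}(\varphi_t)\leq sR(t)$, and closed by Theorem \ref{thm:conv}. Where you genuinely diverge is in handling the continuity of $t\mapsto\varphi_t$, and especially the left derivative. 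The paper observes that, because $u,v$ have $\psi$-relative minimal singularities, $g=v-\min(u,v)$ is bounded, so the envelope monotonicity gives the uniform two-sided estimate $|\varphi_{t+s}-\varphi_t|\leq |s|\sup_X g$ (any competitor for $h_{t+s}$ minus $|s|\sup_X g$ is a competitor for $h_t$, and conversely); uniform convergence then yields weak convergence of the Monge--Amp\`ere measures in both directions at once, and the left derivative follows by literally the same argument as the right. You exploit this bound implicitly for the decreasing limit (it is what guarantees $\inf_{s>0}\varphi_{t+s}\leq h_t$, hence $\varphi_{t+s}\searrow\varphi_t$ --- worth one line in a final write-up), but then, for the increasing limit $\varphi_{t-s}\nearrow\varphi_t$, you build a much heavier apparatus: midpoint concavity of $F(t)=E_{\psi}(\varphi_t)$ via $\varphi_{(t_0+t_1)/2}\geq\tfrac12(\varphi_{t_0}+\varphi_{t_1})$ and Proposition \ref{prop:Decr}, Bernstein--Doetsch to upgrade to concavity and left-continuity of $F$, an energy squeeze $E_{\psi}(\phi)=E_{\psi}(\varphi_t)$ for $\phi=(\sup_{t'<t}\varphi_{t'})^{*}$, and the domination principle to conclude $\phi=\varphi_t$. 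Each of these steps checks out, so the argument is valid; but it is unnecessary here, since the same one-line observation $h_t\leq h_{t-s}+s\sup_X g$ gives $0\leq\varphi_t-\varphi_{t-s}\leq s\sup_X g$ directly. What your detour buys is robustness: the concavity-plus-domination mechanism for left-continuity does not use boundedness of $g$, so it is the kind of argument one would need if the minimal-singularities hypothesis were relaxed; at the level of this proposition, though, the paper's uniform estimate is both shorter and the intended use of the hypothesis.
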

\begin{proof}
Let us prove the formula for the right derivative. The same argument easily works for the left derivative. Thus fix $t\in[0,1)$, let $s>0$ small and $f_{t}:=\min\big((1-t)u+tv,v\big)$. Using Proposition \ref{prop:4.10}$.(iii)$ and Lemma \ref{lem:PMass} it is easy to check that
$$
\int_{X}(f_{t+s}-f_{t})MA_{\omega}(\varphi_{t+s})\leq E_{\psi}(\varphi_{t+s})-E_{\psi}(\varphi_{t})\leq \int_{X}(f_{t+s}-f_{t})MA_{\omega}(\varphi_{t}).
$$
Moreover $\varphi_{t+s}\to \varphi_{t}$ uniformly as $s\to 0^{+}$ since $||u-v||_{L^{\infty}}\leq C$, thus $MA_{\omega}(\varphi_{t+s})$ converges weakly to $MA_{\omega}(\varphi_{t})$. Therefore since $f_{t+s}-f_{t}=s\big(v-\min(u,v)\big)$ and since and again $||u-v||_{L^{\infty}}\leq C$, Theorem \ref{thm:conv} yields
$$
\lim_{s\to 0^{+}}\frac{E_{\psi}(\varphi_{t+s})-E_{\psi}(\varphi_{t})}{s}=\int_{X}\big(v-\min(u,v)\big)MA_{\omega}(\varphi_{t}).
$$
\end{proof}
\begin{prop}
\label{prop:3b}
Let $u,v\in\mathcal{E}^{1}(X,\omega,\psi)$. Then $d\big(\max(u,v),u\big)\geq d\big(v,P_{\omega}(u,v)\big)$.
\end{prop}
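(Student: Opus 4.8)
The plan is to reduce the claimed inequality to the single energy inequality
$$
E_{\psi}(\max(u,v))+E_{\psi}(P_{\omega}(u,v))\geq E_{\psi}(u)+E_{\psi}(v), \qquad (\star)
$$
and then to prove $(\star)$ by writing both of its sides as integrals of the \emph{same} integrand against two different Monge--Amp\`ere measures. First I would pass to the case where $u,v$ have $\psi$-relative minimal singularities: replacing $u,v$ by the canonical approximants $u_{j}=\max(u,\psi-j)$, $v_{j}=\max(v,\psi-j)$, one has $\max(u_{j},v_{j})\searrow\max(u,v)$ and, by Proposition \ref{prop:Proj}, $P_{\omega}(u_{j},v_{j})\searrow P_{\omega}(u,v)$; since $d$ is continuous along decreasing sequences, it suffices to treat potentials with bounded difference. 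In that case $P_{\omega}(u,v)\leq u\leq\max(u,v)$ and $P_{\omega}(u,v)\leq v\leq \max(u,v)$, so Lemma \ref{lem:Properties}(ii) gives $d(\max(u,v),u)=E_{\psi}(\max(u,v))-E_{\psi}(u)$ and $d(v,P_{\omega}(u,v))=E_{\psi}(v)-E_{\psi}(P_{\omega}(u,v))$, and the statement is exactly $(\star)$.

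\textbf{Two curves with a common integrand.} Next I set $h_{t}:=(1-t)u+tv$ and $g:=v-\min(u,v)=(v-u)_{+}\geq 0$. Two paths carry the two sides of $(\star)$. On one hand $\phi_{t}:=\max(u,h_{t})=u+t\,g$ is an affine path of $\omega$-psh functions from $u$ to $\max(u,v)$, so differentiating the energy along it (as in Proposition \ref{prop:1b}, or directly from Proposition \ref{prop:4.10}) gives
$$
E_{\psi}(\max(u,v))-E_{\psi}(u)=\int_{0}^{1}\!\int_{X} g\,MA_{\omega}(\phi_{t})\,dt.
$$
On the other hand $\varphi_{t}:=P_{\omega}(h_{t},v)$ is precisely the curve of Proposition \ref{prop:1b}, running from $P_{\omega}(u,v)$ to $v$, whose energy derivative is $\int_{X}(v-\min(u,v))MA_{\omega}(\varphi_{t})=\int_{X} g\,MA_{\omega}(\varphi_{t})$, so that
$$
E_{\psi}(v)-E_{\psi}(P_{\omega}(u,v))=\int_{0}^{1}\!\int_{X} g\,MA_{\omega}(\varphi_{t})\,dt.
$$
Thus $(\star)$ follows once I establish, for every $t\in[0,1)$, the comparison
$$
\int_{X} g\,MA_{\omega}(\phi_{t})\geq\int_{X} g\,MA_{\omega}(\varphi_{t}). \qquad (\star\star)
$$

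\textbf{The main obstacle: proving $(\star\star)$.} The heart of the argument is $(\star\star)$, and I expect it to be the delicate step. Its proof rests on two observations: that $g$ vanishes on $\{v\leq u\}$, hence is supported in the plurifine-open set $\{v>u\}$, and that on $\{v>u\}$ one has $\phi_{t}=h_{t}$ while $h_{t}<v$ for $t<1$. The first observation lets me restrict both integrals to $\{v>u\}$, where, by plurifine locality of the non-pluripolar product, $MA_{\omega}(\phi_{t})=MA_{\omega}(h_{t})$. For the second curve I apply Lemma \ref{lem:PMass} to $\varphi_{t}=P_{\omega}(h_{t},v)$: since $h_{t}<v$ on $\{v>u\}$ forces $\varphi_{t}<v$ there, one has $\{\varphi_{t}=v\}\cap\{v>u\}=\emptyset$, so the lemma yields $\mathbbm{1}_{\{v>u\}}MA_{\omega}(\varphi_{t})\leq\mathbbm{1}_{\{v>u\}\cap\{\varphi_{t}=h_{t}\}}MA_{\omega}(h_{t})$. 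Integrating the nonnegative $g$ and discarding the indicator gives $\int_{X} g\,MA_{\omega}(\varphi_{t})\leq\int_{\{v>u\}} g\,MA_{\omega}(h_{t})=\int_{X} g\,MA_{\omega}(\phi_{t})$, which is $(\star\star)$. Integrating in $t$ over $[0,1]$ (the endpoint $t=1$ being negligible) proves $(\star)$ and hence the proposition. The two points I would check most carefully are the validity of the energy differentiation along $\phi_{t}$ in the merely bounded minimal-singularity setting and the correct bookkeeping of the plurifine restriction, both of which are controlled by the tools recalled in Section \ref{sec:Preli}.
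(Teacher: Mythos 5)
Your proof is correct and takes essentially the same route as the paper's: after the same reduction to $\psi$-relative minimal singularities, you compare the energy derivatives along the affine path $\max(u,h_{t})=(1-t)u+t\max(u,v)$ and the envelope path $P_{\omega}(h_{t},v)$ from Proposition \ref{prop:1b}, using plurifine locality of the Monge--Amp\`ere operator and Lemma \ref{lem:PMass} together with the observation that $h_{t}<v$ on $\{v>u\}$ for $t<1$ (the paper's $\{w_{t}\leq v\}=\{u\leq v\}$) --- exactly the paper's argument. The only cosmetic difference is that you spell out the reduction step via the canonical approximants and Proposition \ref{prop:Proj}, where the paper simply invokes Proposition \ref{prop:Decr}.
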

\begin{proof}
Setting $\varphi:=\max(u,v)$ and $\phi:=P_{\omega}(u,v)$, the inequality to prove is equivalent to $E_{\psi}(v)-E_{\psi}(\phi)\leq E_{\psi}(\varphi)-E_{\psi}(u)$. By Proposition \ref{prop:Decr} we may assume $u,v$ having $\psi$-relative minimal singularities.\\
Next Proposition \ref{prop:1b} implies
\begin{multline*}
E_{\psi}(\varphi)-E_{\psi}(u)=\int_{0}^{1}\int_{X}(\varphi-u)MA_{\omega}\big((1-t)u+t\varphi\big)dt=\int_{0}^{1}\int_{X}(\varphi-u)MA_{\omega}\big(\max(w_{t},u)\big)=\\
=\int_{0}^{1}\int_{\{v>u\}}(v-u)MA_{\omega}(w_{t})
\end{multline*}
for $w_{t}:=(1-t)u+tv$ for $t\in[0,1]$, and where the last equality follows from the locality of the Monge-Ampère operator with respect to the plurifine topology.\\
On the other hand combining Proposition \ref{prop:1b} with Lemma \ref{lem:PMass} and $\{w_{t}\leq v\}=\{u\leq v\}$ we get
$$
E_{\psi}(v)-E_{\psi}(\phi)=\int_{0}^{1}\int_{X}\big(v-\min(u,v)\big)MA_{\omega}\big(P_{\omega}(w_{t},v)\big)\leq \int_{0}^{1}\int_{\{v>u\}}(v-u)MA_{\omega}(w_{t}),
$$
which concludes the proof.
\end{proof}
\begin{cor}
\label{cor:4b}
Let $u,v,w\in\mathcal{E}^{1}(X,\omega,\psi)$. Then $d(u,v)\geq d\big(P_{\omega}(u,w),P_{\omega}(v,w)\big)$.
\end{cor}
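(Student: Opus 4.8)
The plan is to show that the projection $P_{\omega}(\cdot,w)$ is a contraction for $d$, by first recasting Proposition \ref{prop:3b} as a submodularity inequality for $E_{\psi}$ and then reducing the general estimate to a purely monotone one via the Pythagorean identity of Lemma \ref{lem:Properties}(iv). First I would rewrite Proposition \ref{prop:3b}: since $\max(u,v)\geq u$ and $P_{\omega}(u,v)\leq v$, Lemma \ref{lem:Properties}(ii) converts $d\big(\max(u,v),u\big)\geq d\big(v,P_{\omega}(u,v)\big)$ into the submodular form
\begin{equation*}
E_{\psi}\big(\max(u,v)\big)+E_{\psi}\big(P_{\omega}(u,v)\big)\geq E_{\psi}(u)+E_{\psi}(v)\qquad\text{for all }u,v\in\mathcal{E}^{1}(X,\omega,\psi).
\end{equation*}

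Next I would record the elementary rooftop identities, valid because $P_{\omega}$ returns the largest $\omega$-psh minorant of the pointwise minimum: writing $m:=P_{\omega}(u,v)$, $a:=P_{\omega}(u,w)$, $b:=P_{\omega}(v,w)$, one checks $P_{\omega}(a,b)=P_{\omega}(u,v,w)=P_{\omega}(m,w)$ and $P_{\omega}(a,m)=P_{\omega}(m,w)$. Every potential appearing here remains in $\mathcal{E}^{1}(X,\omega,\psi)$ by Proposition \ref{prop:Proj} (and because a maximum of two elements of $\mathcal{E}^{1}(X,\omega,\psi)$ again lies in $\mathcal{E}^{1}(X,\omega,\psi)$, being squeezed between an element of the class and $\psi+C$). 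Lemma \ref{lem:Properties}(iv) then gives $d(a,b)=d\big(a,P_{\omega}(m,w)\big)+d\big(b,P_{\omega}(m,w)\big)$, while the same lemma yields $d(u,v)=d(u,m)+d(v,m)$. Hence, by symmetry in $u,v$, it suffices to prove the single monotone estimate $d\big(P_{\omega}(u,w),P_{\omega}(m,w)\big)\leq d(u,m)$, since summing it with its $v$-counterpart produces the claim.

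For the monotone estimate I use $m\leq u$, which gives $P_{\omega}(m,w)\leq a$ and $m\le u$, so by Lemma \ref{lem:Properties}(ii) the desired inequality is equivalent to
\begin{equation*}
E_{\psi}\big(P_{\omega}(u,w)\big)+E_{\psi}(m)\leq E_{\psi}(u)+E_{\psi}\big(P_{\omega}(m,w)\big).
\end{equation*}
Applying the submodularity inequality to the pair $a=P_{\omega}(u,w)$ and $m$, using $P_{\omega}(a,m)=P_{\omega}(m,w)$, then bounding $E_{\psi}\big(\max(a,m)\big)\leq E_{\psi}(u)$ via $\max(a,m)\leq u$ and the monotonicity of $E_{\psi}$ (Proposition \ref{prop:Decr}), gives exactly this bound.

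I expect the main obstacle to be the bookkeeping of the rooftop identities together with the verification that each auxiliary potential, in particular $\max(a,m)$ and $P_{\omega}(m,w)$, lies in $\mathcal{E}^{1}(X,\omega,\psi)$, so that all the $E_{\psi}$- and $d$-values are finite and the cited monotonicity and decomposition properties genuinely apply. Once the identity $P_{\omega}(a,m)=P_{\omega}(m,w)$ and the elementary inequality $\max(a,m)\leq u$ are established, the monotone estimate collapses to a single application of the submodular reformulation of Proposition \ref{prop:3b} followed by the monotonicity of $E_{\psi}$, and the general inequality follows by the two Pythagorean decompositions above.
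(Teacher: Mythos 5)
Your proposal is correct and is essentially the paper's own argument: the proof here simply defers to Corollary 3.5 of \cite{DDNL18a}, which performs exactly your reduction --- the Pythagorean identity of Lemma \ref{lem:Properties} together with the rooftop identities $P_{\omega}\big(P_{\omega}(u,w),P_{\omega}(v,w)\big)=P_{\omega}(u,v,w)=P_{\omega}\big(P_{\omega}(u,v),w\big)$ splits the estimate into two monotone cases, each settled by Proposition \ref{prop:3b} applied to the pair $\big(P_{\omega}(u,w),P_{\omega}(u,v)\big)$ together with $\max\big(P_{\omega}(u,w),P_{\omega}(u,v)\big)\leq u$ and the monotonicity of $E_{\psi}$. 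Your ``submodularity'' inequality $E_{\psi}\big(\max(u,v)\big)+E_{\psi}\big(P_{\omega}(u,v)\big)\geq E_{\psi}(u)+E_{\psi}(v)$ is just Proposition \ref{prop:3b} rewritten via Lemma \ref{lem:Properties}$.(ii)$, so the two proofs differ only in packaging, and your bookkeeping of $\mathcal{E}^{1}$-membership via Proposition \ref{prop:Proj} and the monotonicity of $E_{\psi}$ is sound.
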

\begin{proof}
It follows from Lemma \ref{lem:Properties}$.(iii)$ and Proposition \ref{prop:3b} by an easy calculation (see Corollary $3.5$ in \cite{DDNL18a} for the details).
\end{proof}
We are now ready to prove the main theorem of this subsection:
\begin{thm}
$(\mathcal{E}^{1}(X,\omega,\psi),d)$ is a metric space.
\end{thm}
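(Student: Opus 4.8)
The only property not yet established is the triangle inequality $d(u,w)\le d(u,v)+d(v,w)$ for $u,v,w\in\mathcal{E}^{1}(X,\omega,\psi)$; symmetry, the split formula, and the implication $d=0\Rightarrow$ equality are already contained in Lemma \ref{lem:Properties}. The plan is to introduce the common lower bound
$$
\rho:=P_{\omega}\big(P_{\omega}(u,v),P_{\omega}(v,w)\big),
$$
which by Proposition \ref{prop:Proj} lies in $\mathcal{E}^{1}(X,\omega,\psi)$ and satisfies $\rho\le P_{\omega}(u,v)\le u$ as well as $\rho\le P_{\omega}(v,w)\le w$, so that $\rho\le\min(u,w)$ and hence $\rho\le P_{\omega}(u,w)$. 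Reducing the triangle inequality to a single energetic estimate and then feeding that estimate to the contraction property of Corollary \ref{cor:4b} will be the backbone of the argument.

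First I would collapse the left-hand side onto $\rho$. Writing $d(u,w)=d\big(u,P_{\omega}(u,w)\big)+d\big(w,P_{\omega}(u,w)\big)$ by Lemma \ref{lem:Properties}.(iv), and then applying Lemma \ref{lem:Properties}.(iii) along the chains $\rho\le P_{\omega}(u,w)\le u$ and $\rho\le P_{\omega}(u,w)\le w$ to discard the nonnegative term $d\big(P_{\omega}(u,w),\rho\big)$, one gets $d(u,w)\le d(u,\rho)+d(\rho,w)$. Since $\rho\le u$ and $\rho\le w$, Lemma \ref{lem:Properties}.(ii) turns both terms into energy differences, whence $d(u,w)\le E_{\psi}(u)+E_{\psi}(w)-2E_{\psi}(\rho)$. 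Expanding $d(u,v)+d(v,w)$ from the definition, the desired inequality becomes equivalent to the purely energetic statement
$$
E_{\psi}\big(P_{\omega}(u,v)\big)+E_{\psi}\big(P_{\omega}(v,w)\big)-E_{\psi}(\rho)\le E_{\psi}(v).
$$

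To finish I would set $a:=P_{\omega}(u,v)$ and $b:=P_{\omega}(v,w)$, so that $a,b\le v$ and $\rho=P_{\omega}(a,b)$. Since $\rho\le a$ and $b\le v$, Lemma \ref{lem:Properties}.(ii) rewrites the displayed inequality as $d(a,\rho)\le d(v,b)$, and this is exactly where the contraction property enters: applying Corollary \ref{cor:4b} to the triple $(v,b,a)$ yields $d(v,b)\ge d\big(P_{\omega}(v,a),P_{\omega}(b,a)\big)$, and because $a\le v$ forces $P_{\omega}(v,a)=a$ while $P_{\omega}(b,a)=\rho$, the right-hand side is precisely $d(a,\rho)$. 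I expect the main obstacle to be not any single estimate but the bookkeeping: selecting the right double envelope $\rho$, checking the order relations that legitimize Lemma \ref{lem:Properties}.(ii)--(iii), and matching indices in Corollary \ref{cor:4b} so that the two envelopes $P_{\omega}(v,a)$ and $P_{\omega}(b,a)$ collapse as needed. Finiteness of every quantity is guaranteed throughout by Proposition \ref{prop:Proj}, which keeps each auxiliary potential inside $\mathcal{E}^{1}(X,\omega,\psi)$.
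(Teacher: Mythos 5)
Your proof is correct and is essentially the paper's own argument in reorganized form: since $\rho=P_{\omega}\big(P_{\omega}(u,v),P_{\omega}(v,w)\big)=P_{\omega}(u,v,w)$, the key inequality you extract from Corollary \ref{cor:4b}, namely $d(v,P_{\omega}(v,w))\geq d\big(P_{\omega}(u,v),\rho\big)$, is exactly the one the paper obtains (projecting by $u$ rather than by $a=P_{\omega}(u,v)$, with the same endpoints). Your up-front replacement of $P_{\omega}(u,w)$ by the smaller $\rho$ via Lemma \ref{lem:Properties} simply front-loads the monotonicity step (Proposition \ref{prop:Decr}) that the paper invokes at the end, so the two proofs coincide in substance.
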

\begin{proof}
As said before, it remains only to prove the triangle inequality (see Lemma \ref{lem:Properties}).\\
Let $u,v,w\in\mathcal{E}^{1}(X,\omega,\psi)$ and observe that the inequality $d(u,v)\leq d(u,w)+d(w,v)$ is equivalent to 
$$
E_{\psi}\big(P_{\omega}(u,w)\big)-E_{\psi}\big(P_{\omega}(u,v)\big)\leq E_{\psi}(w)-E_{\psi}\big(P_{\omega}(w,v)\big).
$$
By Corollary \ref{cor:4b} and using the monotonicity of the $\psi$-relative energy functional (Proposition \ref{prop:Decr}) we get
\begin{multline*}
E_{\psi}(w)-E_{\psi}\big(P_{\omega}(w,v)\big)=d\big(w,P_{\omega}(w,v)\big)\geq d\big(P_{\omega}(w,u), P_{\omega}(w,v,u)\big)=\\
=E_{\psi}\big(P_{\omega}(w,u)\big)-E_{\psi}\big(P_{\omega}(w,v,u)\big)\geq E_{\psi}\big(P_{\omega}(w,u)\big)-E_{\psi}\big(P_{\omega}(u,v)\big),
\end{multline*}
which implies the Theorem.
\end{proof}
\subsection{Completeness of $(\mathcal{E}^{1}(X,\omega,\psi),d)$.}
To show the completeness we first need to extend some results known in the absolute setting (i.e. if $\psi=0$, see \cite{BEGZ10}, \cite{DDNL18a}).
\begin{prop}
\label{prop:Ineq}
Assume $u,v\in\mathcal{E}^{1}(X,\omega,\psi)$. Then
$$
\frac{1}{3\cdot 2^{n+2}(n+1)}\Big(\int_{X}|u-v|\big(MA_{\omega}(u)+MA_{\omega}(v)\big)\Big)\leq d(u,v)\leq \int_{X}|u-v|\big(MA_{\omega}(u)+MA_{\omega}(v)\big).
$$
\end{prop}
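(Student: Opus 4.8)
The plan is to prove the two inequalities separately. The right-hand (upper) bound is elementary; the left-hand (lower) bound is the substantial one.

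For the \textbf{upper bound} I would set $\phi:=P_\omega(u,v)$ and use additivity along the rooftop, $d(u,v)=d(u,\phi)+d(v,\phi)$ (Lemma \ref{lem:Properties}.(iv)), together with $\phi\le u$ and $\phi\le v$, so that $d(u,\phi)=E_\psi(u)-E_\psi(\phi)$ and $d(v,\phi)=E_\psi(v)-E_\psi(\phi)$ (Lemma \ref{lem:Properties}.(ii)). Proposition \ref{prop:4.10}.(iii) applied to the pair $\phi\le u$ gives $E_\psi(u)-E_\psi(\phi)\le\int_X(u-\phi)MA_\omega(\phi)$, and Lemma \ref{lem:PMass} bounds $MA_\omega(\phi)$ by $MA_\omega(u)$ on the contact set $\{\phi=u\}$ and by $MA_\omega(v)$ on $\{\phi=v\}$. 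Since $u-\phi$ vanishes on $\{\phi=u\}$ and equals $u-v=|u-v|\ge 0$ on $\{\phi=v\}\subseteq\{v\le u\}$, this yields $d(u,\phi)\le\int_X|u-v|MA_\omega(v)$, and symmetrically $d(v,\phi)\le\int_X|u-v|MA_\omega(u)$; adding the two gives the upper bound.

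For the \textbf{lower bound} I would first settle the comparable case $u\le v$. Here $d(u,v)=E_\psi(v)-E_\psi(u)=\frac{1}{n+1}\sum_{j=0}^n a_j$ with $a_j:=\int_X(v-u)MA_\omega(u^j,v^{n-j})$ (Proposition \ref{prop:4.10}.(i)). Integration by parts gives $a_j-a_{j+1}=\int_X(v-u)\,\omega_u^j\wedge dd^c(v-u)\wedge\omega_v^{n-j-1}=-\int_X d(v-u)\wedge d^c(v-u)\wedge\omega_u^j\wedge\omega_v^{n-j-1}\le 0$, so that $a_0\le a_1\le\cdots\le a_n$. Hence $\int_X(v-u)\big(MA_\omega(u)+MA_\omega(v)\big)=a_n+a_0\le 2a_n\le 2(n+1)\big(E_\psi(v)-E_\psi(u)\big)$, i.e. $d(u,v)\ge\frac{1}{2(n+1)}\int_X(v-u)\big(MA_\omega(u)+MA_\omega(v)\big)$ in the comparable case. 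For general $u,v$ I would again write $d(u,v)=d(u,\phi)+d(v,\phi)$ and reduce, by symmetry, to estimating $\int_X|u-v|MA_\omega(u)$. Splitting $X$ into $\{u\ge v\}$ and $\{u<v\}$: on $\{u\ge v\}$ one has $u-v\le u-\phi$, whence $\int_{\{u\ge v\}}(u-v)MA_\omega(u)\le\int_X(u-\phi)MA_\omega(u)\le d(u,\phi)$ by Proposition \ref{prop:4.10}.(iii), which is harmless. The genuine difficulty is the \emph{cross term} $\int_{\{u<v\}}(v-u)MA_\omega(u)=\int_X\big(\max(u,v)-u\big)MA_\omega(u)$, in which $MA_\omega(u)$ is tested against the region where $v$ dominates. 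To handle it I would invoke the midpoint domination $MA_\omega(u)\le 2^n MA_\omega\big(\frac{u+v}{2}\big)$, coming from $\omega_{(u+v)/2}=\frac12(\omega_u+\omega_v)$ and the binomial expansion of the non-pluripolar product, which converts the cross term into an integral against $MA_\omega\big(\frac{u+v}{2}\big)$. Using $\max(u,v)-u=2\big(\max(u,v)-\frac{u+v}{2}\big)$ on $\{u<v\}$ and the comparable-case estimate for the pair $\frac{u+v}{2}\le\max(u,v)$, this cross term is bounded by $2^{n+1}(n+1)\,d\big(\frac{u+v}{2},\max(u,v)\big)$; the $2^n$ in the final constant is exactly this midpoint factor, and the remaining numerical factor reassembles the two regions and the two measures.

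The \textbf{main obstacle} is precisely this cross term, equivalently the inequality $d\big(\frac{u+v}{2},\max(u,v)\big)\le C\,d(u,v)$ (which is comparable to controlling $d\big(P_\omega(u,v),\max(u,v)\big)$ by $d(u,v)$). The subtlety is that $MA_\omega(u)$ may carry substantial mass on $\{v>u\}$ with no matching mass of $MA_\omega(v)$ there, so one cannot compare the two measures directly through the comparison principle (Proposition \ref{prop:Comp}); one must instead pass through the midpoint potential and the monotonicity of the mixed masses established above. This is the step where I would most carefully follow the adaptation of the absolute-case estimates of \cite{BEGZ10} and \cite{DDNL18a}, systematically replacing the background quantities by the $\psi$-relative energy $E_\psi$ and the $\psi$-relative envelopes, and checking that all integration by parts remain valid in the present singular setting (as guaranteed by \cite{X19a}).
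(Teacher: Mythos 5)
Your upper bound is correct and is precisely the argument the paper relies on: the paper's proof of Proposition \ref{prop:Ineq} is a one-line reference to Theorem $3.7$ of \cite{DDNL18a}, with their Theorem $2.1$ and Lemma $3.1$ replaced by Proposition \ref{prop:4.10} and Lemma \ref{lem:Properties}, and your combination of the Pythagorean identity of Lemma \ref{lem:Properties}$.(iv)$, Lemma \ref{lem:PMass} and Proposition \ref{prop:4.10}$.(iii)$ is exactly that proof. Your comparable case is also fine (the integration-by-parts monotonicity of the $a_{j}$, which in this singular setting indeed needs \cite{X19a}, is more than necessary: since all $a_{j}\geq 0$ one has $(n+1)\,d(u,v)=\sum_{j}a_{j}\geq a_{0}+a_{n}$ directly for $n\geq 1$, and Proposition \ref{prop:4.10}$.(ii)$--$(iii)$ give the estimate with no integration by parts at all). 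Likewise your treatment of the same-measure terms and your reduction of the cross term $\int_{\{u<v\}}(v-u)\,MA_{\omega}(u)$ to $2^{n+1}(n+1)\,d\big(\tfrac{u+v}{2},\max(u,v)\big)$, via the binomial domination $MA_{\omega}(u)\leq 2^{n}MA_{\omega}\big(\tfrac{u+v}{2}\big)$ and the comparable-case bound for $\tfrac{u+v}{2}\leq\max(u,v)$, are correct and match the skeleton of the cited proof.

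The genuine gap is the step you yourself label the main obstacle: the inequality $d\big(\tfrac{u+v}{2},\max(u,v)\big)\leq C\,d(u,v)$ is never proved; you defer it to ``following the adaptation of the absolute-case estimates''. This is not a routine adaptation check like the others: it is the actual content of the lower bound, the place where the constant $3\cdot 2^{n+2}(n+1)$ in the statement is generated, and it does not follow from the ingredients you have assembled. Indeed the natural attempts are circular. Concavity of $E_{\psi}$ along affine curves gives $d\big(\tfrac{u+v}{2},\max(u,v)\big)\leq\tfrac{1}{2}\big(d(\max(u,v),u)+d(\max(u,v),v)\big)$, but the only available upper bound $d(\max(u,v),u)=E_{\psi}(\max(u,v))-E_{\psi}(u)\leq\int_{\{v>u\}}(v-u)\,MA_{\omega}(u)$ (Proposition \ref{prop:4.10}$.(ii)$) is the very cross term you started from. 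Proposition \ref{prop:3b} runs in the opposite direction, $d(\max(u,v),u)\geq d\big(v,P_{\omega}(u,v)\big)$, so it cannot be used. And estimating instead $E_{\psi}\big(\tfrac{u+v}{2}\big)-E_{\psi}\big(P_{\omega}(u,v)\big)$ through Proposition \ref{prop:4.10}$.(iii)$ and Lemma \ref{lem:PMass} merely reproduces $\tfrac{1}{2}\int_{X}|u-v|\big(MA_{\omega}(u)+MA_{\omega}(v)\big)$ on the right-hand side, i.e.\ the quantity to be bounded. Closing this loop requires the additional argument carried out in the proof of Theorem $3.7$ of \cite{DDNL18a}, to which the paper defers verbatim; without it (and without tracking how it yields the stated constant), your proposal establishes the upper bound and the comparable case, but not the Proposition.
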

\begin{proof}
The proof is the same as that of Theorem $3.7$ in \cite{DDNL18a} replacing their Theorem $2.1$ and Lemma $3.1$ by our Proposition \ref{prop:4.10} and Lemma \ref{lem:Properties}.
\end{proof}
\begin{lem}
\label{lem:Bound}
There exist positive constants $A>1,B>0$ such that for any $u\in\mathcal{E}^{1}(X,\omega,\psi)$
$$
-d(\psi,u)\leq V_{\psi}\sup_{X}(u-\psi)=V_{\psi}\sup_{X}u\leq Ad(\psi,u)+B.
$$
\end{lem}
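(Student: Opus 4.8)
The plan is to handle the central equality and the two inequalities separately, exploiting two consequences of $\psi$ being a model type envelope of positive mass: that $\sup_X\psi=0$, and that $MA_\omega(\psi)$ is carried by $\{\psi=0\}$ and dominated by $\omega^n$. Both follow from Theorem \ref{thm:3.8}: since $\psi=P_\omega[\psi](0)\le 0$ and $MA_\omega(\psi)\le\mathbbm{1}_{\{\psi=0\}}\omega^n\le\omega^n$, a strict inequality $\sup_X\psi<0$ would force $\{\psi=0\}=\emptyset$ and hence $V_\psi=0$, contradicting the hypothesis; thus $\sup_X\psi=0$.

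\emph{The equality.} From $\psi\le 0$ one immediately gets $\sup_X u\le\sup_X(u-\psi)$. For the reverse I set $c:=\sup_X u$; then $u-c\le 0$ and $u-c\preccurlyeq\psi$, so $u-c$ is a competitor in $P_\omega(u+C,0)$ for $C$ large, whence $u-c\le P_\omega[u](0)=P_\omega[u]$. Since $u\in\mathcal{E}(X,\omega,\psi)$, Theorem \ref{thm:samesing} gives $P_\omega[u]=P_\omega[\psi]=\psi$, so $u-\psi\le c$ and $\sup_X(u-\psi)=\sup_X u$.

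\emph{The left inequality.} Writing $\phi:=P_\omega(u,\psi)\in\mathcal{E}^1(X,\omega,\psi)$ (Proposition \ref{prop:Proj}) and using $E_\psi(\psi)=0$, one has $d(\psi,u)=E_\psi(u)-2E_\psi(\phi)$, so $-d(\psi,u)=2E_\psi(\phi)-E_\psi(u)\le E_\psi(u)$, because $\phi\le u$ and $E_\psi$ is non-decreasing (Proposition \ref{prop:Decr}). It then suffices to observe $E_\psi(u)\le V_\psi\sup_X u$: indeed $u\le\psi+\sup_X(u-\psi)=\psi+\sup_X u$, and monotonicity of $E_\psi$ together with the direct computation $E_\psi(\psi+c)=cV_\psi$ (adding a constant leaves $\omega_\psi$ unchanged) gives the claim.

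\emph{The right inequality (the main point).} Here I would marry a Hartogs-type compactness estimate with Proposition \ref{prop:Ineq}. Compactness of $\{v\in PSH(X,\omega):\sup_X v=0\}$ in $L^1$ furnishes a constant $C>0$ with $\int_X v\,\omega^n\ge -C$ for every such $v$; since $MA_\omega(\psi)\le\omega^n$ and $v\le 0$, also $\int_X v\,MA_\omega(\psi)\ge -C$, and applying this to $v=u-\sup_X u$ produces $V_\psi\sup_X u\le\int_X u\,MA_\omega(\psi)+C$. As $MA_\omega(\psi)$ is supported on $\{\psi=0\}$, one has $\int_X u\,MA_\omega(\psi)=\int_X(u-\psi)MA_\omega(\psi)\le\int_X|u-\psi|\big(MA_\omega(u)+MA_\omega(\psi)\big)\le 3\cdot 2^{n+2}(n+1)\,d(\psi,u)$ by the upper bound in Proposition \ref{prop:Ineq}. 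Taking $A:=3\cdot 2^{n+2}(n+1)>1$ and $B:=C>0$ closes the estimate. The equality and the left inequality are routine consequences of monotonicity of $E_\psi$ and the identity $E_\psi(\psi+c)=cV_\psi$; the step requiring genuine input is the compactness bound $\int_X v\,MA_\omega(\psi)\ge -C$ on sup-normalized potentials, which I expect to be the main obstacle, since it is precisely what transfers control from $MA_\omega(\psi)$ (which sees $u$ only on $\{\psi=0\}$) back to $\sup_X u$, and it leans on both $MA_\omega(\psi)\le\omega^n$ and the Hartogs compactness of $\omega$-psh functions.
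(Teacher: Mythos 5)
Your proof is correct and follows essentially the same route as the paper's: the equality via $u-\sup_{X}u\leq\psi$ (through Theorem \ref{thm:samesing}), the left inequality via monotonicity of $E_{\psi}$ and $E_{\psi}(\psi+c)=cV_{\psi}$, and the right inequality by combining the uniform $L^{1}$ bound on sup-normalized potentials (the paper cites Proposition $2.7$ of \cite{GZ05}, which is your Hartogs compactness bound) with $MA_{\omega}(\psi)\leq \mathbbm{1}_{\{\psi=0\}}MA_{\omega}(0)$ from Theorem \ref{thm:3.8} and the double estimate of Proposition \ref{prop:Ineq} — your rearrangement even dispenses with the paper's case split on the sign of $\sup_{X}u$. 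One cosmetic slip: the inequality $\int_{X}|u-\psi|\big(MA_{\omega}(u)+MA_{\omega}(\psi)\big)\leq 3\cdot 2^{n+2}(n+1)\,d(\psi,u)$ is the \emph{left} (lower) bound of Proposition \ref{prop:Ineq} rearranged, not its upper bound, though the estimate you actually use is the correct one.
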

\begin{proof}
The equality follows from $u-\sup_{X}u\leq P_{\omega}[\psi]=\psi\leq 0$.\\
Next, if $\sup_{X}u\leq 0$ then the right inequality is trivial for any $A,B>0$ while the left inequality is a consequence of $d(\psi,u)=-E_{\psi}(u)\geq -V_{\psi}\sup_{X}(u-\psi)$ (Proposition \ref{prop:4.10}).\\
Therefore, let us assume $\sup_{X}u\geq 0$. By Proposition $2.7$ in \cite{GZ05} there exists an uniform bound $C>0$ such that
$$
\int_{X}|v-\sup_{X}v|MA_{\omega}(0)\leq C
$$
for any $v\in PSH(X,\omega)$. Hence, since by Theorem \ref{thm:3.8} $MA_{\omega}(\psi)\leq \mathbbm{1}_{\{\psi=0\}} MA_{\omega}(0)$, we also have
$$
\int_{X}|u-\sup_{X}u-\psi|MA_{\omega}(\psi)\leq\int_{X}|u-\sup_{X}u|MA_{\omega}(0)\leq C
$$
for any $u\in\mathcal{E}^{1}(X,\omega,\psi)$. So, by Proposition \ref{prop:Ineq},
\begin{multline*}
d(u,\psi)\geq D\int_{X}|u-\psi|MA_{\omega}(\psi)\geq DV_{\psi}\sup_{X}u-D\int_{X}|u-\sup_{X}u-\psi|MA_{\omega}(\psi)\geq DV_{\psi}\sup_{X}u-DC.
\end{multline*}
Take $A:=1/D$ and $B:=C$ to conclude the proof.
\end{proof}
\begin{prop}
\label{prop:Incr}
Let $\{u_{j}\}_{j\in\mathbbm{N}}\subset \mathcal{E}^{1}(X,\omega,\psi)$ be an increasing sequence uniformly bounded by above, and let $u:=\big(\lim_{j\to \infty}u_{j}\big)^{*}\in PSH(X,\omega)$. Then $u\in \mathcal{E}^{1}(X,\omega,\psi)$ and $E_{\psi}(u_{j})\to E_{\psi}(u)$ as $j\to \infty$.
\end{prop}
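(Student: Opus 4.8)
The plan is to obtain the membership $u\in\mathcal{E}^{1}(X,\omega,\psi)$ almost for free from monotonicity of mass and energy, and then to reduce the convergence $E_\psi(u_j)\to E_\psi(u)$ to the single statement $\int_X(u-u_j)MA_\omega(u_j)\to 0$, whose only delicate ingredient is a uniform tail estimate.

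First I would normalize. Since $\{u_j\}$ is uniformly bounded above, set $M:=\sup_j\sup_X u_j<\infty$ and $\phi_M:=P_\omega[\psi](M)$; this envelope has $\psi$-relative minimal singularities, so $\phi_M\in[\psi]$, $V_{\phi_M}=V_\psi$ and $E_\psi(\phi_M)<\infty$. Each $u_j$ satisfies $u_j\preccurlyeq\psi$ and $u_j\leq M$, hence $u_j\leq P_\omega(\psi+C,M)\leq\phi_M$ for $C$ large, and therefore $u\leq\phi_M$. This gives at once $u\preccurlyeq\phi_M\preccurlyeq\psi$ and $V_u\leq V_{\phi_M}=V_\psi$ by monotonicity of the mass. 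On the other hand $u_j\leq u$ forces $u_j\preccurlyeq u$, so $V_\psi=V_{u_j}\leq V_u$; thus $V_u=V_\psi$ and $u\in\mathcal{E}(X,\omega,\psi)$. Since $E_\psi$ is non-decreasing (Proposition \ref{prop:Decr}), $E_\psi(u_j)$ increases and $E_\psi(u_1)\leq E_\psi(u_j)\leq E_\psi(u)\leq E_\psi(\phi_M)<\infty$; in particular $E_\psi(u)\geq E_\psi(u_1)>-\infty$, so $u\in\mathcal{E}^{1}(X,\omega,\psi)$ and $\ell:=\lim_j E_\psi(u_j)\in\mathbbm{R}$ exists with $\ell\leq E_\psi(u)$.

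It remains to prove $\ell\geq E_\psi(u)$. As $u_j\leq u$, Lemma \ref{lem:Properties}$.(ii)$ gives $d(u_j,u)=E_\psi(u)-E_\psi(u_j)\geq 0$, while Proposition \ref{prop:4.10}$.(ii)$ yields the clean bound $0\leq E_\psi(u)-E_\psi(u_j)\leq\int_X(u-u_j)MA_\omega(u_j)$, so the whole statement reduces to $\int_X(u-u_j)MA_\omega(u_j)\to 0$. Since $u_j\nearrow u$ a.e. we have $u_j\to u$ in capacity (see \cite{GZ17}), and because the masses are constant, $V_{u_j}=V_\psi=V_u$, Theorem \ref{thm:conv} upgrades this to weak convergence $MA_\omega(u_j)\to MA_\omega(u)$; moreover applying the first part of Theorem \ref{thm:conv} to both $\chi$ and $K-\chi$ (for $0\leq\chi\leq K$ bounded quasi-continuous) together with the equality of masses shows $\int_X\chi\,MA_\omega(u_j)\to\int_X\chi\,MA_\omega(u)$ for every bounded quasi-continuous $\chi$.

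To exploit this I would truncate. For a fixed level $N>0$ set $u_j^N:=\max(u_j,u-N)$, so $u-N\leq u_j^N\leq u$, $u_j^N\nearrow u$, $0\leq u-u_j^N\leq N$, and $MA_\omega(u_j)=MA_\omega(u_j^N)$ on $\{u-u_j<N\}$ by plurifine locality. The bounded part $\int_{\{u-u_j<N\}}(u-u_j)MA_\omega(u_j)\leq\int_X(u-u_j^N)MA_\omega(u_j^N)$ tends to $0$ as $j\to\infty$ for each fixed $N$: using $u-u_j^N\leq u-u_k^N$ for $j\geq k$, letting first $j\to\infty$ (the function $u-u_k^N$ being bounded quasi-continuous) and then $k\to\infty$ by dominated convergence gives the claim. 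The remaining \emph{tail} $T_{j,N}:=\int_{\{u-u_j\geq N\}}(u-u_j)MA_\omega(u_j)$ is the crux: one must show $\sup_j T_{j,N}\to 0$ as $N\to\infty$. Here I would use the uniform energy bound $E_\psi(u_1)\leq E_\psi(u_j)\leq E_\psi(\phi_M)$, which via Proposition \ref{prop:4.10}$.(ii)$ gives $\int_X(\phi_M-u_j)MA_\omega(u_j)\leq(n+1)\big(E_\psi(\phi_M)-E_\psi(u_1)\big)=:C$ uniformly in $j$, together with the pointwise inequality $u-u_j\leq\phi_M-u_j$ and $\{u-u_j\geq N\}\subset\{\phi_M-u_j\geq N\}$; a comparison-principle argument (Proposition \ref{prop:Comp}), exactly as in the absolute and big settings of \cite{DDNL18a} and \cite{BEGZ10}, then controls the tails and yields $\sup_j T_{j,N}\to 0$. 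Combining the two parts gives $\limsup_j\int_X(u-u_j)MA_\omega(u_j)=\limsup_j T_{j,N}$ for every $N$, hence $=0$. The main obstacle is precisely this last tail/uniform-integrability estimate, because the $u_j$ need not have $\psi$-relative minimal singularities and $u-u_j$ is a priori unbounded; every other step is a soft consequence of monotonicity of $E_\psi$ and the weak continuity of the Monge–Ampère operator.
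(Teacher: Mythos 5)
Your overall architecture is correct and genuinely different in shape from the paper's: you get membership of $u$ from mass monotonicity (\cite{WN17}) plus monotonicity of $E_{\psi}$, reduce the convergence to $\int_{X}(u-u_{j})MA_{\omega}(u_{j})\to 0$ via Proposition \ref{prop:4.10}$.(ii)$, and handle the bounded part by truncating at levels of $u-u_{j}$, using plurifine locality and the quasi-continuity upgrade of Theorem \ref{thm:conv} — all of this is fine (the paper instead truncates with the $\psi$-relative canonical approximants $u_{j}^{k}=\max(u_{j},\psi-k)$, proves $E_{\psi}(u_{j}^{k})\searrow E_{\psi}(u_{j})$ \emph{uniformly in} $j$, and closes with a three-term estimate using Lemma $4.1$ of \cite{DDNL17b}). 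However, the one step you leave as a black box is exactly the content of the proposition, and the ingredients you assemble for it are insufficient. From the uniform bound $\int_{X}(\phi_{M}-u_{j})MA_{\omega}(u_{j})\leq C$ one gets only Chebyshev-type information such as $MA_{\omega}(u_{j})(\{\phi_{M}-u_{j}\geq N\})\leq C/N$; a uniform $L^{1}$ bound never implies uniform integrability, so no manipulation of $C$ alone can produce $\sup_{j}T_{j,N}\to 0$. The "comparison-principle argument exactly as in \cite{DDNL18a} and \cite{BEGZ10}" is not a routine transplant: it turns on a structural feature of your sequence that your sketch never uses at the tail stage, namely the fixed minorant $u_{1}\leq u_{j}$.

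Concretely, the missing idea — the key computation in the paper's proof — is the following. Normalize so that $\sup_{X}u\leq 0$, whence $u_{j}\leq u\leq\psi$ (Lemma \ref{lem:Bound}), set $v_{j,t}:=\frac{u_{j}+\psi-t}{2}$ and observe the inclusions $\{u_{j}\leq\psi-t\}\subset\{u_{1}\leq v_{j,t}\}\subset\{u_{1}\leq\psi-t/2\}$, the first because $u_{1}\leq u_{j}$ and the second because $u_{j}\leq\psi$. Since $\omega_{v_{j,t}}\geq\frac{1}{2}\omega_{u_{j}}$ one has $MA_{\omega}(u_{j})\leq 2^{n}MA_{\omega}(v_{j,t})$, and the comparison principle (Proposition \ref{prop:Comp} with $u_{1}\preccurlyeq v_{j,t}$, applied on the complementary set and combined with equality of the total masses, all equal to $V_{\psi}$) gives $MA_{\omega}(v_{j,t})(\{u_{1}\leq v_{j,t}\})\leq MA_{\omega}(u_{1})(\{u_{1}\leq v_{j,t}\})$. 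Chaining these, $MA_{\omega}(u_{j})(\{u_{j}\leq\psi-t\})\leq 2^{n}MA_{\omega}(u_{1})(\{u_{1}\leq\psi-t/2\})$ for every $j$ and $t$, and integrating in $t$ (layer-cake) dominates every tail by $2^{n+1}\int_{X}\big(\max(u_{1},\psi-k/2)-u_{1}\big)MA_{\omega}(u_{1})$, which tends to $0$ as $k\to\infty$ because $u_{1}\in\mathcal{E}^{1}(X,\omega,\psi)$ — a bound depending on $u_{1}$ alone, hence uniform in $j$. Once this is inserted your scheme closes: with $c:=\sup_{X}(u-\psi)<\infty$ one has $\{u-u_{j}\geq N\}\subset\{u_{j}\leq\psi-(N-c)\}$ and $u-u_{j}\leq\psi+c-u_{j}$, so $\sup_{j}T_{j,N}\to 0$. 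Without it, the proposal has a genuine gap at precisely the point you yourself flag as the crux.
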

\begin{proof}
Since $\sup_{X}u_{j}=\sup_{X}(u_{j}-\psi)$ is uniformly bounded, we immediately get that $u\leq \psi+C$ for a certain constant $C\in\mathbbm{R}$, i.e. $u\in PSH(X,\omega,\psi)$. Furthermore since $u\geq u_{j}$ for any $j\in\mathbbm{N}$ by construction, we also obtain $u\in\mathcal{E}^{1}(X,\omega,\psi)$ by \cite{WN17} and the monotonicity of $E_{\psi}$. Thus since $E_{\psi}(u)=\lim_{k\to \infty}E_{\psi}(u^{k})$, $E_{\psi}(u_{j})=\lim_{k\to \infty}E_{\psi}(u_{j}^{k})$ where $u^{k}:=\max(u,\psi-k)$, $u_{j}^{k}:=\max(u_{j},\psi-k)$ are the $\psi$-relative canonical approximants, it is enough to check that $E_{\psi}(u_{j}^{k})\searrow E_{\psi}(u_{j})$ as $k\to \infty$ uniformly in
$j$. Indeed this would imply
$$
E_{\psi}(u)-E_{\psi}(u_{j})\leq |E_{\psi}(u)-E_{\psi}(u^{k})|+|E_{\psi}(u^{k})-E_{\psi}(u_{j}^{k})|+|E_{\psi}(u_{j}^{k})-E_{\psi}(u_{j})|\to 0
$$
letting first $j\to \infty$ and then $k\to \infty$, since $|E_{\psi}(u^{k})-E_{\psi}(u_{j}^{k})|\to 0$ as $j\to \infty$ as a consequence of Lemma $4.1$ in \cite{DDNL17b} (see also Proposition \ref{lem:Key}).\\
Assume without loss of generality that $u\leq 0$. By Proposition \ref{prop:4.10} we have
\begin{equation}
\label{eqn:Last}
0\leq E_{\psi}(u_{j}^{k})-E_{\psi}(u_{j})\leq \int_{X}(u_{j}^{k}-u_{j})MA_{\omega}(u_{j})=\int_{k}^{+\infty}MA_{\omega}(u_{j})\big(\{u_{j}\leq \psi-t\}\big) dt.
\end{equation}
Next we set $v_{j,t}:=\frac{u_{j}+\psi-t}{2}$ and we note that the following inclusions hold:
$$
\{u_{j}\leq \psi-t\}\subset \{u_{1}\leq v_{j,t}\}\subset \{u_{1}\leq \psi-t/2\}.
$$
Indeed the first inclusion follows from $u_{1}\leq u_{j}$ while the last is a consequence $\sup_{X}u=\sup_{X}(u-\psi)$ (Lemma \ref{lem:Bound}). Thus, by the comparison principle (Proposition \ref{prop:Comp}) we have
\begin{multline*}
MA_{\omega}(u_{j})\big(\{u_{j}\leq \psi-t\}\big)\leq MA_{\omega}(u_{j})\big(\{u_{1}\leq v_{j,t}\}\big)\leq 2^{n}MA_{\omega}(v_{j,t})\big(\{u_{1}\leq v_{j,t}\}\big)\leq \\
\leq2^{n}MA_{\omega}(u_{1})\big(\{u_{1}\leq v_{j,t}\}\big)\leq 2^{n}MA_{\omega}(u_{1})\big(\{u_{1}\leq \psi-t/2\}\big).
\end{multline*}
Therefore, continuing the estimates in (\ref{eqn:Last}),
$$
0\leq E_{\psi}(u_{j}^{k})-E_{\psi}(u_{j})\leq 2^{n+1}\int_{k/2}^{+\infty}MA_{\omega}(u_{1})\big(\{u_{1}\leq \psi-t\}\big) dt=2^{n+1}\int_{X}(u_{1}^{k/2}-u_{1})MA_{\omega}(u_{1}),
$$
which concludes the proof since the right hand goes to $0$ as $k\to +\infty$ (recall that $u_{1}\in\mathcal{E}^{1}(X,\omega,\psi)$).
\end{proof}
\begin{thm}
$(\mathcal{E}^{1}(X,\omega,\psi),d)$ is complete.
\end{thm}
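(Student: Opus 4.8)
The plan is to show that every $d$-Cauchy sequence $\{u_k\}\subset\mathcal{E}^1(X,\omega,\psi)$ converges. The standard strategy in the absolute setting (cf.\ the proof of completeness in \cite{DDNL18a}) is to extract a monotone subsequence, identify its candidate limit, and then invoke the two monotone convergence results already established, namely Proposition \ref{prop:Decr} (continuity along decreasing sequences) and Proposition \ref{prop:Incr} (continuity along increasing, uniformly bounded-above sequences). Since $d$ is already known to be a genuine distance, it suffices to produce a convergent subsequence of any Cauchy sequence.

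First I would use Lemma \ref{lem:Bound} to control the sup-norms. A Cauchy sequence is $d$-bounded, so $d(\psi,u_k)$ is uniformly bounded, and hence by Lemma \ref{lem:Bound} the quantities $\sup_X u_k=\sup_X(u_k-\psi)$ are uniformly bounded, both above and below. This is the crucial compactness input. Next, passing to a subsequence so that $d(u_k,u_{k+1})\leq 2^{-k}$, I would build a decreasing auxiliary sequence by setting $v_k:=P_\omega(u_k,u_{k+1},\dots)$, i.e.\ the rooftop envelope of the tail; concretely, let $w_{k,m}:=P_\omega(u_k,u_{k+1},\ldots,u_{k+m})$, which decreases in $m$ to some $v_k\in PSH(X,\omega)$. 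Using the telescoping triangle inequality together with Lemma \ref{lem:Properties}.(iv) and Corollary \ref{cor:4b}, one shows $d(u_k,w_{k,m})\leq\sum_{i=k}^{k+m-1}d(u_i,u_{i+1})\leq 2^{-k+1}$, and that each $v_k\in\mathcal{E}^1(X,\omega,\psi)$ with $d(u_k,v_k)\leq 2^{-k+1}$. Here Proposition \ref{prop:Proj} guarantees that the rooftop envelopes stay in $\mathcal{E}^1(X,\omega,\psi)$ and that the energies converge along the decreasing sequences defining each $v_k$, so the $d$-estimate survives the limit in $m$.

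The sequence $\{v_k\}$ is increasing (each $v_k$ is a rooftop over a smaller tail, hence $v_k\leq v_{k+1}$) and uniformly bounded above by the sup-bound from Lemma \ref{lem:Bound}. I would therefore set $u:=(\lim_k v_k)^*$ and apply Proposition \ref{prop:Incr} to conclude $u\in\mathcal{E}^1(X,\omega,\psi)$ and $E_\psi(v_k)\to E_\psi(u)$. Combining this with Lemma \ref{lem:Properties}.(ii) (which gives $d(v_k,u)=E_\psi(u)-E_\psi(v_k)\to 0$, valid since $v_k\leq u$) yields $d(v_k,u)\to 0$. Finally, the triangle inequality gives
$$
d(u_k,u)\leq d(u_k,v_k)+d(v_k,u)\leq 2^{-k+1}+d(v_k,u)\longrightarrow 0,
$$
so the subsequence converges, and a Cauchy sequence with a convergent subsequence converges to the same limit.

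The main obstacle I anticipate is the bookkeeping around the rooftop envelopes of infinite tails: one must verify that $w_{k,m}\searrow v_k$ is genuinely in $\mathcal{E}^1(X,\omega,\psi)$ with a uniform energy bound (so that the limit is not the trivial $-\infty$ potential), and that the distance estimate $d(u_k,w_{k,m})\leq 2^{-k+1}$ passes to the limit in $m$. Both are handled by Proposition \ref{prop:Proj}, which is exactly the statement that rooftop envelopes of decreasing sequences in $\mathcal{E}^1(X,\omega,\psi)$ behave continuously for the energy; the uniform sup-bound from Lemma \ref{lem:Bound} then prevents mass loss. The other delicate point is ensuring the candidate limit $v_k$ is increasing in $k$ and bounded above so that Proposition \ref{prop:Incr} applies cleanly — this is where the Cauchy condition, rather than mere boundedness, is essential.
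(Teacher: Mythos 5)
Your proposal is correct and follows essentially the same route as the paper's proof: the same subsequence extraction with $d(u_k,u_{k+1})\leq 2^{-k}$, the same finite-tail rooftop envelopes $P_{\omega}(u_k,\dots,u_{k+m})$ decreasing to $v_k$ with the telescoping estimate $d(u_k,v_k)\leq 2^{-k+1}$, and the same conclusion via Lemma \ref{lem:Bound}, Proposition \ref{prop:Incr} and Lemma \ref{lem:Properties}.(ii). The only cosmetic difference is in the bookkeeping for why $v_k\in\mathcal{E}^{1}(X,\omega,\psi)$: the paper extracts the uniform energy bound from $d(\psi+C_j,v_{j,k})\leq d(\psi+C_j,u_j)+2^{-j+1}$ and invokes Proposition \ref{prop:Decr}, whereas you attribute it to Proposition \ref{prop:Proj} — strictly speaking that proposition only covers envelopes of pairs with limits already in $\mathcal{E}^{1}$, but your telescoping estimate together with Lemma \ref{lem:Properties}.(ii) (since $P_{\omega}(u_k,\dots,u_{k+m})\leq u_k$) supplies exactly the required uniform bound, so the argument goes through.
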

\begin{proof}
Let $\{u_{j}\}_{j\in\mathbbm{N}}\subset (\mathcal{E}^{1}(X,\omega,\psi),d)$ be a Cauchy sequence. Up to extract a subsequence we may assume that $d(u_{j},u_{j+1})\leq 2^{-j}$ for any $j\in \mathbbm{N}$. Define $v_{j,k}:=P_{\omega}(u_{j},\dots,u_{k})$ for $j,k\in\mathbbm{N}, k\geq j$, i.e.
$$
v_{j,k}=\sup\{v\in PSH(X,\omega)\,:\, v\leq \min (u_{j},\dots,u_{k})\}^{*}.
$$
Clearly $v_{j,k}=P_{\omega}(u_{j},v_{j+1,k})\leq v_{j+1,k}$ if $k\geq j+1$ and $v_{j,k}\in \mathcal{E}^{1}(X,\omega,\psi)$ as consequence of Proposition \ref{prop:Proj} since $P_{\omega}(P_{\omega}(u,v),w)=P_{\omega}(u,v,w)$ for any $u,v,w\in PSH(X,\omega)$. Thus for any $k\geq j+1$
$$
d(u_{j},v_{j,k})=d(u_{j},P_{\omega}(u_{j},v_{j+1,k}))\leq d(u_{j},v_{j+1,k})\leq d(u_{j},u_{j+1})+d(u_{j+1},v_{j+1,k})
$$
using Lemma \ref{lem:Properties}. Iterating the argument we get
$$
d(u_{j},v_{j,k})\leq \sum_{s=1}^{k-j}d(u_{j+s-1},u_{j+s})\leq \sum_{s=1}^{k-j}\frac{1}{2^{j+s-1}}\leq \sum_{s=j}^{\infty}\frac{1}{2^{s}}=\frac{1}{2^{j-1}}.
$$
Moreover, since $v_{j,k}$ is decreasing in $k$, there exists a constant $C_{j}\in\mathbbm{R}$ such that $v_{j,k}\leq \psi+C_j{}$ for any $k\geq j$. So
$$
C_{j}-E_{\psi}(v_{j,k})=d(\psi+C_{j},v_{j,k})\leq d(\psi+C_{j},u_{j})+d(u_{j},v_{j,k})\leq d(\psi+C_{j},u_{j})+2^{-j+1},
$$
which implies that $v_{j}:=\lim_{k\to\infty}v_{j,k}\in \mathcal{E}^{1}(X,\omega,\psi)$ by Proposition \ref{prop:Decr}, and $d(v_{j},u_{j})\leq 2^{-j+1}$ by continuity of the distance along decreasing sequences.\\
Next we observe that $v_{j}$ is increasing in $j$ and that
\begin{multline*}
V_{\psi}\sup_{X}(v_{j}-\psi)=V_{\psi}\sup_{X}v_{j}\leq Ad(\psi,v_{j})+B\leq\\
\leq A\Big(d(\psi,u_{1})+\sum_{s=1}^{j-1}d(u_{s},u_{s+1})+d(u_{j},v_{j})\Big)+B\leq Ad(\psi,u_{1})+3A+B
\end{multline*}
where the first inequality is the content of Lemma \ref{lem:Bound}. Hence Proposition \ref{prop:Incr} leads to $u:=\big(\lim_{j\to\infty}v_{j}\big)^{*}\in \mathcal{E}^{1}(X,\omega,\psi)$ and to
$$
d(u_{j},u)\leq d(u_{j},v_{j})+d(v_{j},u)\leq 2^{-j+1}+d(v_{j},u)\to 0
$$
for $j\to \infty$.
\end{proof}
\begin{rem}
\label{rem:ZeroMass}
\emph{In the case $\psi\in\mathcal{M}\setminus\mathcal{M}^{+}$, i.e. if $\psi$ is a model type envelope with zero mass $V_{\psi}=0$, then
$$
PSH(X,\omega,\psi)=\mathcal{E}(X,\omega,\psi)=\mathcal{E}^{1}(X,\omega,\psi).
$$
Indeed any function $u\in PSH(X,\omega,\psi)$ has zero mass $V_{u}=0$ (\cite{WN17}). Thus $E_{\psi}(u)=0$ by definition as for any $\phi$ with $\psi$-minimal singularities we clearly have $|E_{\psi}(\phi)|\leq V_{\psi}\sup_{X}|\phi-\psi|=0$. In particular $d(u,v):=E_{\psi}(u)+E_{\psi}(v)-2E_{\psi}\big(P_{\omega}(u,v)\big)=0$ for any $u,v\in\mathcal{E}^{1}(X,\omega,\psi)$.\\
Moreover if $P_{\omega}\big(\psi_{1},\psi_{2}\big)\not\equiv -\infty$ for $\psi_{1},\psi_{2}$ model type envelopes with zero masses $V_{\psi_{1}}=V_{\psi_{2}}=0$ then $\mathcal{E}^{1}(X,\omega,\psi_{1})\cap\mathcal{E}^{1}(X,\omega,\psi_{2})=\mathcal{E}^{1}(X,\omega,P_{\omega}[P_{\omega}(\psi_{1},\psi_{2})])$ is not empty.}
\end{rem}
\section{The metric space $(X_{\mathcal{A}},d_{\mathcal{A}})$ and consequences.}
\label{sec:5}
In this section we will prove the main Theorem \ref{thmC}, i.e. assuming $\mathcal{A}\subset \mathcal{M}^{+}$ totally ordered subset (recall the the partial order $\preccurlyeq$ coincides with the order $\leq$ on $\mathcal{M}$), we will endow the space $X_{\mathcal{A}}:=\bigsqcup_{\psi\in\mathcal{\overline{A}}}\mathcal{E}^{1}(X,\omega,\psi)$ with a metric topology. Here $\mathcal{\overline{A}}$ denotes the closure of $\mathcal{A}$ as subset of $PSH(X,\omega)$ with its $L^{1}$-topology. \\
We will show that $\mathcal{\overline{A}}\subset \mathcal{M}$ and we will define a natural distance $d_{\mathcal{A}}$ on $X_{\mathcal{A}}$ which extends the distance $d$ (Theorem \ref{thmA}) on $\mathcal{E}^{1}(X,\omega,\psi)$ for any $\psi\in\mathcal{\overline{A}}$ where if $\psi=\inf \mathcal{A}\in\mathcal{M}\setminus \mathcal{M}^{+}$ we identify the space $\mathcal{E}^{1}(X,\omega,\psi)$ with a point as $d_{\mathcal{A}}=d\equiv 0$ (Remark \ref{rem:ZeroMass}).\\
We recall that the distance $d$ on $\mathcal{E}^{1}(X,\omega,\psi)$ for $\psi\in\mathcal{M}^{+}$ is defined as
$$
d(u,v)=E_{\psi}(u)+E_{\psi}(v)-2E_{\psi}\big(P_{\omega}(u,v)\big).
$$
\begin{defn}
\label{defn:Strong}
Given $\psi\in \mathcal{M}^{+}$, the \emph{strong topology} on $\mathcal{E}^{1}(X,\omega,\psi)$ is defined as the metric topology given by the distance $d$.
\end{defn}
In the case $\psi=0$ the strong topology was introduced in \cite{BBEGZ16} (Definition $2.1.$), see also Proposition $5.9$ and Theorem $5.5.$ in \cite{Dar14}.
\subsection{The contraction property of $d$.}
\label{ssec:NewDistance}
\begin{lem}
\label{lem:Pro}
Let $\psi,\psi_{1},\psi_{2}\in \mathcal{M}$ such that $\psi_{2}\preccurlyeq\psi_{1}\preccurlyeq\psi$. Then:
\begin{itemize} 
\item[i)] $P_{\omega}[\psi_{2}](P_{\omega}[\psi_{1}](u))=P_{\omega}[\psi_{2}](u)$ for any $u\in \mathcal{E}^{1}(X,\omega,\psi)$;
\item[ii)] $P_{\omega}[\psi_{1}](\mathcal{E}^{1}(X,\omega,\psi))\subset \mathcal{E}^{1}(X,\omega,\psi_{1})$;
\item[iii)] for any $u,v\in\mathcal{E}^{1}(X,\omega,\psi)$ such that $u-v$ is globally bounded, $||P_{\omega}[\psi_{1}](u)-P_{\omega}[\psi_{2}](v)||_{L^{\infty}}\leq ||u-v||_{L^{\infty}}$ and in particular $P_{\omega}[\psi_{1}](u)$ has $\psi_{1}$-relative minimal singularities for any $u\in \mathcal{E}^{1}(X,\omega,\psi)$ with $\psi$-relative minimal singularities.
\end{itemize}
\end{lem}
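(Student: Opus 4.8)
The plan is to first record a handful of elementary monotonicity and scaling properties of the rooftop operator $P_{\omega}[\cdot](\cdot)$, and then to deduce (i), (iii) and (ii) in that order, since the $L^{\infty}$-contraction in (iii) is exactly what feeds the limiting argument in (ii). The facts I would establish at the outset, for $\phi\in\mathcal{M}$ and $\omega$-psh functions bounded above, are: the map $v\mapsto P_{\omega}[\phi](v)$ is non-decreasing; the exact scaling identity $P_{\omega}[\phi](v+c)=P_{\omega}[\phi](v)+c$ for $c\in\mathbbm{R}$ (which follows from $P_{\omega}(\phi+C,v)+c=P_{\omega}(\phi+C+c,v+c)$ and letting $C\to+\infty$); the monotonicity $\phi\mapsto P_{\omega}[\phi](v)$, so that $\psi_{2}\leq\psi_{1}$ on $\mathcal{M}$ gives $P_{\omega}[\psi_{2}](v)\leq P_{\omega}[\psi_{1}](v)$; and $P_{\omega}[\phi](0)=\phi$ since $\phi$ is a model type envelope. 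In particular, as every $u\in\mathcal{E}^{1}(X,\omega,\psi)$ satisfies $u\leq\sup_{X}u$, monotonicity and scaling give $P_{\omega}[\psi_{1}](u)\leq\psi_{1}+\sup_{X}u$, i.e. $P_{\omega}[\psi_{1}](u)\preccurlyeq\psi_{1}$; this already yields $P_{\omega}[\psi_{1}](u)\in PSH(X,\omega,\psi_{1})$.

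For (i), set $w:=P_{\omega}[\psi_{1}](u)$. From $\psi_{2}\leq\psi_{1}$ I get $P_{\omega}[\psi_{2}](u)\leq P_{\omega}[\psi_{1}](u)=w$, and since $w\leq u$ monotonicity yields $P_{\omega}[\psi_{2}](w)\leq P_{\omega}[\psi_{2}](u)$. For the reverse inequality I use that $P_{\omega}[\psi_{2}](u)\preccurlyeq\psi_{2}$ and $P_{\omega}[\psi_{2}](u)\leq w$, so $P_{\omega}[\psi_{2}](u)$ is an admissible competitor in the envelope defining $P_{\omega}[\psi_{2}](w)$, whence $P_{\omega}[\psi_{2}](u)\leq P_{\omega}[\psi_{2}](w)$. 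Combining gives $P_{\omega}[\psi_{2}]\big(P_{\omega}[\psi_{1}](u)\big)=P_{\omega}[\psi_{2}](u)$; note this uses only that $u$ is bounded above.

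For (iii) the content is the $L^{\infty}$-contraction of a single projection (i.e. with the same model $\psi_{1}$ on both sides; the $\psi_{2}$ in the displayed formula must be a typo, since taking $u=v=0$ would otherwise force $\psi_{1}=P_{\omega}[\psi_{1}](0)=P_{\omega}[\psi_{2}](0)=\psi_{2}$). Writing $M:=\|u-v\|_{L^{\infty}}$, from $v-M\leq u\leq v+M$ and the scaling identity I obtain $P_{\omega}[\psi_{1}](v)-M\leq P_{\omega}[\psi_{1}](u)\leq P_{\omega}[\psi_{1}](v)+M$, i.e. $\|P_{\omega}[\psi_{1}](u)-P_{\omega}[\psi_{1}](v)\|_{L^{\infty}}\leq M$. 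For the ``in particular'' clause I first record $P_{\omega}[\psi_{1}](\psi)=\psi_{1}$: indeed $\psi_{1}\leq\psi$ makes $\psi_{1}$ a competitor, giving $\psi_{1}\leq P_{\omega}[\psi_{1}](\psi)$, while $\psi\leq 0$ gives $P_{\omega}[\psi_{1}](\psi)\leq P_{\omega}[\psi_{1}](0)=\psi_{1}$. Then if $u$ has $\psi$-relative minimal singularities, $|u-\psi|\leq C$, and the contraction applied to $u,\psi$ gives $|P_{\omega}[\psi_{1}](u)-\psi_{1}|\leq C$, i.e. $P_{\omega}[\psi_{1}](u)$ has $\psi_{1}$-relative minimal singularities.

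Finally (ii), where the real work lies. I may assume $V_{\psi_{1}}>0$ (else $\mathcal{E}^{1}(X,\omega,\psi_{1})=PSH(X,\omega,\psi_{1})$ and nothing is needed) and, subtracting a constant, $u\leq\psi$. Let $u_{j}:=\max(u,\psi-j)\searrow u$ and $\phi_{j}:=P_{\omega}[\psi_{1}](u_{j})$; by (iii) each $\phi_{j}$ has $\psi_{1}$-relative minimal singularities, hence lies in $\mathcal{E}^{1}(X,\omega,\psi_{1})$, and $\phi_{j}\searrow\phi:=P_{\omega}[\psi_{1}](u)$ (the decreasing limit is $\leq u$ and $\preccurlyeq\psi_{1}$, hence a competitor for $P_{\omega}[\psi_{1}](u)$, while $\phi_{j}\geq\phi$). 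The crux is a uniform lower bound for $E_{\psi_{1}}(\phi_{j})$. By Proposition~\ref{prop:4.10} and $E_{\psi_{1}}(\psi_{1})=0$ one has $E_{\psi_{1}}(\phi_{j})\geq\int_{X}(\phi_{j}-\psi_{1})MA_{\omega}(\phi_{j})=-\int_{X}(\psi_{1}-\phi_{j})MA_{\omega}(\phi_{j})$; since $\phi_{j}\leq\psi_{1}$ and $u_{j}$ is less singular than $\psi_{1}$, Theorem~\ref{thm:3.8} gives $MA_{\omega}(\phi_{j})\leq\mathbbm{1}_{\{\phi_{j}=u_{j}\}}MA_{\omega}(u_{j})$, so on the contact set $\psi_{1}-\phi_{j}=\psi_{1}-u_{j}\leq\psi-u_{j}$ and thus $\int_{X}(\psi_{1}-\phi_{j})MA_{\omega}(\phi_{j})\leq\int_{X}(\psi-u_{j})MA_{\omega}(u_{j})$ (the integrand being non-negative as $u_{j}\leq\psi$). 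The right-hand side is at most $(n+1)|E_{\psi}(u)|$ uniformly in $j$, by the upper estimate in Proposition~\ref{prop:4.10}(ii) together with $E_{\psi}(u_{j})\geq E_{\psi}(u)>-\infty$; hence $E_{\psi_{1}}(\phi_{j})\geq-(n+1)|E_{\psi}(u)|$ for all $j$, and continuity of $E_{\psi_{1}}$ along decreasing sequences (Proposition~\ref{prop:Decr}) yields $E_{\psi_{1}}(\phi)>-\infty$. By the characterization of finite $\psi_{1}$-energy recalled in Section~\ref{sec:Preli} this also forces $V_{\phi}=V_{\psi_{1}}$, so $\phi\in\mathcal{E}^{1}(X,\omega,\psi_{1})$. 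The main obstacle is precisely this uniform energy estimate: the $\phi_{j}$ belong to a different reference class than the $u_{j}$, and the only mechanism transferring the mass of the projection back to the finite, concrete quantity $\int_{X}(\psi-u_{j})MA_{\omega}(u_{j})$ is the contact-set inequality of Theorem~\ref{thm:3.8} coupled with $\phi_{j}\leq\psi_{1}\leq\psi$ and the reduction to $u\leq\psi$.
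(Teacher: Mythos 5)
Your proof is correct and takes essentially the same route as the paper's: (i) via monotonicity of the envelope operators, (iii) via the scaling identity $P_{\omega}[\psi_{1}](v\pm C)=P_{\omega}[\psi_{1}](v)\pm C$, and (ii) by projecting the $\psi$-relative canonical approximants and obtaining the uniform energy bound from the contact-set inequality of Theorem \ref{thm:3.8} together with Proposition \ref{prop:4.10} and continuity of $E_{\psi_{1}}$ along decreasing sequences (Proposition \ref{prop:Decr}) — your normalization $u\leq\psi$ merely replaces the paper's bookkeeping with $D=\sup_{X}u$. Your reading of the $\psi_{2}$ in (iii) as a typo for $\psi_{1}$ is also consistent with the paper, whose own proof establishes precisely the same-projection contraction $\|P_{\omega}[\psi_{1}](u)-P_{\omega}[\psi_{1}](v)\|_{L^{\infty}}\leq\|u-v\|_{L^{\infty}}$.
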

\begin{proof}
The inequality $P_{\omega}[\psi_{1}](u)\leq u$ immediately implies $P_{\omega}[\psi_{2}](P_{\omega}[\psi_{1}](u))\leq P_{\omega}[\psi_{2}](u)$. Vice versa $P_{\omega}[\psi_{2}](u)\leq P_{\omega}[\psi_{1}](u)$ since $\psi_{2}\preccurlyeq \psi_{1}$. Thus the first point follows applying $P_{\omega}[\psi_{2}](\cdot) $ to both sides.\\
For the third statement, letting $C:=||u-v||_{L^{\infty}}$, it is an easy consequence of the definition that
$$
P_{\omega}[\psi_{1}](u)\geq P_{\omega}[\psi_{1}](v-C)=P_{\omega}[\psi_{1}](v)-C.
$$
By symmetry, we also have $P_{\omega}[\psi_{1}](u)\leq P_{\omega}[\psi_{1}](v)+C$ which gives $(iii)$. This immediately yields $(ii)$ if $\psi_{1}\in\mathcal{M}\setminus \mathcal{M}^{+}$. Therefore it remains to prove $(ii)$ assuming $\psi_{1}\in\mathcal{M}^{+}$. Letting $u_{j}:=\max(u,\psi-j)$ be the $\psi$-relative canonical approximants of a generic $u\in \mathcal{E}^{1}(X,\omega,\psi)$, we get that $v:=P_{\omega}[\psi_{1}](u)$ belongs to $\mathcal{E}^{1}(X,\omega,\psi_{1})$ if and only if $\int_{X}(\psi_{1}-v_{j})MA_{\omega}(v_{j})$ is uniformly bounded in $j$ where $v_{j}:=P_{\omega}[\psi_{1}](u_{j})$ (see Proposition \ref{prop:Decr}). But taking $D:=\sup_{X} u>0$ and using Theorem \ref{thm:3.8} we get
\begin{multline*}
DV_{\psi_{1}}+\int_{X}(\psi_{1}-v_{j})MA_{\omega}(v_{j})\leq \int_{\{v_{j}=u_{j}\}}(\psi+D-u_{j})MA_{\omega}(u_{j})\leq\\
\leq DV_{\psi} +\int_{X}(\psi-u_{j})MA_{\omega}(u_{j})<E\in\mathbbm{R}
\end{multline*}
for an uniform $E\in\mathbbm{R}$ since $u\in \mathcal{E}^{1}(X,\omega,\psi)$ and $u_{j}\searrow u$.
\end{proof}
We are now ready to prove the following key property of the distance $d$.
\begin{prop}
\label{prop:Dist}
Let $\psi,\psi'\in \mathcal{M}$ such that $\psi'\preccurlyeq \psi$. Then the map
$$
P_{\omega}[\psi'](\cdot):\big(\mathcal{E}^{1}(X,\omega,\psi),d\big)\to \big(\mathcal{E}^{1}(X,\omega,\psi'),d\big)
$$
is a Lipschitz map of Lipschitz constant equal to $1$, i.e.
$$
d(u,v)\geq d(P_{\omega}[\psi'](u),P_{\omega}[\psi'](v))
$$
for any $u,v\in \mathcal{E}^{1}(X,\omega,\psi)$.
\end{prop}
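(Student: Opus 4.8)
The plan is to reduce the Lipschitz estimate to a one-variable comparison of energies, and then to establish the latter by a derivative formula along the projected affine path. Writing $\tilde w:=P_\omega[\psi'](w)$ for $w\in\mathcal{E}^1(X,\omega,\psi)$, I would first record the algebraic identity
$$
P_\omega\big(\tilde u,\tilde v\big)=P_\omega[\psi']\big(P_\omega(u,v)\big).
$$
This follows purely from the monotonicity of $P_\omega[\psi'](\cdot)$ together with a two-sided comparison: the inequality ``$\geq$'' holds because $P_\omega(u,v)\le u,v$ forces its $\psi'$-projection below both $\tilde u$ and $\tilde v$, hence below $P_\omega(\tilde u,\tilde v)$; the inequality ``$\leq$'' holds because $P_\omega(\tilde u,\tilde v)$ is a $\psi'$-relative potential bounded above by $\min(u,v)$, hence by $P_\omega(u,v)$, and therefore by its $\psi'$-projection. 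Setting $w:=P_\omega(u,v)$ and combining this identity with Lemma \ref{lem:Properties} $(ii)$ and $(iv)$, both distances split as
$$
d(u,v)=\big(E_\psi(u)-E_\psi(w)\big)+\big(E_\psi(v)-E_\psi(w)\big),\qquad d(\tilde u,\tilde v)=\big(E_{\psi'}(\tilde u)-E_{\psi'}(\tilde w)\big)+\big(E_{\psi'}(\tilde v)-E_{\psi'}(\tilde w)\big).
$$
It therefore suffices to prove the reduced inequality: for all $a\le b$ in $\mathcal{E}^1(X,\omega,\psi)$,
$$
E_\psi(b)-E_\psi(a)\ \geq\ E_{\psi'}\big(P_\omega[\psi'](b)\big)-E_{\psi'}\big(P_\omega[\psi'](a)\big),
$$
applied to the pairs $(w,u)$ and $(w,v)$ and then summed.

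For the reduced inequality I would first pass to $\psi$-relative minimal singularities. Replacing $a,b$ by their canonical approximants $a^k:=\max(a,\psi-k)\searrow a$ and $b^k:=\max(b,\psi-k)\searrow b$ (which still satisfy $a^k\le b^k$), Proposition \ref{prop:Decr} gives $E_\psi(a^k)\searrow E_\psi(a)$ and $E_\psi(b^k)\searrow E_\psi(b)$. Moreover, by Lemma \ref{lem:Pro} $(ii)$ and $(iii)$ the potentials $P_\omega[\psi'](a^k),P_\omega[\psi'](b^k)$ lie in $\mathcal{E}^1(X,\omega,\psi')$ with $\psi'$-relative minimal singularities; being monotone and bounded above by $a^k$ (resp. $b^k$), their decreasing limits are $\psi'$-relative potentials below $a$ (resp. $b$), hence coincide with $P_\omega[\psi'](a)$ (resp. $P_\omega[\psi'](b)$). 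A further application of Proposition \ref{prop:Decr} in the $\psi'$-space then passes the inequality to the limit. So I may assume $a\le b$ have $\psi$-relative minimal singularities; in particular $b-a$ is globally bounded, and by Lemma \ref{lem:Pro} $(iii)$ the path $\tilde a_t:=P_\omega[\psi'](a_t)$, with $a_t:=(1-t)a+tb$, satisfies $\|\tilde a_{t+s}-\tilde a_t\|_{L^\infty}\le s\,\|b-a\|_{L^\infty}$.

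The core step, modelled on the moving--contact--set computation of Proposition \ref{prop:1b}, is the derivative formula
$$
\frac{d}{dt}E_{\psi'}(\tilde a_t)=\int_X(b-a)\,MA_\omega(\tilde a_t).
$$
I would obtain it by sandwiching $E_{\psi'}(\tilde a_{t+s})-E_{\psi'}(\tilde a_t)$ between $\int_X(\tilde a_{t+s}-\tilde a_t)MA_\omega(\tilde a_{t+s})$ and $\int_X(\tilde a_{t+s}-\tilde a_t)MA_\omega(\tilde a_t)$ via Proposition \ref{prop:4.10}$.(iii)$, then invoking Theorem \ref{thm:3.8}, which concentrates $MA_\omega(\tilde a_t)$ on $\{\tilde a_t=a_t\}$ and $MA_\omega(\tilde a_{t+s})$ on $\{\tilde a_{t+s}=a_{t+s}\}$: on these contact sets $\tilde a_{t+s}-\tilde a_t$ is pinched against $a_{t+s}-a_t=s(b-a)$ from the correct side, so dividing by $s$ and letting $s\to0^+$ (the measures converging weakly by the uniform convergence above and Theorem \ref{thm:conv}) yields the formula. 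Integrating gives $E_{\psi'}(\tilde b)-E_{\psi'}(\tilde a)=\int_0^1\!\int_X(b-a)MA_\omega(\tilde a_t)\,dt$, while the standard energy derivative gives $E_\psi(b)-E_\psi(a)=\int_0^1\!\int_X(b-a)MA_\omega(a_t)\,dt$. Since $b-a\ge0$ and Theorem \ref{thm:3.8} yields $MA_\omega(\tilde a_t)\le\mathbbm{1}_{\{\tilde a_t=a_t\}}MA_\omega(a_t)\le MA_\omega(a_t)$, a termwise comparison of the two integrands closes the argument. The step I expect to be the main obstacle is the rigorous justification of this derivative formula—controlling the contact sets and the weak convergence of $MA_\omega(\tilde a_t)$ against the bounded quasi-continuous weight $b-a$—which is precisely where the concentration result of Theorem \ref{thm:3.8} and the continuity properties of the envelope operator must be combined.
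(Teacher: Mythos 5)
Your proposal is correct, but it takes a genuinely different route from the paper's. The paper never isolates your two key ingredients --- the commutation identity $P_{\omega}\big(P_{\omega}[\psi'](u),P_{\omega}[\psi'](v)\big)=P_{\omega}[\psi']\big(P_{\omega}(u,v)\big)$ and the monotone comparison $E_{\psi}(b)-E_{\psi}(a)\geq E_{\psi'}\big(P_{\omega}[\psi'](b)\big)-E_{\psi'}\big(P_{\omega}[\psi'](a)\big)$. Instead it introduces the one-sided functional $\rho(u,v)=\int_{X}(u-v)MA_{\omega}(v)$ for comparable pairs, shows via Theorem \ref{thm:3.8} that $\rho$ contracts under $P_{\omega}[\psi'](\cdot)$, defines an auxiliary chain pseudo-distance $\tilde{d}$ as an infimum of sums of $\rho$ over monotone chains, and proves $\tilde{d}=d$ by discretizing the affine segment $w_{j,N}:=\frac{j}{N}u+\frac{N-j}{N}v$ and evaluating the resulting Riemann sums with Lemma \ref{lem:Contazzo}; the contraction of $d=\tilde{d}$ then follows termwise. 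Your argument is precisely the infinitesimal version of this mechanism: your derivative formula $\frac{d}{dt}E_{\psi'}(\tilde{a}_{t})=\int_{X}(b-a)MA_{\omega}(\tilde{a}_{t})$ is the continuous limit of the paper's Riemann sums, and your comparison $MA_{\omega}(\tilde{a}_{t})\leq \mathbbm{1}_{\{\tilde{a}_{t}=a_{t}\}}MA_{\omega}(a_{t})\leq MA_{\omega}(a_{t})$ against the nonnegative weight $b-a$ plays the role of the termwise contraction of $\rho$. What your route buys: it avoids the chain construction, the two-sided verification $\tilde{d}=d$, and Lemma \ref{lem:Contazzo} altogether; what it costs is the rigorous derivative formula along the projected path, but this is a routine variant of the paper's own Proposition \ref{prop:1b} (the sandwich from Proposition \ref{prop:4.10}$.(iii)$, concentration on the contact sets from Theorem \ref{thm:3.8}, uniform continuity of $t\mapsto\tilde{a}_{t}$ from Lemma \ref{lem:Pro}$.(iii)$, and Theorem \ref{thm:conv} applied to the bounded quasi-continuous weights $b-a$ and $C-(b-a)$ to pass to the limit, with Lipschitz continuity in $t$ justifying the integration). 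Your auxiliary steps check out: the ``$\leq$'' direction of the commutation identity correctly uses that $P_{\omega}(\tilde{u},\tilde{v})\preccurlyeq\psi'$ (e.g. $P_{\omega}(\tilde{u},\tilde{v})\leq\tilde{u}\leq\psi'+\sup_{X}u$, since $P_{\omega}[\psi'](c)=\psi'+c$ for constants), and the identification of the decreasing limit of $P_{\omega}[\psi'](a^{k})$ with $P_{\omega}[\psi'](a)$ uses the maximality of the envelope in the same way. Two cosmetic points: you should dispatch the case $\psi'\in\mathcal{M}\setminus\mathcal{M}^{+}$ at the outset via Remark \ref{rem:ZeroMass} (there $d\equiv 0$ on the target, and Lemma \ref{lem:Properties} and Proposition \ref{prop:4.10} in the $\psi'$-space presuppose $V_{\psi'}>0$), and the standard affine-path formula $E_{\psi}(b)-E_{\psi}(a)=\int_{0}^{1}\int_{X}(b-a)MA_{\omega}(a_{t})\,dt$ you invoke follows directly from Proposition \ref{prop:4.10}$.(i)$ for potentials with $\psi$-relative minimal singularities.
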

\begin{proof}
Let $u,v\in \mathcal{E}^{1}(X,\omega,\psi)$. Set
$$
\rho(u,v):=\int_{X}(u-v)MA_{\omega}(v).
$$
if $u\geq v$ and $\rho(u,v):=\rho(v,u)$ if $v\geq u$. Proposition \ref{prop:4.10} implies $ d(u,v)\leq \rho(u,v)$. Moreover assuming $\psi'\in \mathcal{M}^{+}$ such that $\psi'\preccurlyeq \psi$ as in the statement of the Proposition,
\begin{multline*}
\rho(P_{\omega}[\psi'](u),P_{\omega}[\psi'](v))=\int_{X}\big(P_{\omega}[\psi'](u)-P_{\omega}[\psi'](v)\big)MA_{\omega}\big(P_{\omega}[\psi'](v)\big)\leq\\
\leq \int_{\{P_{\omega}[\psi'](v)=v\}}(u-v)MA_{\omega}(v)\leq \rho(u,v)
\end{multline*}
by Theorem \ref{thm:3.8}. Therefore we define for any $u,v\in\mathcal{E}^{1}(X,\omega,\psi)$
$$
\tilde{d}(u,v):=\inf\Big( \rho(u,w_{1})+ \sum_{j=1}^{m-1}\rho(w_{j},w_{j+1})+\rho(w_{m},v)\Big)
$$
where the infimum is over all chain $\mathcal{C}=\{u=w_{0},w_{1},\dots,w_{m},w_{m+1}=v\}$ for any $m\in \mathbbm{N}$ such that any pair of consecutive elements in the chain $w_{j},w_{j+1}$ satisfies $w_{j}\geq w_{j+1}$ or $w_{j}\leq w_{j+1}$.\\
Clearly $\tilde{d}(u,v)\geq d(u,v)$ and it is straightforward to check that $\tilde{d}$ is symmetric and that it satisfies the triangle inequality. Moreover by construction and Lemma \ref{lem:Pro} we also have $\tilde{d}\big(P_{\omega}[\psi'](u),P_{\omega}[\psi'](v)\big)\leq \tilde{d}(u,v)$ since $\rho$ has the same property and $P_{\omega}[\psi'](u)\leq P_{\omega}[\psi'](v)$ if $u\leq v$. Thus to conclude the proof it remains to prove that $\tilde{d}\leq d$, which would imply $\tilde{d}=d$.\\
We first observe that it is enough to show that $\tilde{d}(u,v)\leq d(u,v)$ assuming $u\geq v$ since it would lead to
\begin{multline*}
\tilde{d}(w_{1},w_{2})\leq \tilde{d}\big(w_{1}, P_{\omega}(w_{1},w_{2})\big)+\tilde{d}\big(w_{2},P_{\omega}(w_{1},w_{2})\big)\leq\\
\leq d\big(w_{1},P_{\omega}(w_{1},w_{2})\big)+d\big(w_{2},P_{\omega}(w_{1},w_{2})\big)=d(w_{1},w_{2})
\end{multline*}
by Lemma \ref{lem:Properties}$.(iv)$. Therefore let $u\geq v$, fix $N\in\mathbbm{N}$ and set $w_{j,N}:=\frac{j}{N}u+\frac{N-j}{N}v$ for $j=0,\dots,N$. Then $\mathcal{C}_{N}:=\{w_{0,N},\dots,w_{N,N}\}$ is an admissible chain for the definition of $\tilde{d}(u,v)$. So
\begin{gather*}
\tilde{d}(u,v)\leq\sum_{j=0}^{N-1}\rho(w_{j,N},w_{j+1,N})=\sum_{j=0}^{N-1}\frac{1}{N}\int_{X}(u-v)MA_{\omega}(w_{j,N})=\\
=\sum_{s=0}^{n}\binom{n}{s}\Bigg(\frac{1}{N}\sum_{j=0}^{N-1}\Big(\frac{j}{N}\Big)^{s}\Big(\frac{N-j}{N}\Big)^{n-s}\Bigg)\int_{X}(u-v)MA_{\omega}(u^{s},v^{n-s}).
\end{gather*}
Next by Lemma \ref{lem:Contazzo} below for any $s=0,\dots,n$,
$$
\frac{1}{N}\sum_{j=0}^{N-1}\Big(\frac{j}{N}\Big)^{s}\Big(\frac{N-j}{N}\Big)^{n-s}\longrightarrow \frac{1}{\binom{n}{s}(n+1)},
$$
as $N\to \infty$. Hence we get
$$
\tilde{d}(u,v)\leq\frac{1}{n+1}\sum_{s=0}^{n}\int_{X}(u-v)MA_{\omega}(u^{s},v^{n-s})=d(u,v),
$$
which concludes the proof.
\end{proof}
\begin{lem}
\label{lem:Contazzo}
Let $n,N\in\mathbbm{N}$ and let $s$ be a non negative integer such that $s\leq n$. Then
\begin{equation}
\label{eqn:Stran}
\lim_{N\to \infty}\frac{1}{N}\sum_{j=0}^{N-1}\Big(\frac{j}{N}\Big)^{s}\Big(\frac{N-j}{N}\Big)^{n-s}=\frac{1}{\binom{n}{s}(n+1)}
\end{equation}
\end{lem}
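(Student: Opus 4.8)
The plan is to recognize the left-hand side of \eqref{eqn:Stran} as a Riemann sum. Writing $x_{j}:=j/N$, the summand equals $x_{j}^{s}(1-x_{j})^{n-s}$, and the prefactor $1/N$ is exactly the mesh of the uniform partition of $[0,1]$ into $N$ subintervals. Since $f(x):=x^{s}(1-x)^{n-s}$ is continuous on $[0,1]$, it is Riemann integrable, so the left-endpoint sums $\frac{1}{N}\sum_{j=0}^{N-1}f(x_{j})$ converge to $\int_{0}^{1}f(x)\,dx$ as $N\to\infty$. The omission of the $j=N$ term is harmless, as it would contribute only $f(1)/N=0$; more generally any single endpoint contributes $O(1/N)$ and does not affect the limit.

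It then remains to evaluate $\int_{0}^{1}x^{s}(1-x)^{n-s}\,dx$. This is the Euler Beta integral $B(s+1,n-s+1)=\frac{\Gamma(s+1)\Gamma(n-s+1)}{\Gamma(n+2)}=\frac{s!\,(n-s)!}{(n+1)!}$. To keep the argument self-contained one may instead prove $\int_{0}^{1}x^{s}(1-x)^{n-s}\,dx=\frac{s!\,(n-s)!}{(n+1)!}$ by induction on $s$: a single integration by parts lowers the exponent of $x$ by one while raising that of $(1-x)$ by one, and the boundary terms vanish, giving the recursion that produces the stated factorial ratio (the base case $s=0$ being $\int_{0}^{1}(1-x)^{n}\,dx=\frac{1}{n+1}$).

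Finally, one simplifies $\frac{s!\,(n-s)!}{(n+1)!}=\frac{1}{n+1}\cdot\frac{s!\,(n-s)!}{n!}=\frac{1}{(n+1)\binom{n}{s}}$, using $\binom{n}{s}=\frac{n!}{s!\,(n-s)!}$, which is exactly the claimed right-hand side of \eqref{eqn:Stran}. There is no genuine obstacle in this lemma; the only point deserving a word of care is the convergence of the Riemann sum, which is immediate from the continuity of $f$ on the compact interval $[0,1]$, and the purely arithmetic passage from the Beta value to the binomial coefficient.
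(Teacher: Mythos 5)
Your proof is correct and follows essentially the same route as the paper: both recognize the sum as a Riemann sum converging to the Beta integral $\int_{0}^{1}x^{s}(1-x)^{n-s}\,dx$ (the paper writes the equivalent integrand $x^{n-s}(1-x)^{s}$) and evaluate it as $\frac{s!\,(n-s)!}{(n+1)!}=\frac{1}{\binom{n}{s}(n+1)}$. The only differences are cosmetic: the paper invokes Darboux sums and dismisses the integral's evaluation as ``a brief calculation,'' whereas you spell out the integration-by-parts induction and the harmless omission of the endpoint term.
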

\begin{proof}
Consider the function $f:[0,1]\to \mathbbm{R}, x\to x^{n-s}(1-x)^{s}$, it is immediate to see that the sequence in (\ref{eqn:Stran}) is the upper Darboux sum of $f$ with respect to the partition $0<\frac{1}{N}<\dots<\frac{j}{N}<\dots< 1$. Thus
$$
\lim_{N\to \infty}\frac{1}{N}\sum_{j=0}^{N-1}\Big(\frac{j}{N}\Big)^{s}\Big(\frac{N-j}{N}\Big)^{n-s}=\int_{0}^{1}x^{n-s}(1-x)^{s}dx.
$$
A brief calculation shows that $\int_{0}^{1}x^{n-s}(1-x)^{s}dx=\frac{1}{\binom{n}{s}(n+1)}$.
\end{proof}
The contraction property showed above implies an uniform convergence on some compact sets.
\begin{prop}
\label{prop:UC}
Let $\{\psi_{k}\}_{k\in\mathbbm{N}}\subset \mathcal{M}^{+}$ be a sequence monotonically converging to $\psi\in \mathcal{M}$ almost everywhere. Then for any $\psi'\in\mathcal{M}$ such that $\psi'\succcurlyeq \psi_{k}$ for any $k\geq k_{0}$ big enough and for any compact set $\tilde{K}\subset \mathcal{E}^{1}(X,\omega,\psi')$ with respect the strong topology on $\mathcal{E}^{1}(X,\omega,\psi')$, the sets $K:=P_{\omega}[\psi](\tilde{K})\subset \big(\mathcal{E}^{1}(X,\omega,\psi),d\big)$, $K_{k}:=P_{\omega}[\psi_{k}](\tilde{K})\subset \big(\mathcal{E}^{1}(X,\omega,\psi_{k}),d\big)$ are compact in their respective strong topologies for any $k\geq k_{0}$, and
$$
d\big(P_{\omega}[\psi_{k}](\varphi_{1}),P_{\omega}[\psi_{k}](\varphi_{2})\big)\to d\big(P_{\omega}[\psi](\varphi_{1}),P_{\omega}[\psi](\varphi_{2})\big)
$$
uniformly on $\tilde{K}\times\tilde{K}$, i.e. varying $(\varphi_{1},\varphi_{2})\in \tilde{K}\times \tilde{K}$.
\end{prop}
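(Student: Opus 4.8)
The plan is to treat the two assertions in turn. The compactness of $K$ and $K_k$ is the easy part. Since $\psi_k$ converges to $\psi$ monotonically with every term $\preccurlyeq\psi'$, and the orders $\leq,\preccurlyeq$ agree on $\mathcal{M}$, one checks $\psi\preccurlyeq\psi'$ and $\psi_k\preccurlyeq\psi'$ for $k\geq k_0$; hence by Proposition \ref{prop:Dist} the maps $P_\omega[\psi](\cdot)$ and $P_\omega[\psi_k](\cdot)$ are $1$-Lipschitz from $\big(\mathcal{E}^1(X,\omega,\psi'),d\big)$ into $\big(\mathcal{E}^1(X,\omega,\psi),d\big)$ and $\big(\mathcal{E}^1(X,\omega,\psi_k),d\big)$. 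Being continuous images of the compact set $\tilde K$, the sets $K=P_\omega[\psi](\tilde K)$ and $K_k=P_\omega[\psi_k](\tilde K)$ are compact. For the convergence, set $f_k(\varphi_1,\varphi_2):=d\big(P_\omega[\psi_k](\varphi_1),P_\omega[\psi_k](\varphi_2)\big)$ and $f(\varphi_1,\varphi_2):=d\big(P_\omega[\psi](\varphi_1),P_\omega[\psi](\varphi_2)\big)$ on $\tilde K\times\tilde K$. Combining the triangle inequality for $d$ with the contraction property of Proposition \ref{prop:Dist} gives $|f_k(\varphi_1,\varphi_2)-f_k(\varphi_1',\varphi_2')|\leq d(\varphi_1,\varphi_1')+d(\varphi_2,\varphi_2')$, and the same bound for $f$; thus $\{f_k\}_k$ is uniformly Lipschitz, hence equicontinuous, on the compact metric space $\tilde K\times\tilde K$. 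By the standard fact that an equicontinuous family on a compact space converges uniformly as soon as it converges pointwise, it suffices to prove $f_k\to f$ pointwise.

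For pointwise convergence I would first reduce to pairs of minimal singularities. Given $\varphi_1,\varphi_2\in\tilde K$, let $\varphi_i^{(l)}:=\max(\varphi_i,\psi'-l)$ be the $\psi'$-relative canonical approximants; these have $\psi'$-relative minimal singularities, so $\varphi_1^{(l)}-\varphi_2^{(l)}$ is globally bounded, while $\varphi_i^{(l)}\searrow\varphi_i$ with $d(\varphi_i^{(l)},\varphi_i)\to 0$ as $l\to\infty$ by continuity of $d$ along decreasing sequences. Using the uniform Lipschitz bound for $f_k$ and $f$, an $\epsilon/3$-argument reduces $f_k(\varphi_1,\varphi_2)\to f(\varphi_1,\varphi_2)$ to $f_k(\varphi_1^{(l)},\varphi_2^{(l)})\to f(\varphi_1^{(l)},\varphi_2^{(l)})$ for each fixed $l$. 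For the latter, by Lemma \ref{lem:Pro}$(iii)$ the potentials $P_\omega[\psi_k](\varphi_i^{(l)})$ have $\psi_k$-relative minimal singularities and $P_\omega[\psi_k](\varphi_1^{(l)})-P_\omega[\psi_k](\varphi_2^{(l)})$ stays uniformly bounded in $k$; so once one knows that $P_\omega[\psi_k](\varphi_i^{(l)})$ converges monotonically in capacity to $P_\omega[\psi](\varphi_i^{(l)})$, Lemma \ref{lem:Key} yields exactly the desired convergence (the uniform boundedness being precisely what the increasing case of that Lemma requires, which is why the reduction to minimal singularities is made).

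The crux, which I expect to be the main obstacle, is therefore the identification of the monotone limit: for every $\varphi\in\mathcal{E}^1(X,\omega,\psi')$ one must show $\lim_k P_\omega[\psi_k](\varphi)=P_\omega[\psi](\varphi)$. Monotonicity of $\psi\mapsto P_\omega[\psi](\varphi)$ makes $P_\omega[\psi_k](\varphi)$ monotone in $k$ with the same sense as $\psi_k$, and its limit $w$ satisfies $w\leq\varphi$ and lies between $P_\omega[\psi](\varphi)$ and the $P_\omega[\psi_k](\varphi)$. I would first compute $V_w=V_\psi$: since $P_\omega[\psi_k](\varphi)\in\mathcal{E}^1(X,\omega,\psi_k)$ has mass $V_{\psi_k}\to V_\psi$ (Lemma \ref{lem:Volumes1}), this follows from monotonicity of the total mass in the decreasing case and from its continuity along increasing sequences (Theorem \ref{thm:conv}). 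In the decreasing case I would then show $P_\omega[w]=\psi$, using that $P_\omega[\cdot]$ is order preserving, that $P_\omega[P_\omega[\psi_k](\varphi)]=\psi_k$ (Theorem \ref{thm:samesing}), and that $w\geq P_\omega[\psi](\varphi)$; this forces $w\in\mathcal{E}(X,\omega,\psi)$ with $w\preccurlyeq\psi$, and the maximality property defining $P_\omega[\psi](\varphi)$ then gives $w=P_\omega[\psi](\varphi)$. In the increasing case, writing $u:=P_\omega[\psi](\varphi)$ (which is less singular than each $\psi_k$) and $u_k:=P_\omega[\psi_k](u)=P_\omega[\psi_k](\varphi)$, Theorem \ref{thm:3.8} gives $MA_\omega(u_k)\leq\mathbbm{1}_{\{u_k=u\}}MA_\omega(u)\leq\mathbbm{1}_{\{w=u\}}MA_\omega(u)$; passing to the weak limit (legitimate because $u_k\nearrow w$ with $V_{u_k}\to V_w$, so Theorem \ref{thm:conv} applies) yields $MA_\omega(w)\leq\mathbbm{1}_{\{w=u\}}MA_\omega(u)$, whence $u\leq w$ holds $MA_\omega(w)$-almost everywhere and the domination principle (Proposition $3.11$ in \cite{DDNL17b}) upgrades this to $w=u$. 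Once this identification is in hand, the equicontinuity argument of the first two paragraphs closes the proof.
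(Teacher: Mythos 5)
Your proposal is correct, and its skeleton coincides with the paper's: compactness of $K$ and $K_{k}$ as continuous images of $\tilde{K}$ under the $1$-Lipschitz projections of Proposition \ref{prop:Dist}, the auxiliary functions $f_{k},f$ on $\tilde{K}\times\tilde{K}$, pointwise convergence via Lemma \ref{lem:Key}, and an upgrade to uniform convergence on the compact set $\tilde{K}\times\tilde{K}$ (you use equi-Lipschitz continuity plus pointwise convergence; the paper uses the monotonicity of $k\mapsto f_{k}$ together with Dini's theorem --- the two are interchangeable here). The genuine difference is that the paper's proof is a few lines long and simply asserts, citing Lemma \ref{lem:Key}, that $f_{k}\to f$ pointwise on a dense subset, whereas you isolate and prove exactly what that citation leaves implicit: (a) Lemma \ref{lem:Key}'s increasing case needs $u_{1,k}-u_{2,k}$ uniformly bounded, which forces your reduction to the canonical approximants $\varphi_{i}^{(l)}=\max(\varphi_{i},\psi'-l)$ with $\psi'$-relative minimal singularities, controlled uniformly in $k$ by Lemma \ref{lem:Pro}$(iii)$; and (b) the identification $\lim_{k}P_{\omega}[\psi_{k}](\varphi)=P_{\omega}[\psi](\varphi)$ of the monotone limit, which is not stated anywhere in the paper and is indeed the crux. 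Your increasing-case identification (Theorem \ref{thm:3.8}, mass convergence via Lemma \ref{lem:Volumes1}, Theorem \ref{thm:conv}, and the domination principle) is precisely the mechanism the paper deploys inside the proofs of Lemma \ref{lem:Volumes1} and Lemma \ref{lem:Key}, so your write-up is in effect a more complete version of the intended argument.

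Two small slips, both repairable within your own framework. In the decreasing case you route through Theorem \ref{thm:samesing} (via $P_{\omega}[P_{\omega}[\psi_{k}](\varphi)]=\psi_{k}$), which requires $V_{\psi_{k}}>0$, while the statement allows $\psi_{k}\in\mathcal{M}$; the detour is unnecessary: from $P_{\omega}[\psi_{k}](\varphi)\leq \psi_{k}+\sup_{X}\varphi$ one gets $w\leq\psi+\sup_{X}\varphi$ and $w\leq\varphi$, hence $w\leq P_{\omega}(\psi+C,\varphi)\leq P_{\omega}[\psi](\varphi)$ for $C$ large, and equality follows directly from the definition of the envelope. In the increasing case, your parenthetical claim that $u=P_{\omega}[\psi](\varphi)$ is less singular than each $\psi_{k}$ --- needed to apply Theorem \ref{thm:3.8} --- fails for a general $\varphi\in\mathcal{E}^{1}(X,\omega,\psi')$, since full mass does not imply relative minimal singularities; it is true exactly for the approximants $\varphi_{i}^{(l)}$ you already reduced to, because then $u\geq\psi-C\geq\psi_{k}-C$. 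So the "crux" should be stated for minimal-singularity potentials only, which is all your reduction step actually requires.
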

\begin{proof}
It follows from Lemma \ref{lem:Pro} and Proposition \ref{prop:Dist} that $P_{\omega}[\psi_{k}](\tilde{K})$ is compact in $\big(\mathcal{E}^{1}(X,\omega,\psi_{k}),d\big)$ for any $k\in\mathbbm{N}$, and similarly for $K$.\\
Next, we define $f_{k}:\tilde{K}\times \tilde{K}\to \mathbbm{R}_{\geq 0}$ for $k\in\mathbbm{N}$ and $f:\tilde{K}\times \tilde{K}\to \mathbbm{R}_{\geq 0}$ respectively as
\begin{gather*}
f_{k}(\varphi_{1},\varphi_{2}):=d\big(P_{\omega}[\psi_{k}](\varphi_{1}),P_{\omega}[\psi_{k}](\varphi_{2})\big)\\
f(\varphi_{1},\varphi_{2}):=d\big(P_{\omega}[\psi](\varphi_{1}),P_{\omega}[\psi](\varphi_{2})\big).
\end{gather*}
We observe that $f_{k},f$ are Lipschitz continuous with respect the strong topology on $\tilde{K}\times \tilde{K}$ (Proposition \ref{prop:Dist}). Moreover by Lemma \ref{lem:NewLemma} below $f_{k}\to f$ pointwise on a dense subset of $\tilde{K}\times \tilde{K}$ and $\{f_{k}\}_{k\in\mathbbm{N}}$ is a monotone sequence. Hence Dini's Theorem implies that $f_{k}\to f$ uniformly on $\tilde{K}\times \tilde{K}$.
\end{proof}
\begin{lem}
\label{lem:NewLemma}
Let $\{\psi_{k}\}_{k\in\mathbbm{N}}\subset \mathcal{M}^{+}$ totally ordered such that $\psi_{k}\to \psi\in\mathcal{M}$ monotonically, and let $\{u_{1,k},u_{2,k}\}_{k\in\mathbbm{N}}$ be two sequences such that $u_{1,k},u_{2,k}\in\mathcal{E}^{1}(X,\omega,\psi_{k})$, and such that $ u_{1,k},u_{2,k}\geq \psi_{k}-C$ uniformly in $k\in\mathbbm{N}$. If either $u_{1,k}\searrow u_{1},u_{2,k}\searrow u_{2}$ a.e. or $u_{1,k}\nearrow u_{1},u_{2,k}\nearrow u_{2}$ a.e., then
$$
d(u_{1,k},u_{2,k})\to d(u_{1},u_{2}).
$$
\end{lem}
\begin{proof}
As immediate consequence of Proposition \ref{lem:Key} and of the definition of $d$, to prove that $d(u_{1,k},u_{2,k})\to d(u_{1},u_{2})$ we only need to show that $P_{\omega}(u_{1,k},u_{2,k})\to P_{\omega}(u_{1},u_{2})$ almost everywhere since clearly $u_{i,k}-P_{\omega}(u_{1,k},u_{2,k})$ is uniformly bounded and $P_{\omega}(u_{1,k},u_{2,k})\geq \psi_{k}-C$. From $P_{\omega}(u_{1,k},u_{1,k})\leq u_{1,k},u_{2,k}$ we immediately have $\big(\lim_{k\to \infty}P_{\omega}(u_{1,k},u_{2,k})\big)^{*}\leq P_{\omega}(u_{1},u_{2})$. Therefore if the convergence is decreasing then $P_{\omega}(u_{1},u_{2})\leq P_{\omega}(u_{1,k},u_{2,k})$ and we get the convergence of the $d$ distances. If instead the convergence is increasing then, setting $\phi:=\big(\lim_{k\to \infty} P_{\omega}(u_{1,k},u_{2,k})\big)^{*}$, we observe that $\phi\in \mathcal{E}(X,\omega,\psi)$ and that $MA_{\omega}\big(P_{\omega}(u_{1,k},u_{2,k})\big)\to MA_{\omega}(\phi)$ weakly as a consequence of Theorem \ref{thm:conv} and Lemma \ref{lem:Volumes1}. Moreover since by Lemma \ref{lem:PMass}
\begin{multline*}
MA_{\omega}\big(P_{\omega}(u_{1,k},u_{2,k})\big)\leq \mathbbm{1}_{\{\phi\geq u_{1,k}\}}MA_{\omega}(u_{1,k})+\mathbbm{1}_{\{\phi\geq u_{2,k}\}}MA_{\omega}(u_{2,k})\leq\\
\leq \mathbbm{1}_{\{\phi\geq u_{1,j}\}}MA_{\omega}(u_{1,k})+\mathbbm{1}_{\{\phi\geq u_{2,j}\}}MA_{\omega}(u_{2,k})
\end{multline*}
for any $j\leq k$, and $MA_{\omega}(u_{i,k})\to MA_{\omega}(u_{i})$ weakly for $i=1,2$, we get
$$
MA_{\omega}(\phi)\leq \mathbbm{1}_{\{\phi\geq u_{1,j}\}}MA_{\omega}(u_{1})+\mathbbm{1}_{\{\phi\geq u_{2,j}\}}MA_{\omega}(u_{2}).
$$
Therefore letting $j\to \infty$ we obtain
\begin{gather*}
0\leq \int_{X}\big(P_{\omega}(u_{1},u_{2})-\phi\big)MA_{\omega}(\phi)\leq \int_{\{\phi\geq u_{1}\}}\big(P_{\omega}(u_{1},u_{2})-u_{1}\big)MA_{\omega}(u_{1})+\\
+\int_{\{\phi\geq u_{2}\}}\big(P_{\omega}(u_{1},u_{2})-u_{2}\big)MA_{\omega}(u_{2})\leq 0,
\end{gather*}
which by the domination principle of Proposition $3.11.$ in \cite{DDNL17b} implies $P_{\omega}(u_{1},u_{2})\leq \phi$. Hence $P_{\omega}(u_{1},u_{2})=\phi$ which as said above concludes the proof.
\end{proof}
\subsection{The metric space $\big(\bigsqcup_{\psi\in \mathcal{A}}\mathcal{P}(X,\omega,\psi),d_{\mathcal{A}}\big)$.}
\begin{defn}
Let $\psi\in\mathcal{M}$. We introduce the set
$$
\mathcal{P}_{\mathcal{H}}(X,\omega,\psi):=\{P_{\omega}[\psi](\varphi) \, : \, \varphi\in \mathcal{H}\}
$$
where $\mathcal{H}:=\{\varphi\in PSH(X,\omega)\, : \, \omega +dd^{c}\varphi$ Kähler form $\}$.
\end{defn}
Observe that by Lemma \ref{lem:Pro} any $u\in \mathcal{P}_{\mathcal{H}}(X,\omega,\psi)$ has $\psi$-relative minimal singularities. This smaller set is dense in $\big(\mathcal{E}^{1}(X,\omega,\psi),d\big)$ as the next result shows:
\begin{lem}
\label{lem:ProDense}
Let $\psi\in \mathcal{M}$. Then $\mathcal{P}_{\mathcal{H}}(X,\omega,\psi)$ is dense in $\mathcal{E}^{1}(X,\omega,\psi)$ with respect to the strong topology.
\end{lem}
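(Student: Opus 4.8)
The plan is to approximate an arbitrary $u\in\mathcal{E}^{1}(X,\omega,\psi)$ in two stages: first by the projections $P_{\omega}[\psi](g)$ of \emph{bounded} $\omega$-psh potentials $g$, and then to replace those bounded potentials by genuine Kähler potentials, using the density of $\mathcal{H}$ in the absolute setting together with the contraction property of $P_{\omega}[\psi]$. If $\psi\in\mathcal{M}\setminus\mathcal{M}^{+}$ the statement is vacuous since $d\equiv 0$ (Remark \ref{rem:ZeroMass}), so I assume $\psi\in\mathcal{M}^{+}$. The key preliminary observation is that $P_{\omega}[\psi](u)=u$ for every $u\in\mathcal{E}^{1}(X,\omega,\psi)$: indeed $u\preccurlyeq\psi$ gives $u\leq\psi+C_{0}$, hence $\min(\psi+C,u)=u$ and $P_{\omega}(\psi+C,u)=u$ for all $C\geq C_{0}$, so the defining limit is $u$.

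First I would fix $u\in\mathcal{E}^{1}(X,\omega,\psi)$ and set $g_{k}:=\max(u,-k)$. These are bounded $\omega$-psh functions decreasing to $u$, so $g_{k}\in\mathcal{E}^{1}(X,\omega,0)=\mathcal{E}^{1}(X,\omega)$. Put $w_{k}:=P_{\omega}[\psi](g_{k})$; by Lemma \ref{lem:Pro} each $w_{k}$ has $\psi$-relative minimal singularities, so $w_{k}\in\mathcal{E}^{1}(X,\omega,\psi)$. Monotonicity of $P_{\omega}[\psi]$ gives that $w_{k}$ is decreasing, while $u=P_{\omega}[\psi](u)\leq w_{k}\leq g_{k}$ forces $w_{k}\searrow u$. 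Continuity of $d$ along decreasing sequences (Proposition \ref{prop:Decr}) then yields $d(w_{k},u)\to 0$.

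For the second stage I would use that, by Darvas \cite{Dar14}, $\mathcal{H}$ is dense in the metric space $\big(\mathcal{E}^{1}(X,\omega),d\big)$ (the latter being the completion of $(\mathcal{H},d)$). Hence for each $k$ there are $\varphi_{k,l}\in\mathcal{H}$ with $d(\varphi_{k,l},g_{k})\to 0$ as $l\to\infty$, the distance being the absolute one on $\mathcal{E}^{1}(X,\omega)$. The contraction Proposition \ref{prop:Dist}, applied with ambient envelope $0$ and target $\psi$ (note $\psi\preccurlyeq 0$), shows that $P_{\omega}[\psi]\colon\mathcal{E}^{1}(X,\omega)\to\mathcal{E}^{1}(X,\omega,\psi)$ is $1$-Lipschitz, so $d\big(P_{\omega}[\psi](\varphi_{k,l}),w_{k}\big)\leq d(\varphi_{k,l},g_{k})\to 0$. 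A diagonal choice $l(k)$ with $d\big(P_{\omega}[\psi](\varphi_{k,l(k)}),w_{k}\big)<1/k$ then produces elements $P_{\omega}[\psi](\varphi_{k,l(k)})\in\mathcal{P}_{\mathcal{H}}(X,\omega,\psi)$ with $d\big(P_{\omega}[\psi](\varphi_{k,l(k)}),u\big)\leq 1/k+d(w_{k},u)\to 0$, which is exactly the claimed density.

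The only genuinely delicate point is to make the three limits compatible, and this is precisely what the identity $P_{\omega}[\psi](u)=u$ and the $1$-Lipschitz property of $P_{\omega}[\psi]$ are for: they let me transport the absolute approximation of $g_{k}$ by Kähler potentials into the $\psi$-relative space without losing control of $d$. An alternative to invoking \cite{Dar14} would be to regularize $u$ directly: Demailly--Błocki--Kołodziej gives smooth $\omega$-psh $\tilde{\varphi}_{j}\searrow u$, whose projections satisfy $P_{\omega}[\psi](\tilde{\varphi}_{j})\searrow u$ as above, after which one passes to $(1-\epsilon)\tilde{\varphi}_{j}\in\mathcal{H}$ and controls $||P_{\omega}[\psi]((1-\epsilon)\tilde{\varphi}_{j})-P_{\omega}[\psi](\tilde{\varphi}_{j})||_{L^{\infty}}\leq \epsilon\,||\tilde{\varphi}_{j}||_{L^{\infty}}$ via Lemma \ref{lem:Pro}.(iii), turning uniform smallness into $d$-smallness through Proposition \ref{prop:Ineq}. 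I expect the bookkeeping of this diagonal argument to be the main, though routine, obstacle.
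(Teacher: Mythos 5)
Your proposal is correct and follows essentially the same route as the paper's proof: the canonical truncations $\max(u,-k)$ projected by $P_{\omega}[\psi]$ give a decreasing sequence converging to $u$ (using $u=P_{\omega}[\psi](u)\leq P_{\omega}[\psi](\max(u,-k))\leq \max(u,-k)$ and continuity of $d$ along decreasing sequences), after which Darvas' density of $\mathcal{H}$ in $\big(\mathcal{E}^{1}(X,\omega),d\big)$ combined with the $1$-Lipschitz property of $P_{\omega}[\psi]$ from Proposition \ref{prop:Dist} transports the approximation into $\mathcal{P}_{\mathcal{H}}(X,\omega,\psi)$, exactly as in the paper (your diagonal choice $l(k)$ is the paper's direct choice $d(\varphi_{j},\max(u,-j))\leq 1/j$). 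Your explicit verification that $P_{\omega}[\psi](u)=u$ for $u\in\mathcal{E}^{1}(X,\omega,\psi)$ is a detail the paper uses silently, and it is correct.
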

\begin{proof}
We can assume $\psi\in\mathcal{M}^{+}$, otherwise it is trivial. Let $u\in \mathcal{E}^{1}(X,\omega,\psi)$.\\
We first observe that $v_{j}:=P_{\omega}[\psi](\max(u,-j))$ belongs to $\mathcal{E}^{1}(X,\omega,\psi)$ and it has $\psi$-relative minimal singularities by Lemma \ref{lem:Pro}. Moreover since
$$
u=P_{\omega}[\psi](u)\leq v_{j}\leq \max(u,-j)
$$
we also get that $v_{j}\searrow u$. Therefore $d(u,v_{j})\to 0$ for $j\to \infty$ since $d$ is continuous along decreasing sequences. Next, by density of $\mathcal{H}$ into $\mathcal{E}^{1}(X,\omega):=\mathcal{E}^{1}(X,\omega,0)$, for any $j\in \mathbbm{N}$ there exists $\varphi_{j}\in\mathcal{H}$ such that $d(\varphi_{j},\max(u,-j))\leq 1/j$. Therefore, letting $w_{j}:=P_{\omega}[\psi](\varphi_{j})\in \mathcal{P}_{\mathcal{H}}(X,\omega,\psi)$, by Proposition \ref{prop:Dist} it follows that
$$
d(w_{j},u)\leq d(w_{j},v_{j})+d(v_{j},u)\leq d(u,v_{j})+\frac{1}{j}\to 0
$$
as $j\to\infty$, which concludes the proof.
\end{proof}
\begin{rem}
\emph{By the main Theorem in \cite{Mc18}, if the singularities of $\psi$ are} analytic, \emph{i.e. $\psi=P_{\omega}[u]$ for $u$ with analytic singularities of type $\mathfrak{a}^{c}$ for an analytic coherent ideal sheaf $\mathfrak{a}\subset \mathcal{O}_{X}$, $c\in\mathbbm{R}_{>0}$, then any function $v\in \mathcal{P}_{\mathcal{H}}(X,\omega,\psi)$ is $\mathcal{C}^{1,1}_{loc}\big(X\setminus V(\mathfrak{a})\big)$.}
\end{rem}
We need now to recall the definition of the \emph{entropy}.
\begin{defn}\emph{[Definition 2.9., \cite{BBEGZ16}]}
Let $\mu,\nu$ two probability measures on $X$. The \emph{relative entropy} $H_{\mu}(\nu)\in [0,+\infty]$ of $\nu$ with respect to $\mu$ is defined as follows. If $\nu$ is absolutely continuous with respect to $\mu$ and $f:=\frac{d\nu}{d\mu}$ satisfies $f\log f\in L^{1}(\mu)$ then
$$
H_{\mu}(\nu):=\int_{X}f\log f d\mu=\int_{X}\log \Big(\frac{d\nu}{d\mu}\Big)d\nu.
$$
Otherwise $H_{\mu}(\nu):=+\infty$.
\end{defn}
The relative entropy provides compact sets in $\mathcal{E}^{1}(X,\omega)$ endowed with the strong topology (Definition \ref{defn:Strong}).
\begin{thm}\emph{[Theorem $2.17.$, \cite{BBEGZ16}]}
\label{thm:Comp}
Let $C$ be a positive constant. Then the set
$$
\mathcal{K}_{C}:=\Big\{\varphi\in \mathcal{E}^{1}(X,\omega)\, :\, \max \Big(|\sup_{X}\varphi|, H_{MA_{\omega}(0)/V_{0}}\big(MA_{\omega}(\varphi)/V_{0}\big)\Big)\leq C\Big\}
$$
is compact in $\mathcal{E}^{1}(X,\omega)$ with respect to the strong topology.
\end{thm}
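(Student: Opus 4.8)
The plan is to prove that $\mathcal{K}_C$ is \emph{sequentially} compact in the strong topology; since $\big(\mathcal{E}^1(X,\omega),d\big)$ is a metric space this is equivalent to compactness. We are in the absolute case $\psi=0$, i.e. $\mathcal{E}^1(X,\omega)=\mathcal{E}^1(X,\omega,0)$, so all the estimates recalled above apply with $\psi=0$. Given $\{\varphi_j\}_{j\in\mathbb{N}}\subset\mathcal{K}_C$, write $\mu_0:=MA_\omega(0)/V_0=\omega^n/V_0$ and $\mu_j:=MA_\omega(\varphi_j)/V_0$, so $|\sup_X\varphi_j|\le C$ and $H_{\mu_0}(\mu_j)\le C$. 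First I would extract the candidate limit both at the level of potentials and of measures. The bound $|\sup_X\varphi_j|\le C$ makes $\{\varphi_j\}$ relatively compact in the $L^1$-topology, so after passing to a subsequence $\varphi_j\to\varphi$ in $L^1$ with $\varphi\in PSH(X,\omega)$ and $|\sup_X\varphi|\le C$ by upper semicontinuity. Since $t\mapsto t\log t$ is superlinear, the entropy bound $H_{\mu_0}(\mu_j)\le C$ gives uniform integrability of the densities $d\mu_j/d\mu_0$ (de la Vallée Poussin), hence weak sequential compactness by Dunford--Pettis; after a further subsequence $\mu_j\to\mu$ weakly, and $H_{\mu_0}(\mu)\le C$ by lower semicontinuity of the relative entropy under weak convergence.

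Next I would convert the entropy bound into a \emph{uniform} energy bound and identify the limit. Normalizing $\varphi_j\le 0$, the Gibbs variational inequality gives, for $s>0$, $\int_X s|\varphi_j|\,d\mu_j\le H_{\mu_0}(\mu_j)+\log\int_X e^{s|\varphi_j|}\,d\mu_0$, while the uniform integrability $\sup\{\int_X e^{-s\varphi}\,\omega^n:\varphi\in PSH(X,\omega),\ \sup_X\varphi=0\}<\infty$ for some small $s>0$ (Skoda's theorem) bounds the last term uniformly. Hence $\int_X|\varphi_j|\,MA_\omega(\varphi_j)=V_0\int_X|\varphi_j|\,d\mu_j$ is uniformly bounded, and by Proposition \ref{prop:4.10} this yields $E(\varphi_j)\ge\int_X\varphi_j\,MA_\omega(\varphi_j)\ge -C'$ uniformly, so the sequence sits in a fixed sublevel of $-E$. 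Passing to the limit through Theorem \ref{thm:conv}, applied along suitable monotone regularizations so as to obtain convergence in capacity, one identifies $MA_\omega(\varphi)=V_0\mu$ and $\varphi\in\mathcal{E}^1(X,\omega)$; the preserved bounds on $\sup_X\varphi$ and on $H_{\mu_0}(\mu)$ then place $\varphi\in\mathcal{K}_C$.

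Finally I would upgrade the weak convergence to $d$-convergence. Using the characterization that, for $L^1$-convergent sequences in $\mathcal{E}^1(X,\omega)$, one has $d(\varphi_j,\varphi)\to 0$ if and only if $E(\varphi_j)\to E(\varphi)$, it remains to prove convergence of the energies. Upper semicontinuity of $E$ along $L^1$-convergence yields $\limsup_j E(\varphi_j)\le E(\varphi)$ immediately. The reverse inequality $\liminf_j E(\varphi_j)\ge E(\varphi)$ is the heart of the matter and is exactly where the entropy bound is indispensable: under weak convergence alone the energy may drop in the limit, so one must invoke a stability estimate for the complex Monge--Ampère equation in the energy class, with the uniform integrability furnished by the entropy bound supplying the compactness needed to run it. I expect this strong upgrade --- turning the uniform entropy bound into the missing lower semicontinuity of $E$ along the sequence --- to be the main obstacle, whereas the weak compactness of the first step and the identification of the limit in $\mathcal{E}^1(X,\omega)$ are comparatively routine applications of $L^1$-compactness, de la Vallée Poussin / Dunford--Pettis, and Theorem \ref{thm:conv}.
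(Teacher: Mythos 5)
Your opening moves are sound and in fact mirror the start of the proof in [BBEGZ16] (which is where the paper takes this statement from, without reproving it): the $L^{1}$-compactness from the $\sup$ bound, Dunford--Pettis/de la Vall\'ee Poussin for the densities, lower semicontinuity of the entropy, and the Gibbs-plus-Skoda argument giving the uniform bound on $\int_{X}|\varphi_{j}|\,MA_{\omega}(\varphi_{j})$, hence on $E(\varphi_{j})$, are all correct. The problem is that the proof stops exactly where the theorem lives. Two concrete points. First, in your second paragraph the identification $MA_{\omega}(\varphi)=V_{0}\mu$ via Theorem \ref{thm:conv} ``applied along suitable monotone regularizations'' does not work as stated: Theorem \ref{thm:conv} requires convergence \emph{in capacity} of the potentials, which $L^{1}$-convergence does not provide. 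If you regularize, say by $\tilde{\varphi}_{j}:=\big(\sup_{k\geq j}\varphi_{k}\big)^{*}\searrow\varphi$, you do get capacity convergence, but then the theorem controls $MA_{\omega}(\tilde{\varphi}_{j})$, not $MA_{\omega}(\varphi_{j})$, and there is no a priori comparison between the two; indeed, the weak convergence $MA_{\omega}(\varphi_{j})\to MA_{\omega}(\varphi)$ is essentially equivalent to the strong convergence you are trying to establish, so this step is circular. Second, your last paragraph openly delegates the inequality $\liminf_{j}E(\varphi_{j})\geq E(\varphi)$ to an unnamed ``stability estimate for the complex Monge--Amp\`ere equation.'' That inequality \emph{is} the theorem: without the entropy bound it fails (energy can drop along weak limits), and no result quoted in this paper or invoked in your outline supplies it. An attempt that labels the decisive step ``the main obstacle'' and leaves it unproved has a genuine gap, not a routine omission.

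For the record, the way [BBEGZ16] closes this gap is quantitative, not a black-box stability theorem: the entropy bound places the densities $f_{j}=d\mu_{j}/d\mu_{0}$ in a bounded subset of the Orlicz space $L\log L(\mu_{0})$, while the uniform Skoda inequality places the normalized potentials in a bounded subset of the dual exponential Orlicz class; H\"older--Young duality together with the $L^{1}$-convergence $\varphi_{j}\to\varphi$ then yields $\int_{X}(\varphi_{j}-\varphi)\,d\mu_{k}\to 0$ \emph{uniformly in} $k$, and feeding this into energy estimates of the type of Proposition \ref{prop:4.10} gives $E(\varphi_{j})\to E(\varphi)$, hence $d$-convergence (using, as you correctly note, that strong convergence is equivalent to weak convergence plus convergence of energies), and only \emph{a posteriori} the identification $MA_{\omega}(\varphi)=V_{0}\mu$ and $\varphi\in\mathcal{K}_{C}$. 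If you replace your Theorem \ref{thm:conv} detour and the appeal to an unspecified stability result by this Orlicz-duality argument, the proof closes.
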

\begin{defn}
Given $\psi\in \mathcal{A}$, we define for any $C\geq 0$
\begin{multline*}
\mathcal{P}_{C}(X,\omega,\psi):=P_{\omega}[\psi](K_{C})=\Big\{P_{\omega}[\psi](\varphi)\in \mathcal{E}^{1}(X,\omega,\psi) \, :\, \max\big(|\sup_{X}\varphi|, H_{MA_{\omega}(0)}(MA_{\omega}(\varphi))\big)\leq C\Big\},
\end{multline*}
and
$$
\mathcal{P}(X,\omega,\psi):=\bigcup_{C\in \mathbbm{R}_{\geq 0}}\mathcal{P}_{C}(X,\omega,\psi).
$$
\end{defn}
As an immediate consequence of Theorem \ref{thm:Comp} and Proposition \ref{prop:UC} $\mathcal{P}_{C}(X,\omega,\psi)$ is compact in $(\mathcal{E}^{1}(X,\omega,\psi),d)$ and $P_{C_{1}}(X,\omega,\psi)\subset \mathcal{P}_{C_{2}}(X,\omega,\psi)$ if $C_{1}\leq C_{2}$. Moreover since $ H_{MA_{\omega}(0)}(MA_{\omega}(\varphi))<\infty$ for any $\varphi\in \mathcal{H}$, $\mathcal{P}_{\mathcal{H}}(X,\omega,\psi)=\bigcup_{C\in\mathbbm{R}_{\geq 0}}P_{\omega}[\psi](\mathcal{K}_{C}\cap\mathcal{H})\subset \mathcal{P}(X,\omega,\psi)$. It is also clear that for any $u\in\mathcal{P}(X,\omega,\psi)$ there exists $C\in\mathbbm{R}_{\geq 0}$ minimal such that $u\in\mathcal{P}_{C}(X,\omega,\psi)$. We set $\mathcal{P}(X,\omega):=\mathcal{P}(X,\omega,0)$ and we call $\varphi\in\mathcal{P}(X,\omega)$ a \emph{minimal entropy function} for $u\in\mathcal{P}(X,\omega,\psi)$ if $\varphi\in \mathcal{K}_{C}$ and $P_{\omega}[\psi](\varphi)=u$ for $C$ minimal.
\begin{defn}
\label{defn:Ref}
Let $u\in \mathcal{P}_{C_{1}}(X,\omega,\psi_{1}), v\in \mathcal{P}_{C_{2}}(X,\omega,\psi_{2})$ for $\psi_{1},\psi_{2}\in \mathcal{A}$ such that $\psi_{2}\preccurlyeq \psi_{1}$. Assume also that $C_{1}$ (respectively $C_{2}$) is minimal such that $u\in\mathcal{P}_{C_{1}}(X,\omega,\psi_{1})$ (resp. $v\in\mathcal{P}_{C_{2}}(X,\omega,\psi_{2})$). We define
$$
\tilde{d}_{\mathcal{A}}(u,v):=d(v,P_{\omega}[\psi_{2}](u))+\sup \Big\{d(a,b)-d(P_{\omega}[\psi_{2}](a),P_{\omega}[\psi_{2}](b))\Big\}+V_{u}-V_{v}
$$
where the supremum is over all $a,b\in \mathcal{P}_{\max(C_{1},C_{2})}(X,\omega,\psi_{1})$.
\end{defn}
We observe that $\tilde{d}_{\mathcal{A}}$ takes finite values since the supremum in the definition is actually equal to
$$
\max_{(\varphi_{1},\varphi_{2})\in \mathcal{K}_{\max(C_{1},C_{2})}\times \mathcal{K}_{\max(C_{1},C_{2})}}\Big\{d\big(P_{\omega}[\psi_{1}](\varphi_{1}),P_{\omega}[\psi_{1}](\varphi_{2})\big)-d\big(P_{\omega}[\psi_{2}](\varphi_{1}),P_{\omega}[\psi_{2}](\varphi_{2})\big)\Big\}.
$$
\begin{prop}
\label{prop:Proper}
Let $u\in \mathcal{P}(X,\omega,\psi_{1}), v\in \mathcal{P}(X,\omega,\psi_{2})$ for $\psi_{1},\psi_{2}\in \mathcal{A}$ such that $\psi_{2}\preccurlyeq \psi_{1}$. Then the followings hold:
\begin{itemize}
\item[i)] $\tilde{d}_{\mathcal{A}}(u,v)=\tilde{d}_{\mathcal{A}}(v,u)$;
\item[ii)] $\tilde{d}_{\mathcal{A}}(u,v)\in \mathbbm{R}_{\geq 0}$ and $\tilde{d}_{\mathcal{A}}(u,v)=0$ if and only if $u=v$;
\item[iii)] if $\psi_{1}=\psi_{2}$ then $\tilde{d}_{\mathcal{A}}(u,v)=d(u,v)$;
\item[iv)] $\tilde{d}_{\mathcal{A}}(u,v)\geq d(v,P_{\omega}[\psi_{2}](u))$ and $\tilde{d}_{\mathcal{A}}(u,v)\geq d(u, P_{\omega}[\psi_{1}](\varphi)) $ where $\varphi\in\mathcal{P}(X,\omega)$ is a minimal entropy function for $v$.
\end{itemize}
\end{prop}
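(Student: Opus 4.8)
The plan is to reduce all four assertions to the three nonnegative summands making up $\tilde{d}_{\mathcal{A}}(u,v)$, namely $d(v,P_{\omega}[\psi_{2}](u))$, the supremum term $S:=\sup_{a,b}\{d(a,b)-d(P_{\omega}[\psi_{2}](a),P_{\omega}[\psi_{2}](b))\}$, and $V_{u}-V_{v}$. First I would record that each is $\geq 0$: the first is a genuine $d$-distance; by Proposition \ref{prop:Dist} the projection $P_{\omega}[\psi_{2}](\cdot)$ is $1$-Lipschitz, so every term in $S$ is $\geq 0$ (and $S\geq 0$ already follows by taking $a=b$); and since $u\in\mathcal{E}^{1}(X,\omega,\psi_{1})$, $v\in\mathcal{E}^{1}(X,\omega,\psi_{2})$ force $V_{u}=V_{\psi_{1}}$, $V_{v}=V_{\psi_{2}}$, the monotonicity of the total mass under $\preccurlyeq$ (\cite{WN17}) together with $\psi_{2}\preccurlyeq\psi_{1}$ yields $V_{u}-V_{v}=V_{\psi_{1}}-V_{\psi_{2}}\geq 0$. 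Finiteness of $S$ is exactly the reformulation noted after the definition: writing $a=P_{\omega}[\psi_{1}](\varphi_{1})$, $b=P_{\omega}[\psi_{1}](\varphi_{2})$ and using $P_{\omega}[\psi_{2}](P_{\omega}[\psi_{1}](\varphi))=P_{\omega}[\psi_{2}](\varphi)$ (Lemma \ref{lem:Pro}$.(i)$), $S$ becomes the maximum over the compact set $\mathcal{K}_{\max(C_{1},C_{2})}\times\mathcal{K}_{\max(C_{1},C_{2})}$ (Theorem \ref{thm:Comp}) of a function continuous by Proposition \ref{prop:Dist}. This already gives $\tilde{d}_{\mathcal{A}}(u,v)\in\mathbbm{R}_{\geq 0}$, the first half of $(ii)$.

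For $(iv)$ the inequality $\tilde{d}_{\mathcal{A}}(u,v)\geq d(v,P_{\omega}[\psi_{2}](u))$ is immediate from $S\geq 0$ and $V_{u}-V_{v}\geq 0$. For the second estimate, with $\varphi$ a minimal entropy function for $v$ I would set $a_{0}:=P_{\omega}[\psi_{1}](\varphi)$ and test the supremum at $(a,b)=(u,a_{0})$, both lying in $\mathcal{P}_{\max(C_{1},C_{2})}(X,\omega,\psi_{1})$. Since $P_{\omega}[\psi_{2}](a_{0})=P_{\omega}[\psi_{2}](\varphi)=v$ (Lemma \ref{lem:Pro}$.(i)$ and $v=P_{\omega}[\psi_{2}](\varphi)$), this gives $S\geq d(u,a_{0})-d(P_{\omega}[\psi_{2}](u),v)$; substituting into the definition, the terms $d(v,P_{\omega}[\psi_{2}](u))$ and $d(P_{\omega}[\psi_{2}](u),v)$ cancel by symmetry of $d$ (Lemma \ref{lem:Properties}$.(i)$), leaving $\tilde{d}_{\mathcal{A}}(u,v)\geq d(u,a_{0})+(V_{u}-V_{v})\geq d(u,P_{\omega}[\psi_{1}](\varphi))$. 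Assertion $(iii)$ is the degenerate case $\psi_{1}=\psi_{2}=:\psi$: here $P_{\omega}[\psi](w)=w$ for every $w\in\mathcal{E}^{1}(X,\omega,\psi)$ (cf. the proof of Lemma \ref{lem:ProDense}), so $d(v,P_{\omega}[\psi](u))=d(u,v)$, the contraction defining $S$ becomes an equality forcing $S=0$, and $V_{u}-V_{v}=0$, whence $\tilde{d}_{\mathcal{A}}(u,v)=d(u,v)$.

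The delicate point is the separation half of $(ii)$. Suppose $\tilde{d}_{\mathcal{A}}(u,v)=0$; then all three nonnegative summands vanish, in particular $V_{u}=V_{v}$, i.e. $V_{\psi_{1}}=V_{\psi_{2}}$. The key observation is that this alone forces $\psi_{1}=\psi_{2}$: since $\psi_{2}\preccurlyeq\psi_{1}$ and $V_{\psi_{2}}=V_{\psi_{1}}$, the potential $\psi_{2}$ has $\psi_{1}$-relative full mass, so (using $V_{\psi_{1}}>0$ as $\psi_{1}\in\mathcal{M}^{+}$) Theorem \ref{thm:samesing} gives $P_{\omega}[\psi_{2}]=P_{\omega}[\psi_{1}]$, and as both are model type envelopes $\psi_{2}=\psi_{1}$. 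We are then in the situation of $(iii)$, so $0=\tilde{d}_{\mathcal{A}}(u,v)=d(u,v)$ and Lemma \ref{lem:Properties}$.(v)$ yields $u=v$. Conversely $u=v$ forces $\psi_{1}=\psi_{2}$ by disjointness of the classes $\mathcal{E}(X,\omega,\psi_{1})$, $\mathcal{E}(X,\omega,\psi_{2})$ for distinct model type envelopes of positive mass, and then $(iii)$ gives $\tilde{d}_{\mathcal{A}}(u,v)=d(u,u)=0$.

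Finally $(i)$ follows the same dichotomy: if $\psi_{1}\neq\psi_{2}$ exactly one labelling respects $\psi_{2}\preccurlyeq\psi_{1}$, so symmetry holds by the convention defining $\tilde{d}_{\mathcal{A}}$ on the unordered pair; if $\psi_{1}=\psi_{2}$ both orderings are admissible and $(iii)$ together with the symmetry of $d$ (Lemma \ref{lem:Properties}$.(i)$) gives $\tilde{d}_{\mathcal{A}}(u,v)=d(u,v)=d(v,u)=\tilde{d}_{\mathcal{A}}(v,u)$. I expect the genuine obstacle to be isolating cleanly the step that equality of relative masses already forces the two model type envelopes to coincide, since every other part is bookkeeping with the contraction property of Proposition \ref{prop:Dist} and the tower identity of Lemma \ref{lem:Pro}$.(i)$.
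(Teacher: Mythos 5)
Your proof is correct and follows essentially the same route as the paper's: the same decomposition of $\tilde{d}_{\mathcal{A}}$ into three nonnegative summands (nonnegativity via Proposition \ref{prop:Dist} and \cite{WN17}, finiteness of the supremum via the reformulation over $\mathcal{K}_{\max(C_{1},C_{2})}\times\mathcal{K}_{\max(C_{1},C_{2})}$), the same test pair $a=u$, $b=P_{\omega}[\psi_{1}](\varphi)$ for the second inequality in $(iv)$, and the same use of $V_{\psi_{1}}=V_{\psi_{2}}$ plus Theorem \ref{thm:samesing} to force $\psi_{1}=\psi_{2}$ in the separation half of $(ii)$. You merely spell out details the paper leaves terse (e.g.\ the tower identity of Lemma \ref{lem:Pro}$.(i)$ giving $P_{\omega}[\psi_{2}](P_{\omega}[\psi_{1}](\varphi))=v$, and the convention resolving symmetry in $(i)$), all of which check out.
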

\begin{proof}
The first point is trivial. By Proposition \ref{prop:Dist} and the main Theorem in \cite{WN17} $\tilde{d}_{\mathcal{A}}\in\mathbbm{R}_{\geq 0}$, and if $\psi_{1}=\psi_{2}$ then $\tilde{d}_{\mathcal{A}}(u,v)=d(u,v)$. For $(iv)$, instead, the first inequality is immediate, while the second inequality follows considering $a=u, b=P_{\omega}[\psi_{1}](\varphi)$ in the supremum.\\
Therefore it remains to prove that $\tilde{d}_{\mathcal{A}}(u,v)=0$ implies $u=v$. But if $\tilde{d}_{\mathcal{A}}(u,v)=0$ then in particular $V_{\psi_{1}}=V_{\psi_{2}}$, and Theorem \ref{thm:samesing} implies $\psi_{1}=\psi_{2}$. Hence the third point and Theorem \ref{thmA} conclude the proof.
\end{proof}
The map $\tilde{d}_{\mathcal{A}}$ does not seem to be a distance on $\bigsqcup_{\psi\in \mathcal{A}}\mathcal{P}(X,\omega,\psi)$, since it hardly satisfies the triangle inequality. Indeed it is composed of three parts, and clearly two parts behaves well for the triangle inequality, but the part given by the supremum seem to be very unstable since the set of the supremum depends on the functions $u,v$ taken. Therefore we want to modify $\tilde{d}_{\mathcal{A}}$ to get a distance $d_{\mathcal{A}}$ which still coincides with the $d$-distance on $\mathcal{P}(X,\omega,\psi)$ for any $\psi\in \mathcal{A}$. The next Lemma is the key point to proceed.
\begin{lem}
\label{lem:Keyy}
Let $u,v\in \mathcal{P}(X,\omega,\psi)$ for $\psi\in \mathcal{A}$. Then for any $m\in \mathbbm{N}$ and any $w_{1},\dots,w_{m}\in \bigsqcup_{\psi'\in \mathcal{A}}\mathcal{P}(X,\omega,\psi')$,
$$
d(u,v)\leq \tilde{d}_{\mathcal{A}}(u,w_{1})+\sum_{j=1}^{m-1}\tilde{d}_{\mathcal{A}}(w_{j},w_{j+1})+\tilde{d}_{\mathcal{A}}(w_{m},v).
$$
\end{lem}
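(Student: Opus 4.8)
The plan is to deduce the inequality $d(u,v)\le T$, with
\[
T:=\tilde{d}_{\mathcal{A}}(u,w_{1})+\sum_{j=1}^{m-1}\tilde{d}_{\mathcal{A}}(w_{j},w_{j+1})+\tilde{d}_{\mathcal{A}}(w_{m},v),
\]
from two elementary lower bounds for a single summand together with the triangle inequality for $d$ proved in Theorem \ref{thmA}. Set $w_{0}:=u$, $w_{m+1}:=v$, $\psi_{0}=\psi_{m+1}:=\psi$, and let $w_{j}\in\mathcal{P}(X,\omega,\psi_{j})$. By Proposition \ref{prop:Proper}$.(iv)$, for an edge with $\psi_{j+1}\preccurlyeq\psi_{j}$ one has both $\tilde{d}_{\mathcal{A}}(w_{j},w_{j+1})\ge d\big(w_{j+1},P_{\omega}[\psi_{j+1}](w_{j})\big)$ and $\tilde{d}_{\mathcal{A}}(w_{j},w_{j+1})\ge d\big(w_{j},P_{\omega}[\psi_{j}](\varphi_{j+1})\big)$, where $\varphi_{j+1}$ is a minimal entropy function for $w_{j+1}$. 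Combining the first bound with the contraction property of Proposition \ref{prop:Dist} and Lemma \ref{lem:Pro}$.(i)$, for every $\chi\preccurlyeq\min(\psi_{j},\psi_{j+1})$ I obtain
\[
\tilde{d}_{\mathcal{A}}(w_{j},w_{j+1})\ge d\big(P_{\omega}[\chi](w_{j}),P_{\omega}[\chi](w_{j+1})\big).
\]

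If $\psi_{j}\succcurlyeq\psi$ for all $j$, then choosing $\chi=\psi$ in the displayed inequality and summing, the in-fiber triangle inequality yields $T\ge d\big(P_{\omega}[\psi](u),P_{\omega}[\psi](v)\big)=d(u,v)$, since $P_{\omega}[\psi](u)=u$ and $P_{\omega}[\psi](v)=v$ as $u,v\preccurlyeq\psi$; this settles the case where the chain does not descend below $\psi$. In general, with $\psi^{*}:=\min_{j}\psi_{j}$, the same computation only gives $T\ge d\big(P_{\omega}[\psi^{*}](u),P_{\omega}[\psi^{*}](v)\big)$, which misses $d(u,v)$ by the nonnegative gap $d(u,v)-d\big(P_{\omega}[\psi^{*}](u),P_{\omega}[\psi^{*}](v)\big)$. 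The supremum in the definition of $\tilde{d}_{\mathcal{A}}$ is designed to pay for this gap. Fixing minimal entropy representatives $\varphi_{u},\varphi_{v}$ of $u,v$ and setting $g(\chi):=d\big(P_{\omega}[\chi](\varphi_{u}),P_{\omega}[\chi](\varphi_{v})\big)$ — a function non-decreasing in $\chi$ by Proposition \ref{prop:Dist} and Lemma \ref{lem:Pro}$.(i)$, with $g(\psi)=d(u,v)$ — an edge whose less singular fiber $\alpha_{j}:=\max(\psi_{j},\psi_{j+1})$ is $\preccurlyeq\psi$ and whose supremum set contains $P_{\omega}[\alpha_{j}](\varphi_{u})$ and $P_{\omega}[\alpha_{j}](\varphi_{v})$ contributes, upon taking $a=P_{\omega}[\alpha_{j}](\varphi_{u})$, $b=P_{\omega}[\alpha_{j}](\varphi_{v})$, a sup-term at least $g(\alpha_{j})-g(\beta_{j})$, where $\beta_{j}:=\min(\psi_{j},\psi_{j+1})$. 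Since $\psi_{0},\dots,\psi_{m+1}$ is a closed walk in the totally ordered set $\overline{\mathcal{A}}$ that dips to $\psi^{*}$, a total variation estimate (each edge realizing the increment $|g(\psi_{j})-g(\psi_{j+1})|=g(\alpha_{j})-g(\beta_{j})$) shows that these increments sum to at least $g(\psi)-g(\psi^{*})$, i.e.\ exactly the missing gap.

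The main obstacle is that the supremum defining $\tilde{d}_{\mathcal{A}}(w_{j},w_{j+1})$ ranges only over the \emph{bounded entropy} set $\mathcal{P}_{\max(C_{w_{j}},C_{w_{j+1}})}(X,\omega,\alpha_{j})$: an edge joining two intermediate potentials of small entropy need not contain $\varphi_{u},\varphi_{v}$ in its supremum set, hence cannot be guaranteed to deliver its increment $g(\alpha_{j})-g(\beta_{j})$. Only the two endpoint edges are automatically harmless, since $\max(C_{u},C_{w_{1}})\ge C_{u}$ and $\max(C_{w_{m}},C_{v})\ge C_{v}$ force their supremum sets to contain $\varphi_{u}$ and $\varphi_{v}$. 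I will remove this difficulty by strong induction on $m$, using the shorter instances of the statement itself as a restricted triangle inequality inside intermediate fibers: whenever the chain performs an excursion strictly below the common level $L\prec\psi$ of two of its potentials — after, if necessary, projecting the less singular of the two flanks down to $L$ by $P_{\omega}[L]$, a move one checks does not increase the corresponding edge sums — the inductive hypothesis applied in $\mathcal{E}^{1}(X,\omega,L)$ bounds the excursion below by the genuine distance $d$ between its flanks. Iterating collapses every deep excursion and reduces the problem to a chain whose descents below $\psi$ survive only at the endpoint edges, where the recovery of the preceding paragraph applies verbatim.

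The base case $m=1$ is direct and guides the whole argument: if $w_{1}$ is less singular than $\psi$ one projects to $\psi$ and closes the triangle, while if $w_{1}$ is more singular one lifts it to $\mathcal{E}^{1}(X,\omega,\psi)$ through a \emph{single} minimal entropy function via Proposition \ref{prop:Proper}$.(iv)$, obtaining $\tilde{d}_{\mathcal{A}}(u,w_{1})\ge d(u,\hat w)$ and $\tilde{d}_{\mathcal{A}}(w_{1},v)\ge d(\hat w,v)$ for a common lift $\hat w\in\mathcal{E}^{1}(X,\omega,\psi)$, and again closing the triangle in the fiber $\psi$. The delicate part to make fully rigorous is precisely the interaction between the bounded entropy supremum domains and the (possibly large) entropy of $u,v$, that is, the projection/collapse step that lets the endpoint sup-terms, reinforced by the inductively produced genuine distances, account for the entire gap down to $\psi^{*}$; everything else is a bookkeeping of the two bounds of Proposition \ref{prop:Proper}$.(iv)$, the contraction of Proposition \ref{prop:Dist}, and the in-fiber metric structure of Theorem \ref{thmA}.
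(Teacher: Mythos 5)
Your base case and your diagnosis of the central difficulty are both sound and agree with the paper: for $m=1$ you use exactly Proposition \ref{prop:Proper}$.(iv)$ together with the in-fiber triangle inequality, and you correctly see that telescoping the projected distances at the deepest level $\psi^{*}$ leaves a gap $g(\psi)-g(\psi^{*})$ that must be paid by the supremum terms, whose entropy-bounded domains $\mathcal{P}_{\max(C_{j},C_{j+1})}(X,\omega,\cdot)$ need not contain the projections of $\varphi_{u},\varphi_{v}$. This is indeed the heuristic behind the final telescoping in the paper (Step 6 of subsection \ref{ssec:Lem}), where the endpoint-anchored sup terms recover exactly those increments.

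The genuine gap is in the reduction you propose to make this rigorous, and it is not a mere technicality: the collapse move fails in the direction you need. Replacing the less singular flank $w_{i}$ of an excursion by $\hat{w}_{i}:=P_{\omega}[L](w_{i})$ is harmless on the excursion side (the $d$-terms agree by Lemma \ref{lem:Pro}$.(i)$, the volume term drops, and any competitor $\big(P_{\omega}[L](\varphi_{a}),P_{\omega}[L](\varphi_{b})\big)$ in the new sup is dominated by the lift $\big(P_{\omega}[\psi_{i}](\varphi_{a}),P_{\omega}[\psi_{i}](\varphi_{b})\big)$ in the old one), but re-inserting $\hat{w}_{i}$ into the long chain is not: the far-side edge $\tilde{d}_{\mathcal{A}}(w_{i-1},\hat{w}_{i})$ carries the volume term $V_{w_{i-1}}-V_{L}>V_{w_{i-1}}-V_{w_{i}}$ and a supremum with the deeper contraction $P_{\omega}[L]$ in place of $P_{\omega}[\psi_{i}]$, so both extra terms \emph{increase}; and the alternative of keeping $w_{i}$ and inserting $\hat{w}_{i}$ as a new vertex adds the strictly positive edge $\tilde{d}_{\mathcal{A}}(w_{i},\hat{w}_{i})=\sup\{\cdots\}+V_{w_{i}}-V_{L}$, which the old excursion sum does not dominate. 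In short, $\tilde{d}_{\mathcal{A}}$ is not monotone under pushing one endpoint to a more singular level — precisely because of its sup and volume parts — so "one checks [it] does not increase the corresponding edge sums" is false for your move, and the induction cannot be run. The paper's proof replaces your collapse by a different double reduction: it removes local maxima of the singularity profile by substituting $w_{j}$ with $P_{\omega}[\psi_{j-1}](\varphi_{j})$, i.e.\ projecting to a \emph{neighbor's} level so that the new vertex shares a fiber with $w_{j-1}$ and the new edge is an honest in-fiber distance controlled by Proposition \ref{prop:Proper}$.(iv)$; it deletes vertices whose entropy constant dominates both neighbors' via inequality (\ref{eqn:Three}), whose proof concatenates the nested sup domains; and after aligning the entropy minimum $i_{0}$ with the singularity minimum $j_{0}$ it has strictly monotone entropy constants along each descent, which is exactly the property guaranteeing that every sup domain contains the projections of the dip's minimal entropy function $\varphi_{j_{0}}$, so the telescoping delivers the increments your total-variation estimate requires. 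Without an analogue of these three steps your plan — as you yourself concede at the end — does not close.
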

The proof of this Lemma is quite laborious and it will be presented in the subsection \ref{ssec:Lem}.\\

Next we define $d_{\mathcal{A}}:\bigsqcup_{\psi\in \mathcal{A}}\mathcal{P}(X,\omega,\psi)\times \bigsqcup_{\psi\in \mathcal{A}}\mathcal{P}(X,\omega,\psi)\to \mathbbm{R}_{\geq 0}$ as
$$
d_{\mathcal{A}}(u,v):=\inf \Big\{\tilde{d}_{\mathcal{A}}(u,w_{1})+\sum_{j=1}^{m-1}\tilde{d}_{\mathcal{A}}(w_{j},w_{j+1})+\tilde{d}_{\mathcal{A}}(w_{m},v)\Big\}
$$
where the infimum is over all possible chains in $\bigsqcup_{\psi\in \mathcal{A}}\mathcal{P}(X,\omega,\psi)$.\\
We can now prove Theorem \ref{thmC}:
\begin{reptheorem}{thmC}
$\Big(\bigsqcup_{\psi\in \mathcal{A}}\mathcal{P}(X,\omega,\psi),d_{\mathcal{A}}\Big)$ is a metric space, and denoting with $X_{\mathcal{A}}$ its metric completion, we have
$$
X_{\mathcal{A}}=\bigsqcup_{\psi\in \mathcal{\overline{A}}}\mathcal{E}^{1}(X,\omega,\psi)
$$
where $\mathcal{\overline{A}}\subset \mathcal{M}$ is the closure of $\mathcal{A}$ as subset of $PSH(X,\omega)$ with its $L^{1}$-topology and where we identify $\mathcal{E}^{1}(X,\omega,\psi')$ with a singleton $P_{\psi'}$ if $\psi':=\inf\mathcal{A}\in\mathcal{M}\setminus \mathcal{M}^{+}$.\\
In particular the complete metric space $(X_{\mathcal{A}},d_{\mathcal{A}})$ restricts to $\big(\mathcal{E}^{1}(X,\omega,\psi),d\big)$ on $\mathcal{E}^{1}(X,\omega,\psi)$ for any $\psi\in\mathcal{\overline{A}}$. 
\end{reptheorem}
\begin{proof}
\textbf{Step 1: $\Big(\bigsqcup_{\psi\in\mathcal{A}}\mathcal{P}(X,\omega,\psi),d_{\mathcal{A}}\Big)$ is a metric space}.\\
As a consequence of Lemma \ref{lem:Keyy} we immediately get
$$
d_{\mathcal{A}|\mathcal{P}(X,\omega,\psi)\times \mathcal{P}(X,\omega,\psi)}=d
$$
for any $\psi\in \mathcal{A}$. Therefore to prove that $d_{\mathcal{A}}$ is a distance on $\bigsqcup_{\psi\in\mathcal{A}}\mathcal{P}(X,\omega,\psi)$ it remains to prove that $d_{\mathcal{A}}(u,v)=0$ implies $u=v$ since the triangle inequality easily follows from the construction (see also Proposition \ref{prop:Proper}). But given $w_{1},\dots,w_{m}\in\bigsqcup_{\psi\in \mathcal{A}}\mathcal{P}(X,\omega,\psi)$, the uniform bound
$$
\tilde{d}_{\mathcal{A}}(u,w_{1})+\tilde{d}_{\mathcal{A}}(w_{1},w_{2})+\cdots+\tilde{d}_{\mathcal{A}}(w_{m},v)\geq |V_{u}-V_{v}|
$$ 
holds. Therefore $d_{\mathcal{A}}(u,v)=0$ leads to $V_{u}=V_{v}$, and since $\mathcal{A}$ is totally ordered, by Theorem \ref{thm:samesing}, we obtain that $u,v\in \mathcal{E}^{1}(X,\omega,\psi)$ for a common $\psi\in \mathcal{A}$. Hence $0=d_{\mathcal{A}}(u,v)=d(u,v)$, which implies $u=v$ and concludes the first step.\\
\textbf{Step 2: $\Big(\bigsqcup_{\psi\in \mathcal{\overline{A}}} \big(\mathcal{E}^{1}(X,\omega,\psi),d\big) \Big)\subset (X_{\mathcal{A}},d_{\mathcal{A}})$.}\\
For any $\psi\in (\mathcal{\overline{A}}\setminus \mathcal{A})$ there exists a monotone sequence $\{\psi_{k}\}_{k\in\mathbbm{N}}$ such that $\psi=\big(\lim_{k\to \infty}\psi_{k}\big)^{*}$. Thus, letting $\mathcal{P}_{C}(X,\omega,\psi)\ni u=P_{\omega}[\psi](\varphi)$ for $\varphi\in \mathcal{K}_{C}$ minimal entropy function for $u$ and letting $u_{k}:=P_{\omega}[\psi_{k}](\varphi)$, we claim that $\{u_{k}\}_{k\in\mathbbm{N}}$ is a Cauchy sequence with respect the distance $d_{\mathcal{A}}$.\\
Indeed if $\{\psi_{k}\}_{k\in\mathbbm{N}}$ is increasing, then for any $j,k$ such that $j\geq k$ we have
\begin{multline*}
d_{\mathcal{A}}(u_{k},u_{j})\leq \tilde{d}_{\mathcal{A}}(u_{k},u_{j})\leq\sup_{a,b\in \mathcal{P}_{C}(X,\omega,\psi_{j})}\Big\{d(a,b)-d\big(P_{\omega}[\psi_{k}](a),P_{\omega}[\psi_{k}](b)\big)\Big\}+V_{u_{j}}-V_{u_{k}}\leq\\
\leq \sup_{a,b\in \mathcal{P}_{C}(X,\omega,\psi)}\Big\{d(a,b)-d\big(P_{\omega}[\psi_{k}](a),P_{\omega}[\psi_{k}](b)\big)\Big\}+V_{\psi}-V_{\psi_{k}}
\end{multline*}
by the definition of $d_{\mathcal{A}}$ and Proposition \ref{prop:Dist} since $\psi \succcurlyeq \psi_{k}$ for any $k\in\mathbbm{N}$. Therefore by Proposition \ref{prop:UC} (see also Lemma \ref{lem:Volumes1}) $\{u_{k}\}_{k\in\mathbbm{N}}$ is a Cauchy sequence in $\Big(\bigsqcup_{\psi\in \mathcal{A}}\mathcal{P}(X,\omega,\psi),d_{\mathcal{A}}\Big)$.\\
If instead $\psi_{k}\searrow \psi$, we first denote by $C_{1}\in\mathbbm{R}_{\geq 0}$ the minimal constant such that $u_{1}=P_{\omega}[\psi_{1}](\phi)$ for $\phi\in \mathcal{K}_{C_{1}}$. Thus for any $j,k$ such that $j\geq k$ we have
\begin{multline*}
d_{\mathcal{A}}(u_{k},u_{j})\leq \tilde{d}_{\mathcal{A}}(u_{k},u_{j})\leq\sup_{a,b\in \mathcal{P}_{C_{1}}(X,\omega,\psi_{k})}\Big\{d(a,b)-d\big(P_{\omega}[\psi_{j}](a),P_{\omega}[\psi_{j}](b)\big)\Big\}+V_{u_{k}}-V_{u_{j}}\leq\\
\leq \sup_{a,b\in \mathcal{P}_{C_{1}}(X,\omega,\psi_{k})}\Big\{d(a,b)-d\big(P_{\omega}[\psi](a),P_{\omega}[\psi](b)\big)\Big\}+V_{\psi_{k}}-V_{\psi},
\end{multline*}
and as before Proposition \ref{prop:UC} and Lemma \ref{lem:Volumes1} imply that $\{u_{k}\}_{k\in\mathbbm{N}}$ is a Cauchy sequence.\\
Hence we define the map
$$
\tilde{\Phi}: \Big(\bigsqcup_{\psi\in \mathcal{\overline{A}}} \big(\mathcal{P}(X,\omega,\psi),d\big) \Big)\to (X_{\mathcal{A}},d_{\mathcal{A}})
$$
as $\tilde{\Phi}(u):=[u_{k}]$ where we recall that $(X_{\mathcal{A}},d_{\mathcal{A}})$ is the metric completion of $\bigsqcup_{\psi\in\mathcal{A}}\big(\mathcal{P}(X,\omega,\psi),d\big)$. We need to check that it is well-defined.\\
Let assume $u=P_{\omega}[\psi](\varphi)=P_{\omega}[\psi](\varphi')$ for $\varphi,\varphi'\in \mathcal{K}_{C}$ minimal entropy functions. Define also $u_{k}:=P_{\omega}[\psi_{k}](\varphi)$, $u_{k}':=P_{\omega}[\psi_{k}](\varphi')$ where $\psi=\big(\lim_{k\to \infty}\psi_{k}\big)^{*}$ monotonically. Then by Proposition \ref{prop:UC}
$$
\limsup_{k\to\infty} d_{\mathcal{A}}(u_{k},u_{k}')=\lim_{k\to\infty}d(u_{k},u_{k}')=d(u,u)=0.
$$
Next assume that $u=P_{\omega}[\psi](\varphi)$, $u_{k}:=P_{\omega}[\psi_{k}](\varphi)$, $u_{k}':=P_{\omega}[\psi_{k}'](\varphi)$ for $\{\psi_{k}\}_{k\in\mathbbm{N}}, \{\psi_{k}'\}_{k\in\mathbbm{N}}\subset \mathcal{A}$ monotone sequence converging to $\psi$ almost everywhere. We need to check that $d_{\mathcal{A}}(u_{k},u_{k}')\to 0$ as $k\to \infty$. Let $C_{1},C_{1}'\in\mathbbm{R}$ be minimal constants such that $u_{1}=P_{\omega}[\psi_{1}](\phi), u_{1}=P_{\omega}[\psi_{1}'](\phi')$ for $\phi\in \mathcal{K}_{C_{1}}, \phi'\in\mathcal{K}_{C_{1}'}$, and set $C_{2}:=\max(C,C_{1},C_{1}')$. Then for any $k\in\mathbbm{N}$ such that $\psi_{k}'\preccurlyeq \psi_{k}$,
$$
d_{\mathcal{A}}(u_{k},u_{k}')\leq \tilde{d}_{\mathcal{A}}(u_{k},u_{k}') \leq \sup_{(a,b)\in \mathcal{P}_{C_{2}}(X,\omega,\psi_{k})}\Big\{d(a,b)-d\big(P_{\omega}[\psi_{k}'](a),P_{\omega}[\psi_{k}'](b)\big)\Big\}+V_{\psi_{k}}-V_{\psi_{k}'}
$$
and similarly if $\psi_{k}'\succcurlyeq \psi_{k}$. Therefore, proceeding similarly as before, it is not difficult to check that $d_{\mathcal{A}}(u_{k},u_{k}')\to 0$ as $k\to \infty$ using again Proposition \ref{prop:UC}. Hence $\tilde{\Phi}$ is well-defined.\newline
We need also to check that $\tilde{\Phi}$ is injective. Fix $u,v\in \bigsqcup_{\psi\in \mathcal{\overline{A}}} \mathcal{P}(X,\omega,\psi)$ such that $u\in\mathcal{P}(X,\omega,\psi_{1}),v\in\mathcal{P}(X,\omega,\psi_{2})$ for $\psi_{1},\psi_{2}\in \overline{\mathcal{A}}$, and choose $\{\psi_{1,k},\psi_{2,k}\}_{k\in\mathbbm{N}}\subset \mathcal{A}$ sequences such that $\psi_{1,k}\to \psi_{1}, \psi_{2,k}\to \psi_{2}$ monotonically. Then, by definition of metric completion and of the metric $d_{\mathcal{A}}$, the corresponding points $\tilde{\Phi}(u)=[u_{k}], \tilde{\Phi}(v)=[v_{k}]$ satisfies
$$
d_{\mathcal{A}}\big([u_{k}],[v_{k}]\big):=\lim_{k\to \infty}d_{\mathcal{A}}(u_{k},v_{k})\geq \lim_{k\to \infty} |V_{\psi_{1,k}}-V_{\psi_{2,k}}|=|V_{\psi_{1}}-V_{\psi_{2}}|.
$$
In particular, if $d_{\mathcal{A}}\big([u_{k}],[v_{k}]\big)=0$ then necessarily $\psi_{1}=\psi_{2}$ (see also Lemma \ref{lem:Volumes1}). But if $u:=P_{\omega}[\psi](\varphi_{1}),v:=P_{\omega}[\psi](\varphi_{2})$ for minimal entropy functions $\varphi_{1},\varphi_{2}$ then
\begin{equation}
\label{eqn:Final?}
\lim_{k\to \infty}d_{\mathcal{A}}(u_{k},v_{k})=\lim_{k\to \infty}d(u_{k},v_{k})=d(u,v)
\end{equation}
where $u_{k}:=P_{\omega}[\psi_{k}](\varphi_{1})$, $v_{k}:=P_{\omega}[\psi_{k}](\varphi_{2})$ for $\{\psi_{k}\}_{k\in\mathbbm{N}}\subset \mathcal{A}$ monotonically converging a.e. to $\psi$, and where the last equality again follows from Proposition \ref{prop:UC}. Thus if $\lim_{k\to\infty}d_{\mathcal{A}}(u_{k},v_{k})=0$ then $u=v$ (recall that in the case $\psi=\inf \mathcal{A}\in\mathcal{M}\setminus\mathcal{M}^{+}$ we identify $u,v$ with the singleton $P_{\psi}$), and the injectivity of $\tilde{\Phi}$ follows.\newline
Finally note also that $(\ref{eqn:Final?})$ implies that $\tilde{\Phi}$ is an isometric embedding, which admits a unique continuous extension
$$
\tilde{\Phi}: \Big(\bigsqcup_{\psi\in \mathcal{\overline{A}}} \big(\mathcal{E}^{1}(X,\omega,\psi),d\big) \Big)\to (X_{\mathcal{A}},d_{\mathcal{A}})
$$
thanks to Lemma \ref{lem:ProDense}.\newline
\textbf{Step 3: set up the final strategy.}\\
It remains to prove that $\bigsqcup_{\psi\in\mathcal{\overline{A}}}\big(\mathcal{E}^{1}(X,\omega,\psi),d_{\mathcal{A}}\big)$ is complete. Thus let $\{u_{j}\}_{j\in\mathbbm{N}}\subset \bigsqcup_{\psi\in\mathcal{\overline{A}}}\mathcal{P}(X,\omega,\psi)$ be a Cauchy sequence. Up to extract a subsequence, we may assume $d_{\mathcal{A}}(u_{j},u_{j+1})\leq \frac{1}{2^{j}}$. For any $j\in\mathbbm{N}$ let also $\varphi_{j}\in\mathcal{P}(X,\omega)$ a minimal entropy function for $u_{j}$ and $\psi_{j}\in\mathcal{\overline{A}}$ such that $u_{j}\in\mathcal{P}(X,\omega,\psi_{j})$. Since $\mathcal{\overline{A}}$ is totally ordered, up to consider a subsequence, we may assume that $\{\psi_{j}\}_{j\in\mathbbm{N}}$ converges monotonically a.e. to $\psi\in \mathcal{\overline{A}}$.\\
\textbf{Step 4: $\{\psi_{j}\}_{j\in\mathbbm{N}}$ increasing.}\\
Let for any $k\geq j$, $v_{j,k}:=P_{\omega}\Big(P_{\omega}[\psi_{j}](\varphi_{j}),\cdots,P_{\omega}[\psi_{j}](\varphi_{k})\Big)$ and let for any $k\geq j$, $i\in\mathbbm{N}$, $v_{j,k}^{i}:=P_{\omega}\Big(P_{\omega}[\psi_{i}](\varphi_{j}),\cdots,P_{\omega}[\psi_{i}](\varphi_{k})\Big)$. Note that $v_{j,k}^{j}=v_{j,k}$ and that $v_{j,k}=P_{\omega}(u_{j},v_{j+1,k}^{j})$. Moreover we claim that $P_{\omega}[\psi_{j}](v_{j,k}^{i})=v_{j,k}$ if $i\geq j$. Indeed $P_{\omega}[\psi_{j}](v_{j,k}^{i})\leq v_{j,k}$ since $v_{j,k}^{i}\leq P_{\omega}[\psi_{i}](\varphi_{j}),\dots, P_{\omega}[\psi_{i}](\varphi_{k})$ implies $P_{\omega}[\psi_{j}](v_{j,k}^{i})\leq P_{\omega}[\psi_{j}](\varphi_{j}),\dots,P_{\omega}[\psi_{j}](\varphi_{k})$ by Lemma \ref{lem:Pro}. While the reverse inequality follows applying $P_{\omega}[\psi_{j}](\cdot)$ to the trivial inequality $v_{j,k}\leq v_{j,k}^{i}$. As a consequence we get that
$$
d(u_{j},v_{j,k})= d\big(u_{j},P_{\omega}(u_{j},v_{j+1,k}^{j})\big)\leq d(u_{j}, v_{j+1,k}^{j})\leq d_{\mathcal{A}}(u_{j},v_{j+1,k})
$$
where the last inequality follows from Proposition \ref{prop:Proper}.$(iv)$. Iterating, by the triangle inequality we have
$$
d(u_{j},v_{j,k})\leq \sum_{l=0}^{k-j-1}d(u_{j+l},u_{j+l+1})\leq \sum_{l=0}^{k-j-l}\frac{1}{2^{j+l}}\leq \frac{1}{2^{j-1}}.
$$
Clearly $v_{j,k}$ is decreasing in $k$, thus, letting $C_{j}\in\mathbbm{N}$ such that $v_{j,k}\leq \psi_{j}+C_{j}$ for any $k\in\mathbbm{N}$, we get
$$
C_{j}-E_{\psi_{j}}(v_{j,k})=E_{\psi_{j}}(\psi_{j}+C_{j})-E_{\psi_{j}}(v_{j,k})=d(\psi_{j}+C_{j},v_{j,k})\leq d(\psi_{j}+C_{j},u_{j})+\frac{1}{2^{j-1}},
$$
which implies that $v_{j}:=\lim_{k\to \infty}v_{j,k}\in\mathcal{E}^{1}(X,\omega,\psi_{j})$ by Proposition \ref{prop:Decr}. Moreover $d(u_{j},v_{j})\leq 2^{-j+1}$ by continuity along decreasing sequence. Observe also that $v_{j}\leq v_{j+1}$ by construction since $v_{j}\leq v_{j,k}\leq v_{j,k}^{j+1}\leq v_{j+1,k}$ for any $k\geq j+1$. \\
Then by Lemma \ref{lem:Bound} there exists two uniform constants $A>1,B>0$ such that
$$
\sup_{X}(v_{j}-\psi_{j})=\sup_{X}v_{j}\leq \frac{1}{V_{\psi_{j}}}\big(Ad(\psi_{j},v_{j})+B\big)
$$
which implies that $u:=\big(\lim_{j\to \infty}v_{j}\big)^{*}\in PSH(X,\omega,\psi)$. Therefore, assuming $\sup_{X}u=0$ up to add a constant, by Theorem \ref{thm:conv} we also have $MA_{\omega}(v_{j})\to MA_{\omega}(u)$ weakly, which implies $V_{u}=V_{\psi}$ and, for any $m\in\mathbbm{N}$ fixed,
\begin{multline*}
\int_{X}\big(\psi-\max(u,\psi-m)\big)MA_{\omega}\big(\max(u,\psi-m)\big)=\lim_{j\to \infty}\int_{X}\big(\psi_{j}-\max(v_{j},\psi_{j}-m)\big)MA_{\omega}\big(\max(v_{j},\psi_{j}-m)\big)\leq\\
\leq \lim_{j\to \infty} (n+1)d\big(\psi_{j},\max(v_{j},\psi_{j}-m)\big)\leq \lim_{j\to \infty}(n+1)d(\psi_{j},v_{j})
\end{multline*}
using also that $\max(v_{j},\psi_{j}-m)\nearrow \max(u,\psi-m)$ almost everywhere. Therefore $u\in\mathcal{E}^{1}(X,\omega,\psi)$ as a consequence of
$$
d(\psi_{j},v_{j})\leq d_{\mathcal{A}}(\psi_{j},\psi_{1})+d(\psi_{1},u_{1})+d_{\mathcal{A}}(u_{1},u_{j})+d(u_{j},v_{j})\leq V_{\psi}-V_{\psi_{1}}+d(\psi_{1},u_{1})+2.
$$
Thus to finish this step it remains to check that $d_{\mathcal{A}}(u,u_{j})\to 0$ as $j\to \infty$, or equivalently that $d_{\mathcal{A}}(u,v_{j})\to 0$.\\
Set for any $k\geq j$, $v_{k}^{j}:=P_{\omega}[\psi_{j}](v_{k})$. Then by construction $\{v_{k}^{j}\}_{k\geq j}$ is an increasing sequence converging strongly in $\mathcal{E}^{1}(X,\omega,\psi_{j})$ to $P_{\omega}[\psi_{j}](u)$. For $\epsilon>0$ fixed, let also $\phi_{\epsilon}\in\mathcal{H}$ such that $d\big(u,P_{\omega}[\psi](\phi_{\epsilon})\big)\leq \epsilon$. Next, for any $j$ fixed let $s\in\mathbbm{N}$ depending on $j,\epsilon$ such that $d\big(v_{j+s}^{j},P_{\omega}[\psi_{j}](u)\big)\leq \epsilon$. Thus by the triangle inequality and Proposition \ref{prop:Dist} we have
\begin{gather*}
d_{\mathcal{A}}(v_{j},u)\leq \sum_{l=0}^{s-1}d(v_{j+l}^{j},v_{j+l+1}^{j})+d\big(v_{j+s}^{j},P_{\omega}[\psi_{j}](u)\big)+d\big(P_{\omega}[\psi_{j}](u),P_{\omega}[\psi_{j}](\phi_{\epsilon})\big)+\\
+d_{\mathcal{A}}\big(P_{\omega}[\psi_{j}](\phi_{\epsilon}),P_{\omega}[\psi](\phi_{\epsilon})\big)+d\big(P_{\omega}[\psi](\phi_{\epsilon}),u\big)\leq \frac{1}{2^{j-2}}+3\epsilon+d_{\mathcal{A}}\big(P_{\omega}[\psi_{j}](\phi_{\epsilon}),P_{\omega}[\psi](\phi_{\epsilon})\big),
\end{gather*}
which by Proposition \ref{prop:UC} implies $\limsup_{j\to \infty}d_{\mathcal{A}}(v_{j},u)\leq 3\epsilon$ since
\begin{multline*}
d_{\mathcal{A}}\big(P_{\omega}[\psi_{j}](\phi_{\epsilon}),P_{\omega}[\psi](\phi_{\epsilon})\big)\leq \tilde{d}_{\mathcal{A}}\big(P_{\omega}[\psi_{j}](\phi_{\epsilon}),P_{\omega}[\psi](\phi_{\epsilon})\big)\leq\\
\leq\sup_{\varphi_{1},\varphi_{2}\in \mathcal{P}_{C_{\epsilon}}(X,\omega,\psi)}\{d(\varphi_{1},\varphi_{2})-d\big(P_{\omega}[\psi_{j}](\varphi_{1}),P_{\omega}[\psi_{j}](\varphi_{2})\big)\}+V_{\psi}-V_{\psi_{j}}\to 0
\end{multline*}
for a certain constant $C_{\epsilon}\in\mathbbm{R}$.\\
\textbf{Step 4: $\{\psi_{j}\}_{j\in\mathbbm{N}}$ decreasing.}\\
We define for any $j\in\mathbbm{N}$, $w_{j}:=P_{\omega}[\psi](u_{j})$. Clearly $\{w_{j}\}_{j\in\mathbbm{N}}\subset \mathcal{E}^{1}(X,\omega,\psi)$ is a Cauchy sequence since for any $j\geq k$
$$
d(w_{j},w_{k})\leq d\big(u_{j},P_{\omega}[\psi_{j}](u_{k})\big)\leq d_{\mathcal{A}}(u_{j},u_{k})\leq \frac{1}{2^{k-1}}
$$
by Proposition \ref{prop:Dist} and Proposition \ref{prop:Proper}. Thus $w_{j}$ converges strongly to a function $u$ in $\mathcal{E}^{1}(X,\omega,\psi)$ and to conclude the proof it remains to prove that $d_{\mathcal{A}}(u_{j},u)\to 0$ as $j\to \infty$. Therefore, letting for any $k\in\mathbbm{N}$, $\phi_{k}\in\mathcal{H}$ such that $d\big(u_{k},P_{\omega}[\psi_{k}](\phi_{k})\big)\leq 1/k$, we get for any $k\leq j$
\begin{multline}
\label{eqn:Con}
d_{\mathcal{A}}(u_{j},u)\leq d\big(u_{j},P_{\omega}[\psi_{j}](u_{k})\big)+d\big(P_{\omega}[\psi_{j}](u_{k}),P_{\omega}[\psi_{j}](\phi_{k})\big)+d_{\mathcal{A}}\big(P_{\omega}[\psi_{j}](\phi_{k}),P_{\omega}[\psi](\phi_{k})\big)+\\
+d\big(P_{\omega}[\psi](\phi_{k}),u\big)\leq d_{\mathcal{A}}(u_{j},u_{k})+d\big(u_{k},P_{\omega}[\psi_{k}](\phi_{k})\big)+d_{\mathcal{A}}\big(P_{\omega}[\psi_{j}](\phi_{k}),P_{\omega}[\psi](\phi_{k})\big)+\\
+d\big(P_{\omega}[\psi](\phi_{k}),u\big)\leq \frac{1}{2^{k-1}}+\frac{1}{k}+d_{\mathcal{A}}\big(P_{\omega}[\psi_{j}](\phi_{k}),P_{\omega}[\psi](\phi_{k})\big)+d\big(P_{\omega}[\psi](\phi_{k}),u\big)
\end{multline}
combining Proposition \ref{prop:Dist} and Proposition \ref{prop:Proper}. Therefore since clearly $P_{\omega}[\psi](\phi_{k})$ converges strongly to $u$ in $\mathcal{E}^{1}(X,\omega,\psi)$ and as before $\limsup_{j\to \infty}d_{\mathcal{A}}\big(P_{\omega}[\psi_{j}](\phi_{k}),P_{\omega}[\psi](\phi_{k})\big)=0$, it follows from (\ref{eqn:Con}) that $\limsup_{j\to \infty}d_{\mathcal{A}}(u_{j},u)=0$ letting $j\to \infty$ and then $k\to \infty$.
\end{proof}
\subsection{Proof of Lemma \ref{lem:Keyy}.}
\label{ssec:Lem}
The proof of Lemma \ref{lem:Keyy} proceeds by induction on $m\in\mathbbm{N} $ length of the chain.\\
\textbf{Step 1 ($\mathbf{m=1}$):} Assume $w\in \mathcal{P}(X,\omega,\psi')$ for $\psi'\in \mathcal{A}$. Then by Proposition \ref{prop:Proper}$.(iv)$ we get that
$$
\tilde{d}_{\mathcal{A}}(u,w)+\tilde{d}_{\mathcal{A}}(w,v)\geq d(u, P_{\omega}[\psi](\varphi))+d(P_{\omega}[\psi](\varphi),v)\geq d(u,v)
$$
where $\varphi\in \mathcal{P}(X,\omega)$ is a minimal entropy function for $w$.\\
\textbf{Step 2 ($\mathbf{m\to m+1}$): reduce to an easier case 1.} Assume now that the Lemma holds for any chain of length $n\leq m\in\mathbbm{N}$, and let $w_{1},\dots,w_{m+1}\in\bigsqcup_{\psi'\in \mathcal{A}}\mathcal{P}(X,\omega,\psi')$. To fix the notations assume $w_{j}\in \mathcal{P}(X,\omega,\psi_{j})$ and that, for any $j=0,\dots,n$, $\varphi_{j}\in \mathcal{K}_{C_{j}}$ is a choice of minimal entropy functions for $w_{j}$.\\
Next, using the definition of $\tilde{d}_{\mathcal{A}}$ and Proposition \ref{prop:Proper}, if $\psi_{j+1}\preccurlyeq \psi_{j-1}\preccurlyeq \psi_{j}$ then
$$
\tilde{d}_{\mathcal{A}}(w_{j-1},w_{j})+\tilde{d}_{\mathcal{A}}(w_{j},w_{j+1})\geq \tilde{d}_{\mathcal{A}}(w_{j-1},P_{\omega}[\psi_{j-1}](\varphi_{j}))+\tilde{d}_{\mathcal{A}}(P_{\omega}[\psi_{j-1}](\varphi_{j}),w_{j+1}).
$$
Therefore we may assume there exists $j_{0}\in\{1,\dots,m+1\}$ such that $\psi\succcurlyeq \psi_{1}\succcurlyeq \cdots\succcurlyeq \psi_{j_{0}}$ and $\psi_{j_{0}}\preccurlyeq \psi_{j_{0}+1}\preccurlyeq \cdots\preccurlyeq \psi$.\\
\textbf{Step 3 ($\mathbf{m\to m+1}$): reduce to an easier case 2.} We claim that if there exists $j\in\{1,\dots,m+1\}$ such that $C_{j}\geq \max(C_{j-1},C_{j+1})$ (where we set $w_{0}:=u$, $w_{m+2}:=v$) then
\begin{equation}
\label{eqn:Three}
\tilde{d}_{\mathcal{A}}(w_{j-1},w_{j})+\tilde{d}_{\mathcal{A}}(w_{j},w_{j+1})\geq \tilde{d}_{\mathcal{A}}(w_{j-1},w_{j+1}).
\end{equation}
Indeed, if $j\neq j_{0}$, assuming by symmetry $j<j_{0}$, then by Lemma \ref{lem:Pro} and Proposition \ref{prop:Dist} the inequality (\ref{eqn:Three}) is an easy consequence of
\begin{multline*}
d\big(P_{\omega}[\psi_{j}](w_{j-1}),w_{j}\big)+d\big(P_{\omega}[\psi_{j+1}](w_{j}),w_{j+1}\big)\geq d\big(P_{\omega}[\psi_{j+1}](w_{j-1}),P_{\omega}[\psi_{j+1}](w_{j})\big)+\\
+ d\big(P_{\omega}[\psi_{j+1}](w_{j}),w_{j+1}\big)\geq d\big(P_{\omega}[\psi_{j+1}](w_{j-1}),w_{j+1}\big),
\end{multline*}
and of
\begin{multline*}
\sup_{a,b\in\mathcal{P}_{C_{j}}(X,\omega,\psi_{j-1})}\Big\{d(a,b)-d\big(P_{\omega}[\psi_{j}](a),P_{\omega}[\psi_{j}](b)\big)\Big\}+\\
+\sup_{a,b\in\mathcal{P}_{C_{j}}(X,\omega,\psi_{j})}\Big\{d(a,b)-d\big(P_{\omega}[\psi_{j+1}](a),P_{\omega}[\psi_{j+1}](b)\big)\Big\}\geq\\
\geq\sup_{a,b\in\mathcal{P}_{\max(C_{j-1},C_{j+1})}(X,\omega,\psi_{j-1})}\Big\{d(a,b)-d\big(P_{\omega}[\psi_{j+1}](a),P_{\omega}[\psi_{j+1}](b)\big)\Big\}.
\end{multline*}
In the case $j=j_{0}$, instead, assuming $\psi_{j-1}\preccurlyeq \psi_{j+1}$ the inequality (\ref{eqn:Three}) follows from
\begin{multline*}
d\big(P_{\omega}[\psi_{j}](w_{j-1}),w_{j}\big)+d\big(P_{\omega}[\psi_{j}](w_{j+1}),w_{j}\big)+\sup_{a,b\in \mathcal{P}_{C_{j}}(X,\omega,\psi_{j-1})}\Big\{d(a,b)-d\big(P_{\omega}[\psi_{j}](a),P_{\omega}[\psi_{j}](b)\big)\Big\}\geq\\
\geq d\big(P_{\omega}[\psi_{j}](w_{j-1}),P_{\omega}[\psi_{j}](w_{j+1})\big)+d\big(w_{j-1},P_{\omega}[\psi_{j-1}](w_{j+1})\big)-d\big(P_{\omega}[\psi_{j}](w_{j-1}),P_{\omega}[\psi_{j}](w_{j+1})\big)=\\
=d\big(w_{j-1},P_{\omega}[\psi_{j-1}](w_{j+1})\big).
\end{multline*}
Indeed it implies
\begin{multline*}
\tilde{d}_{\mathcal{A}}(w_{j-1},w_{j})+\tilde{d}_{\mathcal{A}}(w_{j},w_{j+1})\geq d\big(w_{j-1},P_{\omega}[\psi_{j-1}](w_{j+1})\big)+\\
+\sup_{a,b\in \mathcal{P}_{C_{j}}(X,\omega,\psi_{j+1})}\Big\{d(a,b)-d\big(P_{\omega}[\psi_{j}](a),P_{\omega}[\psi_{j}](b)\big)\Big\}+V_{w_{j+1}}-V_{w_{j-1}}\geq \tilde{d}_{\mathcal{A}}(w_{j-1},w_{j+1}).
\end{multline*}
Therefore, using again the inductive hypothesis, we may assume there exists $i_{0}\in\{0,\dots,m+2\}$ such that $C_{0}>C_{1}>\cdots>C_{i_{0}-1}\geq C_{i_{0}}$ and $C_{i_{0}}\leq C_{i_{0}+1}<\cdots<C_{m+1}<C_{m+2}$, where moreover $C_{i_{0}}<\max(C_{i_{0}-1},C_{i_{0}+1})$ (in the extreme cases $i_{0}=0,m+2$ the last inequality obviously restricts respectively to $C_{i_{0}}=C_{0}<C_{1}$ and to $C_{i_{0}}=C_{m+2}<C_{m+1}$).\\
\textbf{Step 4 ($\mathbf{m\to m+1}$): case $\mathbf{|i_{0}-j_{0}|>1}$.} By symmetry we may assume $i_{0}<j_{0}-1$. So $C_{j_{0}-2}\leq C_{j_{0}-1}<C_{j_{0}}<C_{j_{0}+1}$, which implies
$$
\tilde{d}_{\mathcal{A}}(w_{j_{0}-1},w_{j_{0}})+\tilde{d}_{\mathcal{A}}(w_{j_{0}},w_{j_{0}+1})\geq\tilde{d}_{\mathcal{A}}(w_{j_{0}-1},P_{\omega}[\psi_{j_{0}}](w_{j_{0}-1})\big)+\tilde{d}_{\mathcal{A}}\big(P_{\omega}[\psi_{j_{0}}](w_{j_{0}-1}),w_{j_{0}+1}\big)
$$
using the definition. Letting $\tilde{w}:=P_{\omega}[\psi_{j_{0}}](w_{j_{0}-1})$ and $\tilde{C}$ be the smallest non-negative real number such that $\tilde{w}\in \mathcal{P}_{\tilde{C}}(X,\omega,\psi_{j_{0}})$, we conclude this case by the argument exposed in the previous step since $\tilde{C}\leq C_{j_{0}-1}$ by construction and $C_{j_{0}-1}\geq C_{j_{0}-2}$.\\
\textbf{Step 5 ($\mathbf{m\to m+1}$): case $\mathbf{|i_{0}-j_{0}|=1}$} Let assume $i_{0}=j_{0}-1$. Since $C_{j_{0}-1}\leq C_{j_{0}}<C_{j_{0}+1}$, as in Step $4$, we can substitute $w_{j_{0}}$ by $P_{\omega}[\psi_{j_{0}}](w_{j_{0}-1})$. Therefore, up to replace $i_{0}$ by $i_{0}+1$, we have $i_{0}=j_{0}$ that is the last case addressed in the final step.\newline
\textbf{Step 6 ($\mathbf{m\to m+1}$): case $\mathbf{i_{0}=j_{0}}$} Since $C_{0}>C_{1}>\cdots>C_{j_{0}-1}>C_{j_{0}}$, alternating several times Proposition \ref{prop:Proper}$.(iv)$ and the triangle inequality for $d$ on $\mathcal{E}^{1}(X,\omega,\psi_{i})$ for $i=0,\dots,j_{0}-1$ we get
\begin{gather*}
\tilde{d}_{\mathcal{A}}(w_{0},w_{1})+\cdots+\tilde{d}_{\mathcal{A}}(w_{j_{0}-1},w_{j_{0}})\geq\\
\geq \tilde{d}_{\mathcal{A}}(w_{0},w_{1})+\cdots+\tilde{d}_{\mathcal{A}}(w_{j_{0}-2},w_{j_{0}-1})+d\big(w_{j_{0}-1},P_{\omega}[\psi_{j_{0}-1}](\varphi_{j_{0}})\big)\geq\\
\geq  \tilde{d}_{\mathcal{A}}(w_{0},w_{1})+\cdots+\tilde{d}_{\mathcal{A}}(w_{j_{0}-3},w_{j_{0}-2})+d\big(P_{\omega}[\psi_{j_{0}-1}](\varphi_{j_{0}-2}),P_{\omega}[\psi_{j_{0}-1}](\varphi_{j_{0}})\big)+\\
+\sup_{a,b\in \mathcal{P}_{C_{j_{0}-2}}(X,\omega,\psi_{j_{0}-2})}\Big\{d(a,b)-d\big(P_{\omega}[\psi_{j_{0}-1}](a),P_{\omega}[\psi_{j_{0}-1}](b)\big)\Big\}\geq\\
\geq \tilde{d}_{\mathcal{A}}(w_{0},w_{1})+\cdots+\tilde{d}_{\mathcal{A}}(w_{j_{0}-3},w_{j_{0}-2})+d\big(w_{j_{0}-2},P_{\omega}[\psi_{j_{0}-2}](\varphi_{j_{0}})\big)\geq\\
\geq\dots\geq \tilde{d}_{\mathcal{A}}(w_{0},w_{1})+d\big(w_{1},P_{\omega}[\psi_{1}](\varphi_{j_{0}})\big)\geq d\big(w_{0},P_{\omega}[\psi_{0}](\varphi_{j_{0}})\big).
\end{gather*}
Proceeding in the same way, by symmetry, we also get
$$
\tilde{d}_{\mathcal{A}}(w_{j_{0}},w_{j_{0}+1})+\cdots+\tilde{d}_{\mathcal{A}}(w_{m+1},w_{m+2})\geq d\big(P_{\omega}[\psi_{0}](\varphi_{j_{0}}),w_{m+2}\big).
$$
Hence
$$
\tilde{d}_{\mathcal{A}}(w_{0},w_{1})+\cdots+\tilde{d}_{\mathcal{A}}(w_{m+1},w_{m+2})\geq d(w_{0},w_{m+2})=d(u,v),
$$
which concludes the proof.
\subsection{Gromov-Hausdorff types of convergences \& direct limits: proof of Theorems \ref{thmD} and \ref{thmE}.}
\label{ssec:Consequences}
In this section we assume $\mathcal{A}=\{\psi_{k}\}_{k\in\mathbbm{N}}\subset \mathcal{M}^{+}$ to be a totally ordered subset such that $\psi_{k+1}\preccurlyeq \psi_{k}$ for any $k\in\mathbbm{N}$. Moreover we suppose that $\psi_{k}\searrow \psi\in \mathcal{M}^{+}$.
\begin{defn}
Let $\mathcal{A}$ and $\psi\in \mathcal{M}^{+}$ as above. Then the elements of the family
$$
\mathcal{K}_{\mathcal{A}}:=\bigcup_{k\in\mathbbm{N}}\Big\{K\subset \mathcal{E}^{1}(X,\omega,\psi) \, \mathrm{compact} \, \mathrm{such}\, \mathrm{that} \, K\subset P_{\omega}[\psi](\tilde{K})\, \mathrm{for} \,\tilde{K}\subset \mathcal{E}^{1}(X,\omega,\psi_{k}) \, \mathrm{compact} \Big\}
$$
are called \emph{$\mathcal{A}$-compact sets}.
\end{defn}
We recall that for a couple of compact metric spaces $ (X,d_{X}),(Y,d_{Y}) $, the Gromov-Hausdorff distance between them is defined as
$$
d_{GH}(X,Y)=\inf\{d^{d}_{H}(X,Y) \, :\, d \, \mathrm{admissible} \,\mathrm{distance}\, \mathrm{on} \, X\sqcup Y \}
$$
where a distance $d$ on $X\sqcup Y$ is said to be admissible if $d_{|X\times X}=d_{X}$ and $d_{|Y\times Y}=d_{Y}$ and where $d^{d}_{H}$ indicates the Hausdorff distance on the closed sets of $(X\sqcup Y, d)$.\\
A sequence of compact metric spaces $(X_{n},d_{n})$ converges in the Gromov-Hausdorff sense to a compact metric space $(X,d)$ if $d_{GH}(X_{n},X)\to 0$. We will use the notation $(X_{n},d_{n})\xrightarrow{GH} (X,d)$ and we refer to \cite{BBI01} and to \cite{BH99} for this notion of convergence.
\begin{prop}
\label{prop:GH}
For any $\mathcal{A}$-compact set $K\subset \mathcal{K}_{\mathcal{A}}$ there exists a sequence of strongly compact sets $(K_{k},d)\subset \big(\mathcal{E}^{1}(X,\omega,\psi_{k}),d\big)$ for $k\gg 1$ big enough such that
$$
(K_{k},d)\xrightarrow{GH} (K,d).
$$
\end{prop}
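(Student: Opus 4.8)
The plan is to realize the approximating sets as downward projections of a single compact set and to control the Gromov--Hausdorff distance through the characterization of $d_{GH}$ by correspondences. First I would reduce to the case in which $K$ is exactly a projection. By definition of $\mathcal{A}$-compactness there are $k_{0}\in\mathbb{N}$ and a strongly compact set $\tilde{K}\subset\mathcal{E}^{1}(X,\omega,\psi_{k_{0}})$ with $K\subset P_{\omega}[\psi](\tilde{K})$. Since $\psi\preccurlyeq\psi_{k_{0}}$ the map $P_{\omega}[\psi](\cdot)\colon\mathcal{E}^{1}(X,\omega,\psi_{k_{0}})\to\mathcal{E}^{1}(X,\omega,\psi)$ is $1$-Lipschitz, hence continuous, by Proposition \ref{prop:Dist}; therefore $\tilde{K}':=\big(P_{\omega}[\psi]\big)^{-1}(K)\cap\tilde{K}$ is a closed subset of the compact set $\tilde{K}$, so it is strongly compact, and $P_{\omega}[\psi](\tilde{K}')=K$ because every element of $K$ already admits a preimage in $\tilde{K}$. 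Replacing $\tilde{K}$ by $\tilde{K}'$, I may and do assume $K=P_{\omega}[\psi](\tilde{K})$.

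Next I define the candidate approximants. For $k\geq k_{0}$ set $K_{k}:=P_{\omega}[\psi_{k}](\tilde{K})\subset\mathcal{E}^{1}(X,\omega,\psi_{k})$; these make sense by Lemma \ref{lem:Pro}.$(ii)$ (as $\psi_{k}\preccurlyeq\psi_{k_{0}}$) and are strongly compact by Proposition \ref{prop:UC} (or directly, since $P_{\omega}[\psi_{k}]$ is $1$-Lipschitz and $\tilde{K}$ is compact). To estimate $d_{GH}(K_{k},K)$ I would use the correspondence
$$
R_{k}:=\big\{\big(P_{\omega}[\psi_{k}](\varphi),P_{\omega}[\psi](\varphi)\big)\ :\ \varphi\in\tilde{K}\big\}\subset K_{k}\times K.
$$
This is genuinely a correspondence between $K_{k}$ and $K$: its first coordinates exhaust $K_{k}$ and its second coordinates exhaust $K$, since $\tilde{K}$ surjects onto both by construction. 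Recalling that $d_{GH}(Y,Z)\leq\tfrac{1}{2}\,\mathrm{dis}(R)$ for any correspondence $R$ between compact metric spaces (see \cite{BBI01}), it remains only to bound the distortion
$$
\mathrm{dis}(R_{k})=\sup_{\varphi_{1},\varphi_{2}\in\tilde{K}}\Big|d\big(P_{\omega}[\psi_{k}](\varphi_{1}),P_{\omega}[\psi_{k}](\varphi_{2})\big)-d\big(P_{\omega}[\psi](\varphi_{1}),P_{\omega}[\psi](\varphi_{2})\big)\Big|.
$$

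The crux of the argument is precisely this distortion bound, and it is exactly the content of Proposition \ref{prop:UC}: since $\psi_{k}\searrow\psi$ and $\tilde{K}\subset\mathcal{E}^{1}(X,\omega,\psi_{k_{0}})$ with $\psi_{k_{0}}\succcurlyeq\psi_{k}$ for all $k\geq k_{0}$, the functions $(\varphi_{1},\varphi_{2})\mapsto d\big(P_{\omega}[\psi_{k}](\varphi_{1}),P_{\omega}[\psi_{k}](\varphi_{2})\big)$ converge uniformly on $\tilde{K}\times\tilde{K}$ to $(\varphi_{1},\varphi_{2})\mapsto d\big(P_{\omega}[\psi](\varphi_{1}),P_{\omega}[\psi](\varphi_{2})\big)$. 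Hence $\mathrm{dis}(R_{k})\to0$, so that $d_{GH}(K_{k},K)\leq\tfrac{1}{2}\,\mathrm{dis}(R_{k})\to0$, i.e. $(K_{k},d)\xrightarrow{GH}(K,d)$. I expect the only genuine difficulty to be already packaged inside Proposition \ref{prop:UC}, whose proof combines the $1$-Lipschitz contraction of Proposition \ref{prop:Dist}, the pointwise convergence of $d$-distances supplied by Lemma \ref{lem:Key}, and Dini's theorem; once that uniform convergence is in hand, the passage to the Gromov--Hausdorff statement is a routine application of the correspondence formulation of $d_{GH}$.
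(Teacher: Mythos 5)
Your proof is correct and takes essentially the same route as the paper: you reduce to $K=P_{\omega}[\psi](\tilde{K})$ by intersecting $\tilde{K}$ with $P_{\omega}[\psi]^{-1}(K)$ (the paper's $K_{k_0}$), set $K_{k}:=P_{\omega}[\psi_{k}](\tilde{K})$, and your correspondence $R_{k}$ coincides, via Lemma \ref{lem:Pro}$.(i)$, with the paper's $\mathcal{R}_{k}=\{(u_{k},P_{\omega}[\psi](u_{k}))\}$. The conclusion through the distortion characterization of $d_{GH}$ from \cite{BBI01} together with the uniform convergence of Proposition \ref{prop:UC} is exactly the paper's argument, with the only (harmless) addition that you make the surjectivity of the correspondence and the reduction step explicit.
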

\begin{proof}
Let $k_{0}\in\mathbbm{N}$ such that $K\subset P_{\omega}[\psi](\tilde{K})$ for a compact set $\tilde{K}\subset \big(\mathcal{E}^{1}(X,\omega,\psi_{k_{0}}),d\big)$. Then we define
$$
K_{k_{0}}:=\tilde{K}\cap P_{\omega}[\psi]^{-1}(K),
$$
noting that it is a compact set in $\mathcal{E}^{1}(X,\omega,\psi_{k_{0}})$. Therefore we define for any $k\geq k_{0}$
$$
K_{k}:=P_{\omega}[\psi_{k}]\big(\tilde{K}\big)\cap P_{\omega}[\psi]^{-1}\big(K\big)=P_{\omega}[\psi_{k}]\big(K_{k_{0}}\big)
$$
and a correspondence $\mathcal{R}_{k}\subset K_{k}\times K$ as $(u_{k},u)\in \mathcal{R}_{k}$ if $u=P_{\omega}[\psi](u_{k})$. Thus to prove that $d_{GH}(K_{k},K)\to 0$ with respect to the $d$-distances it is enough to check that
$$
\mathrm{dis}\,\mathcal{R}_{k}:=\sup \big\{|d(u,v)-d(u_{k},v_{k})| \, :\, (u_{k},u),(v_{k},v)\in \mathcal{R}_{k}\big\}\to 0
$$
as $k\to \infty$ (see Theorem $7.3.25.$ in \cite{BBI01}). Hence Proposition \ref{prop:UC} concludes the proof.  
\end{proof}
For non-compact metric spaces there is a weaker notion of convergence than the Gromov-Hausdorff convergence, that is the \emph{pointed Gromov-Hausdorff} convergence. We recall that a sequence of pointed compact metric spaces $(K_{n},p_{n},d_{n})$ converges in the pointed Gromov-Hausdorff sense to $(K,p,d)$ if $d_{GH}\big((K_{n},p_{n}),(K,p)\big)\to 0$ as $n\to\infty$ where
$$
d_{GH}\big((K_{n},p_{n}),(K,p)\big):=\inf\big\{d^{d}_{H}(K_{n},K)+d(p_{n},p)\, :\, d \, \mathrm{admissible}\, \mathrm{metric}\, \mathrm{on} \, X\sqcup Y\big\}.
$$
Thus a sequence of non-compact pointed metric spaces $(X_{n},p_{n},d_{n})$ is said to converge in the pointed Gromov-Hausdorff sense to a non-compact pointed metric space $(X,p,d)$ if for any $r>0$
$$
d_{GH}\big((\overline{B_{r}(p_{n})},p_{n}),(\overline{B_{r}(p)},p)\big)\to 0
$$
as $n\to \infty$\footnote{This is actually not the right definition of point Gromov-Hausdorff convergence, but it is a characterization which holds when the sequence and the limit point are lenght spaces (\cite{BBI01}).}. We will use the notation $(X_{n},p_{n},d_{n})\xrightarrow{p-GH} (X,p,d)$.\\
If the pointed metric spaces are locally compact this convergence seems to be the most natural kind of convergence to look at. But if the pointed metric spaces are not locally compact, the pointed Gromov-Hausdorff convergence still seems a too strong kind of convergence. Thus we give the following general definition:
\begin{defn}
\label{defn:cp-GH}
A family of pointed metric spaces $(X_{n},p_{n},d_{n})$ converges in the \emph{compact pointed Gromov-Hausdorff convergence} to a pointed metric space $(X,p,d)$ if there exist a family of compact set $\{K_{j}\}_{j\in\mathbbm{N}}\subset X$ and, for any $n\in \mathbbm{N}$, a family of compact sets $\{K_{j,n}\}_{j\in\mathbbm{N}}\subset X_{n}$ such that
\begin{itemize}
\item[i)] $p_{n}\in K_{j,n}$ for any $n\in\mathbbm{N}$ and for any $j\in\mathbbm{N}$; \item[ii)] $p\in K_{j}$ for any $j\in\mathbbm{N}$;
\item[iii)] for any $n\in \mathbbm{N}$ fixed, $K_{j,n}\subset K_{j+1,n}$ for any $j\in\mathbbm{N}$ and $\bigcup_{j\in \mathbbm{N}}K_{j,n}$ is dense in $X_{n}$;
\item[iv)] $K_{j}\subset K_{j+1}$ for any $j\in\mathbbm{N}$ and $\bigcup_{j\in\mathbbm{N}}K_{j}$ is dense in $X$;
\item[v)] $d_{GH}\big((K_{j,n},p_{n}),(K_{j},p)\big)\to 0$.
\end{itemize}
We will use the notation $(X_{n},p_{n},d_{n})\xrightarrow{cp-GH} (X,p,d)$.
\end{defn}
We can now prove Theorem \ref{thmD}:
\begin{reptheorem}{thmD}
Let $\{\psi_{k}\}_{k\in\mathbbm{N}}\subset \mathcal{M}^{+}$ such that $\psi_{k}\searrow \psi\in\mathcal{M}^{+}$. Then
$$
\Big(\mathcal{E}^{1}(X,\omega,\psi_{k}),d\Big)\xrightarrow{cp-GH} \Big(\mathcal{E}^{1}(X,\omega,\psi),d\Big).
$$
\end{reptheorem}
\begin{proof}
For any $j\in\mathbbm{N}$ let $\mathcal{K}_{j}$ be the strongly compact set in $\mathcal{E}^{1}(X,\omega)$ containing all $\omega$-psh functions with bounded entropy by $j$ (see Theorem \ref{thm:Comp}). Thus, defining for any $j\in\mathbbm{N}$ and for any $k\in\mathbbm{N}$, $K_{j,k}:=P_{\omega}[\psi_{k}](\mathcal{K}_{j})$ and $K_{j}:=P_{\omega}[\psi](\mathcal{K}_{j})$, the theorem immediately follows from Lemma \ref{lem:ProDense} and Proposition \ref{prop:GH}.
\end{proof}

The maps $P_{k,j}:P_{\omega}[\psi_{j}](\cdot):\big(\mathcal{E}^{1}(X,\omega,\psi_{k}),d\big)\to\big(\mathcal{E}^{1}(X,\omega,\psi_{j}),d\big)$ for $k\leq j$ are morphisms in the category of metric spaces (see Lemma \ref{lem:Pro} and Proposition \ref{prop:Dist}). Moreover $\{P_{k,j}\}_{j\leq k, (k,j)\in\mathbbm{N}}$ produces a direct system again by Lemma \ref{lem:Pro}, and $\big\langle \big(\mathcal{E}^{1}(X,\omega,\psi),d\big), P_{k}\big\rangle$ is a \emph{target} of this direct system where $P_{k}:=P_{\omega}[\psi](\cdot):\big(\mathcal{E}^{1}(X,\omega,\psi_{k}),d\big)\to \big(\mathcal{E}^{1}(X,\omega,\psi),d\big)$.\\
We recall that a target $\big\langle(X,d_{X}),f_{X,n}\big\rangle$ of a direct system of metric spaces $\big\langle(X_{n},d_{n}), f_{n,m}\big\rangle$ is a metric space $(X,d_{X})$ with $1$-Lipschitz maps $f_{X,n}:(X_{n},d_{n})\to (X,d_{X})$ such that $f_{X,n}=f_{X,m}\circ f_{m,n}$ for any $n\leq m$.\\
Therefore since by the universal property the direct limit is the initial target, we immediately find out that the direct system $\big\langle \big(\mathcal{E}^{1}(X,\omega,\psi_{j}),P_{k,j}\big)$ admits a direct limit (recall that some direct systems in the category of metric spaces do not admit any not-trivial target like, for instance, the direct system $\big\langle(X_{n},d_{n}), f_{n,m}\big\rangle:=\big\langle(\mathbbm{R},\frac{1}{n}d_{eucl}), \mathrm{Id}\big\rangle$). We denote with $\mathfrak{m}-\lim_{\longrightarrow}$ the direct limit in the category of metric spaces.
\begin{reptheorem}{thmE}
There is an isometric embedding
$$
\mathfrak{m}-\lim_{\longrightarrow}\big\langle\big((\mathcal{E}^{1}(X,\omega,\psi_{i}),d), P_{i,j}\big)\big\rangle\hookrightarrow \big(\mathcal{E}^{1}(X,\omega,\psi),d\big)
$$
with dense image. More precisely the direct limit is isometric to $\Big(\bigcup_{k\in\mathbbm{N}}P_{\omega}[\psi]\big(\mathcal{E}^{1}(X,\omega,\psi_{k})\big),d\Big)$.
\end{reptheorem}
\begin{proof}
As a consequence of Lemma \ref{lem:ProDense} the metric subspace $T:=\bigcup_{k\in\mathbbm{N}}P_{\omega}[\psi]\big(\mathcal{E}^{1}(X,\omega,\psi_{k})\big)$ is dense in $\big(\mathcal{E}^{1}(X,\omega,\psi),d\big)$. Then, since as stated before $\langle (T,d),P_{k})$ is a target of the direct system considered, to conclude the proof it is enough to show that for any other target $\langle(Y,d_{Y}),P_{Y,k}\big\rangle$ there exists a $1-$Lipschitz map $P_{Y,T}:T\to Y$ such that $P_{Y,T}\circ P_{k}=P_{Y,k}$ for any $k\in\mathbbm{N}$.\\
Therefore, letting $\big\langle (Y,d_{Y}),P_{Y,k}\big\rangle$ a target, for any $u\in T$ we denote with $k_{u}\in\mathbbm{N}$ the minimum natural number $k$ such that $u\in P_{\omega}[\psi]\big(\mathcal{E}^{1}(X,\omega,\psi_{k})\big)$ and we fix a function $\varphi_{u}\in \mathcal{E}^{1}(X,\omega,\psi_{k_{u}})$ such that $P_{\omega}[\psi](\varphi_{u})=u$. Next we define $P_{Y,T}:T\to Y$ as
$$
P_{Y,T}(u):=P_{Y,k_{u}}(\varphi_{u}),
$$
i.e. it is defined so that $P_{Y,T}\circ P_{k}=P_{Y,k}$ for any $k\in\mathbbm{N}$. Note that the definition does not depend on representatives since $P_{Y,k_{1}}(\varphi_{1})=P_{Y,k_{2}}(\varphi_{2})$ for $\varphi_{1}\in\mathcal{E}^{1}(X,\omega,\psi_{k_{1}}),\varphi_{2}\in\mathcal{E}^{1}(X,\omega,\psi_{k_{2}})$ if $P_{k_{1}}(\varphi_{1})=P_{k_{2}}(\varphi_{2})$. Indeed
\begin{multline*}
d_{Y}\big(P_{Y,k_{1}}(\varphi_{1}),P_{Y,k_{2}}(\varphi_{2})\big)=d_{Y}\big(P_{Y,j}\circ P_{j,k_{1}}(\varphi_{1}),P_{Y,j}\circ P_{j,k_{2}}(\varphi_{2})\big)\leq\\
\leq d\big(P_{j,k_{1}}(\varphi_{1}),P_{j,k_{2}}(\varphi_{2})\big)\to d(P_{k_{1}}(\varphi_{1}),P_{k_{2}}(\varphi_{2}))=0
\end{multline*}
as $j\to \infty$ by Proposition \ref{prop:UC}.\\
To finish the proof it remains to check that $P_{Y,T}$ is $1$-Lipschitz. Fixed $u,v\in T$, we have for any $j\in\mathbbm{N}$ big enough
$$
d_{Y}\big(P_{Y,T}(u),P_{Y,T}(v)\big)=d_{Y}\big(P_{Y,j}\circ P_{j,k_{u}}(\varphi_{u}),P_{Y,j}\circ P_{j,k_{v}}(\varphi_{v})\big)\leq d(P_{j,k_{u}}(\varphi_{u}),P_{j,k_{v}}(\varphi_{v})),
$$
where $P_{k_{u}}(\varphi_{u})=u$, $P_{k_{v}}(\varphi_{v})=v$. Hence $d_{Y}\big(P_{Y,T}(u),P_{Y,T}(v)\big)\leq d(u,v)$ letting $j\to +\infty$.
\end{proof}


\begin{thebibliography}{99}
\bibitem{Aub84} T. Aubin, \textit{Reduction du cas positif de l'équation de Monge-Ampère sul les variétiés kählériennes compactes à la démostration d'une inéqualité}, J. Funct. Anal. 57 (1984), no. 2, 143-153;
\bibitem{BBEGZ16} R. Berman, S. Boucksom, P. Eyssidieux, V. Guedj, A. Zeriahi, \textit{Kähler-Einstein metrics and the Kähler-Ricci flow on log Fano varities}, J. Reine Angew. Math. 751 (2019), 27-89;
\bibitem{BEGZ10} S. Boucksom, P. Eyssidieux, V. Guedj, A. Zeriahi, \textit{Monge-Ampère equations in big cohomology classes}, Acta Math. 205 (2010), no. 2, 199-262;
\bibitem{BBGZ09} R. Berman, S. Boucksom, V. Guedj, A. Zeriahi, \textit{A variational approach to compex Monge-Ampère equations}, Publ. Math. Inst. Hautes Études Sci. 117 (2013), 179-245;
\bibitem{BB08} R. Berman, S. Boucksom, \textit{Growth of balls of holomorphic sections and energy at equilibrium}, Invent. Math. 181 (2010), no. 2, 337-394;
\bibitem{BBI01} D. Burago, Y. Burago, S. Ivanov, \textit{A course in Metric geometry}, Graduate Studies in Mathematics, 33, American Mathematical Society, Providence, RI, 2001, xiv+415 pp. ISBN: 0-8218-2129-6;
\bibitem{BBJ15} R. Berman, S. Boucksom, M. Jonsson, \textit{A variational approach to the Yau-Tian-Donaldson conjecture}, preprint, \href{https://arxiv.org/abs/1509.04561}{https://arxiv.org/abs/1509.04561};
\bibitem{BDL16} R. Berman, T. Darvas, C. H. Lu, \textit{Regularity of weak minimizers of the K-energy and applications to properness and K-stability}, preprint, \href{https://arxiv.org/abs/1602.03114}{https://arxiv.org/abs/1602.03114};
\bibitem{BH99} M. R. Bridson, A. Häfliger, \textit{Metric spaces of non-positive curvature}, Grundlehren der Mathematischen Wissenschaften, 319, Springer-Verlag, Berlin, 1999, xxii+643 pp. ISBN: 3-540-64324-9;
\bibitem{BT82} E. Bedford, B. A. Taylor, \textit{A new capacity for plurisubharmonic functions}, Acta Math. 149 (1982), no. 1-2, 1-40;
\bibitem{BT87} E. Bedford, B. A. Taylor, \textit{Fine topology, Šhilov boundary, and $(dd^{c})^{n}$}, J. Funct. Anal. 72 (1987), no. 2, 225-251.;
\bibitem{CC17} X. Chen, J. Cheng, \textit{On the constant scalar curvature Kähler metrics, a priori estimates}, preprint, \href{https://arxiv.org/abs/1712.06697}{https://arxiv.org/abs/1712.06697};
\bibitem{CC18a} X. Chen, J. Cheng, \textit{On the constant scalar curvature Kähler metrics, existence results}, preprint, \href{https://arxiv.org/abs/1801.00656}{https://arxiv.org/abs/1801.00656};
\bibitem{CC18b} X. Chen, J. Cheng, \textit{On the constant scalar curvature Kähler metrics, general automorphism group}, preprint, \href{https://arxiv.org/abs/1801.05907}{https://arxiv.org/abs/1801.05907};
\bibitem{Chen00} X. Chen, \textit{The space of Kähler metrics}, J. Differential Geom. 56 (2000), no. 2, 189-234;
\bibitem{Dar14} T. Darvas, \textit{The Mabuchi Geometry of Finite Energy Classes}, Adv. Math. 285 (2015). 182-219;
\bibitem{Dar15} T. Darvas, \textit{The Mabuchi completion of the space of Kähler potentials}, Amer. J. Math. 139 (2017), no. 5, 1275-1313;
\bibitem{DDNL17a} T. Darvas, E. Di Nezza, C. H. Lu, \textit{On the singularity type of full mass currents in big cohomology class}, Compos. Math. 154 (2018), no. 2, 380-409;
\bibitem{DDNL17b} T. Darvas, E. Di Nezza, C. H. Lu, \textit{Monotonicity of non-pluripolar products and complex Monge-Ampère equations with prescribed singularity}, Anal. PDE 11 (2018), no. 8, 2049-2087;
\bibitem{DDNL18a} T. Darvas, E. Di Nezza, C. H. Lu, \textit{$L^{1}$ metric geometry of big cohomology classes}, Ann. Inst. Fourier (Grenoble) 68 (2018), no. 7, 3053-3086;
\bibitem{DDNL18b} T. Darvas, E. Di Nezza, C. H. Lu, \textit{Log-concavity of volume and complex Monge-Ampère equations with prescribed singularity}, preprint, \href{https://arxiv.org/abs/1807.00276}{https://arxiv.org/abs/1807.00276};
\bibitem{DH17} T. Darvas, W. He, \textit{Geodesic rays and Kähler-Ricci trajectories on Fano manifolds}, Trans. Amer. Math. Soc. 369 (2017), no. 7, 5069-5085;
\bibitem{DR15} T. Darvas, Y. A. Rubinstein, \textit{Tian's properness conjecture and Finsler geometry of the space of Kähler metrics}, J. Amer. Math. Soc. 30 (2017), no. 2, 347-387;
\bibitem{Don99} S. K. Donaldson, \textit{Symmetric spaces, Kähler geometry and Hamiltonian dynamics}, Northern California Symplectic Geometry Seminar, 13–33, Amer. Math. Soc. Transl. Ser. 2, 196, Adv. Math. Sci., 45, Amer. Math. Soc., Providence, RI, 1999;
\bibitem{GZ05} V. Guedj, A. Zeriahi, \textit{Intrinsic capacities on compact Kähler manifolds}, J. Geom. Anal. 15 (2005), no. 4, 607-639;
\bibitem{GZ17} V. Guedj, A. Zeriahi, \textit{Degenerate complex Monge-Ampère equations}, EMS Tracts in Mathematics, 26. European Mathematical Society (EMS), Zürich, 2017, xxiv+472 pp. ISBN: 978-3-03719-167-5;
\bibitem{Lu20} C. H. Lu, \textit{Comparison of Monge-Ampère capacities}, preprint, \href{https://arxiv.org/abs/2005.04264}{https://arxiv.org/abs/2005.04264};
\bibitem{Mab86} T. Mabuchi, \textit{K-energy maps integrating Futaki invariants}, Tohoku Math. J. (2) 38 (1986), no. 4, 575-593;
\bibitem{Mc18} N. McCleerey, \textit{Envelopes with prescribed singularities}, J. Geom. Anal. 30 (2020), no. 4, 3716–3741;
\bibitem{RWN14} J. Ross, D. Witt Nyström, \textit{Analytic test configurations and geodesic rays}, J. Symplectic Geom. 12 (2014), no. 1, 125-169;
\bibitem{Sem92} S. Semmes, \textit{Complex Monge-Ampère and symplectic manifolds}, Amer. J. Math. 114 (1992), no. 3, 495-550.;
\bibitem{Tru20} A. Trusiani, \textit{The strong topology of $\omega$-plurisubharmonic functions}, to appear in Analysis \& PDE journal;
\bibitem{X19a} M. Xia, \textit{Integration by parts formula for non-pluripolar product}, preprint, \href{https://arxiv.org/abs/1907.06359}{https://arxiv.org/abs/1907.06359}.
\bibitem{X19b} M. Xia, \textit{Mabuchi geometry of big cohomology classes with prescribed singularities}, preprint, \href{https://arxiv.org/abs/1907.07234}{https://arxiv.org/abs/1907.07234};
\bibitem{WN17} D. Witt Nyström, \textit{Monotonicity of non-pluripolar Monge-Ampère masses}, Indiana Univ. Math. J. 68 (2019), no. 2, 579-591.
\end{thebibliography}
\end{document}